%% file: Bilinear_Kakeya_in_H_1.tex
\documentclass[11pt,letterpaper]{amsart}
\usepackage{amsmath, amssymb, amsthm}
\usepackage{geometry}
\usepackage{hyperref}
\usepackage{enumitem}
\usepackage{biblatex}
\input{preamble}

\title{\textbf{A Bilinear Kakeya Inequality in the Heisenberg Group}}
\author{Yannis Galanos}

\address{Yannis Galanos, School of Mathematics and Maxwell Institute for Mathematical Sciences, University of Edinburgh, James Clerk Maxwell Building, Peter Guthrie Tait Road, King's Buildings, Edinburgh EH9 3FD, United Kingdom}
\email{I.Galanos@sms.ed.ac.uk}
\date{}

\newtheorem{theorem}{Theorem}[section]
\newtheorem{lemma}[theorem]{Lemma}
\newtheorem{remark}[theorem]{Remark}
\newtheorem{definition}[theorem]{Definition}
\bibliography{references}

\numberwithin{equation}{section}
\numberwithin{proposition}{section}

\begin{document}
\maketitle

\begin{abstract}

We prove a bilinear Kakeya inequality in the first Heisenberg group and a sharp bilinear Kakeya estimate for Euclidean curved tubes in $\R^2$. By adapting an argument of F\"assler, Pinamonti and Wald \cite{FPW} involving Heisenberg projections, we show that the latter implies the former. We prove the estimate for curved tubes using a combination of techniques developed by Pramanik, Yang and Zahl \cite{PYZ}, Wolff \cite{Wolff} and Schlag \cite{Schlag}. We introduce a novel broadness hypothesis inspired by Zahl \cite{Zahl2}, which rules out bush-type configurations that break transversal structure. We argue that such a hypothesis is needed for proving the bilinear estimates we present. We also introduce necessary additional linear terms to the estimate to counteract Szemer\'edi--Trotter-type clustering phenomena.
\end{abstract}

\section{Introduction and Main Results}
\subsection{An Overview of the Problem}\label{sec:heisenberg}
The study of the Kakeya problem in the Heisenberg group $\bbH$ has recently gained traction due to numerous works that reveal connections with Euclidean curved Kakeya estimates, planar incidence geometry and restricted projection theory
\cite{FOP},\cite{FP},\cite{FPW},\cite{Liu}. Multilinear Kakeya and restriction estimates are powerful tools in Euclidean harmonic analysis, notably in the works of Tao, Vargas, Vega \cite{TVV}, Bennett, Carbery, Tao \cite{BCT} and Bourgain, Guth \cite{BG}. In \cite{FPW}, F\"assler, Pinamonti and Wald proved a maximal Kakeya inequality for metric tubes in $\mathbb{H}$, using Heisenberg projections to reduce the problem for Heisenberg tubes to a maximal function estimate for thickened parabolas proven by Pramanik, Yang and Zahl \cite{PYZ}. The latter is a generalisation of the circular maximal function estimate proven by Wolff \cite{Wolff}. The proofs of these planar maximal function estimates rely on a ``bilinear-to-linear" argument. It is therefore natural to ask whether bilinear analogues of those theorems exist. We propose such an (almost) bilinear inequality and prove it by reducing it to a tangency-counting estimate for parabolic arcs,  adapting techniques from \cite{PYZ} and \cite{Zahl2}. We also introduce a quantitative broadness hypothesis inspired by \cite{Zahl2}, which rules out ``narrow" configurations that are not compatible with bilinear structure. Using techniques similar to \cite{FPW}, we show that the planar bilinear estimate implies a Kakeya estimate for Heisenberg tubes, establishing a bilinear analogue of the maximal estimate in \cite{FPW}. By adapting classical geometric constructions (e.g. ``bushes" of tubes and ``clamshell-like" configurations of curves), we show that a broadness condition such as the one we introduce here (Definitions \ref{def:trans}, \ref{def:broad}) is needed in order to preserve non-trivial bilinear structure in this setting. Lastly, we show that the $L^p$ exponents and power of $\delta$ in the planar Kakeya estimate we obtain are sharp and discuss further issues concerning sharpness.
\subsection{Kakeya in the Heisenberg Group}
The Heisenberg group $\bbH$ is the prototypical example of a nilpotent Lie group (see \cite{Capogna} for more context). It is defined as $\mathbb{R}^3$ with the following group operation:
\[
(x, y, t) \cdot (x', y', t') = \left(x + x', y + y', t + t' + \frac{1}{2}(xy' - x'y)\right).
\]
The Kor\'anyi gauge $\|x\|_{\mathbb{H}} := \big((x^2 + y^2)^2 + 16t^2\big)^{1/4}$ defines a distance in $\bbH$, which induces a left-invariant metric on $\bbH$,
$$d(x,y):=\|y^{-1}\cdot x\|_{\mathbb{H}},$$
called the \textit{Kor\'anyi} metric. The topology induced on $\bbH$ by $d$ is the Euclidean topology. As a locally compact group, $\bbH$ admits a left Haar measure, which (up to multiplicative constants) is identified with the Lebesgue measure on $\R^3$. While the topological dimension of $\bbH$ is $3$, the \textit{homogeneous} dimension (i.e. the Hausdorff dimension with respect to the Kor\'anyi metric) is $4$.  The Heisenberg group also admits a homogeneous dilation:
$$\delta_\lambda(x,y,t):=(\lambda x,\lambda y, \lambda^2t),$$
with which it becomes a homogeneous Lie group. The Heisenberg Lie algebra $\mathfrak{h}$ is generated by the two left-invariant vector fields
$$X=\partial_x-\frac{y}{2}\partial_t,\ \ \ \ \ \ Y=\partial_y+\frac{x}{2}\partial_t,$$
and their commutator $T:=[X,Y]=\partial_t$. A useful concept in the study of $\bbH$ is that of a \textit{horizontal} curve. A smooth curve $\gamma:[0,1]\to\bbH$ is said to be horizontal if there exist smooth scalar functions $a,b$ such that
\begin{equation*}
    \dot{\gamma}(t)= a(t)X(\gamma(t))+b(t)Y(\gamma(t)).
\end{equation*}
It is known that a smooth curve in $\bbH$ has finite length (i.e. finite $1$-dimensional Hausdorff measure $\cH_d$) if and only if it is horizontal \cite{Capogna}. In the context of the Heisenberg Kakeya problem, we are only interested in directions that produce curves of finite length, as the implicit goal is to show that Kakeya sets are ``small". For this reason, we limit our considerations to horizontal lines. These are exhausted by left-translations of lines $\ell_{a,b}=\{(sa,sb,0): s\in\R\}$ in the $xy$-plane (hence the name horizontal). Motivated by the above reasoning, we define Heisenberg tubes as Korányi $\delta$-neighbourhoods of horizontal lines.
\begin{definition}
    Let  $\delta>0,\ e \in \mathbb{S}^1 \times \{0\}\equiv\bbS^1$ and $x\in \R^3$. We define the Heisenberg $\delta$-tube with horizontal direction $e$ centred at $x$ as 
    $$T_e^\delta(x)=\{x\cdot se\cdot z:\  s\in[-1/2,1/2],\ z\in B_d(0,\delta)\}$$
    Equivalently, we may define $T_e^\delta(x)$ as the Korányi $\delta$-neighbourhood of the horizontal line segment $\ell_{x,e}:=\{x\cdot se: s\in[-1/2,1/2]\}$, which we call the \textit{core} of $T_e^\delta(x)$. We will sometimes denote $T_e^\delta(x)$ by $T^\delta_{\ell_{x,e}}$. Notation will always be clear from context.
\end{definition}
 F\"assler, Pinamonti and Wald \cite{FPW} proved the following Kakeya inequality for Heisenberg tubes, which they also proved is equivalent to the sharp $L^3$ estimate for the Heisenberg Kakeya maximal function.
 \begin{theorem}[F\"assler--Pinamonti--Wald \cite{FPW}]\label{thm:FPW}
      Let $\varepsilon>0$. For every $\delta\in(0,1)$ and every collection $\mathcal{T}$ of Heisenberg $\delta$-tubes with $\delta^2$-separated directions in $\bbS^1$ we have
\begin{equation}\label{eq:kakeya}
    \int\limits_{\R^3}\Big( \sum_{T \in \mathcal{T}} \chi_T\Big)^{3/2}\lesssim_\varepsilon \delta^{3 - \varepsilon} \#\mathcal{T}.
\end{equation}
 \end{theorem}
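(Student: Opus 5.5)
The plan is to follow the route outlined in the introduction: use Heisenberg projections to reduce the estimate for Heisenberg tubes to the maximal/incidence estimate of Pramanik, Yang and Zahl \cite{PYZ} for $\delta$-neighbourhoods of parabolas in $\R^2$.

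\emph{Step 1: structure of a tube.} Fix $e=(\cos\theta,\sin\theta,0)$. Let $\pi_e$ be the vertical Heisenberg projection onto the vertical plane $\mathbb{W}_{e^\perp}$ spanned by $e^\perp$ and the $t$-axis, along the horizontal line through $e$. One checks by direct computation with the group law that $\pi_e$ maps the core $\ell_{x,e}$ to a single point. Moreover, up to a bi-Lipschitz (in fact polynomial, uniformly controlled) change of coordinates, $\pi_e(T_e^\delta(x))$ is comparable to a $\delta\times\delta^2$ box. Consequently $T_e^\delta(x)$ is comparable to a Euclidean set of dimensions $1\times\delta\times\delta^2$, but its $\delta^2$-thin direction twists along the core.

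\emph{Step 2: slicing.} Slice $\R^3$ by horizontal planes $\{t=t_0\}$, or better by vertical planes $\{x\cdot e_0^\perp=c\}$, chosen so that each slice of a tube is a curved planar tube. For the line $x\cdot se$, the $t$-coordinate is an affine function of $s$ plus $\tfrac12$(area term), so it is affine in $s$. Each tube's intersection with a plane of the form $\{y=y_0\}$ is therefore a $\delta$-neighbourhood of a point. This is not useful, so we instead use the FPW idea: parametrise the horizontal lines by $(\theta,a,b)$ and use the map $(x,y,t)\mapsto$ the family of lines through that point. A point $p=(x,y,t)$ lies on the line with parameters $(a,b,c)$ iff $t=ax^2+bx+c$ type relations hold, after restricting $\theta$ to a small arc and writing lines as graphs $y=ax+b$. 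Thus $\sum_T\chi_T(p)$ counts parabolas $\gamma_T:\ x\mapsto$ (a quadratic in $x$ determined by $T$) whose $\delta$-neighbourhood (in the $(y,t)$ fibre variables, with anisotropic scaling $\delta,\delta^2$) contains $p$. Fubini in $x$ then reduces $\int(\sum\chi_T)^{3/2}$ to a one-parameter family of planar integrals.

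\emph{Step 3: apply PYZ.} After rescaling $t$ by $\delta^{-1}$ so the $\delta^2$-thickness becomes $\delta$, each slice becomes a family of $\delta$-neighbourhoods of parabolic arcs in $\R^2$. The $\delta^2$-separation of directions in $\bbS^1$ translates, through the curvature parameter, into $\delta$-separation of the quadratic coefficients. This is exactly the hypothesis of the Pramanik--Yang--Zahl $L^{3/2}$ estimate $\|\sum \chi_{\gamma^\delta}\|_{3/2}^{3/2}\lesssim_\varepsilon \delta^{1-\varepsilon}\#$, with the right cinematic curvature condition verified for the family $t=ax^2+bx+c$. Undoing the rescalings and integrating over the slicing variable should yield $\delta^{3-\varepsilon}\#\mathcal T$, since the total volume of a tube is $\sim\delta^3$.

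\emph{Main obstacle.} The hardest part is Step 2/3: choosing coordinates in which slices of Heisenberg tubes are genuinely comparable to $\delta$-neighbourhoods of a cinematic-curvature family of curves satisfying the PYZ hypotheses, and correctly converting the $\delta^2$ direction separation into $\delta$-separation of curves. Here I would first try the explicit parametrisation of horizontal lines as graphs and compute the slice equations by hand, then track the power bookkeeping of $\delta$.
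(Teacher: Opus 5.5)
Your high-level strategy (Heisenberg projection, then the Pramanik--Yang--Zahl estimate) is the one the paper attributes to \cite{FPW} and reproduces for the bilinear case in Section \ref{sec:2}. However, the step that is supposed to produce parabolas does not work as you describe it, and that step is the heart of the argument. Horizontal lines are Euclidean straight lines: in the parametrisation $\{(x,ax+b,c-\frac b2x)\}$ the incidence relation is linear in $(a,b,c)$ and in $x$, not quadratic. A Heisenberg $\delta$-tube near the origin also lies in a Euclidean $C\delta$-neighbourhood of its core. So no slicing of $\R^3$ by planes, and no Fubini in $x$, turns a tube into a curved planar tube: every planar slice of $T$ lies in the $C\delta$-neighbourhood of a point or of a straight segment. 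Parabolas appear only as \emph{images} under the nonlinear projection $\pi_\bbW(x,y,t)=\big(\frac{x+y}{2},\,t+\frac14(x^2-y^2)\big)$ along the left cosets $w\cdot\bbL$ of a horizontal line $\bbL$ transverse to the tube directions. Projecting along the tube's own direction, as in your Step 1, collapses the tube to a $\delta\times\delta^2$ box and does not help. The missing ingredients are:
\begin{enumerate}
\item the change of variables $\int_{\R^3}F=\int_\bbW\int_\bbL F(w\cdot l)\,dl\,dw$;
\item the fact that $\pi_\bbW(T)$ lies in the $c\delta^2$-neighbourhood of the graph of a quadratic $f_T$ with leading coefficient $\kappa(e)=\frac{a-b}{a+b}$ (Lemma \ref{le:proj});
\item the fibre bound $\mathcal{H}_d^1(T\cap\pi_\bbW^{-1}(w))\lesssim\delta\,\chi_{\pi_\bbW(T)}(w)$ (Lemma \ref{le:haus}).
\end{enumerate}
H\"older on each fibre then gives
\[
\int_\bbL\Big(\sum_{T}\chi_T(w\cdot l)\Big)^{3/2}dl\lesssim\delta\Big(\sum_{T}\chi_{\pi_\bbW(T)}(w)\Big)^{3/2},
\]
which is where one factor of $\delta$ comes from. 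You also need the routine localisation to unit Kor\'anyi balls, since Lemma \ref{le:proj} is local. You also need to split $\bbS^1$ into $O(1)$ arcs, each with its own transverse $\bbL$, and recombine them by the triangle inequality in $L^{3/2}$.

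Your scale bookkeeping in Step 3 is also wrong. Since $\kappa$ is bi-Lipschitz on directions away from $\bbL$, $\delta^2$-separated directions give $\delta^2$-separated leading coefficients, not $\delta$-separated ones. The projected sets are $\delta^2$-neighbourhoods of unit parabolic arcs with bounded coefficients. So you should apply Theorem \ref{thm:pyz} directly with $\delta^2$ in place of $\delta$, which gives $\int_\bbW\big(\sum_T\chi_{N_{c\delta^2}(\mathrm{graph}\, f_T)}\big)^{3/2}\lesssim_\varepsilon\delta^{2-C\varepsilon}\#\mathcal{T}$; the fibre factor $\delta$ then yields $\delta^{3-C\varepsilon}\#\mathcal{T}$. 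Your rescaling $t\mapsto t/\delta$ does turn thickness $\delta^2$ into $\delta$ and coefficient separation $\delta^2$ into $\delta$. But it does so by multiplying every coefficient by $\delta^{-1}$, so the resulting curves have curvature $\sim\delta^{-1}$ and coefficients in a ball of radius $\sim\delta^{-1}$. They are then not a bounded family with $Z_\delta\subseteq B(0,1)$, and Theorem \ref{thm:pyz} does not apply to them. Finally, you must check the Frostman hypothesis rather than assert it. It holds because distinct tubes have $\delta^2$-separated values of $\kappa(e(T))$, which is the leading coordinate of the coefficient triple of $f_T$. Hence every ball of radius $r\geq\delta^2$ in coefficient space contains $\lesssim r/\delta^2$ triples.
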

The proof of Theorem \ref{thm:FPW} is based on the fact that Heisenberg $\delta$-tubes map into $\delta^2$-neighbourhoods of parabolas via Heisenberg projections. This reduces (\ref{eq:kakeya}) to a Kakeya estimate for thickened parabolic arcs in the plane. It is also shown by the authors of \cite{FPW} that the latter is a special case of Proposition $2.1$ in \cite{PYZ}:
\begin{theorem}[Pramanik--Yang--Zahl \cite{PYZ}]\label{thm:pyz}
    Let $\varepsilon>0$. Let $I$ be a compact interval and let $\gamma: I\to \bbS^2$ be a $C^2$ curve satisfying $\rm{span}\{\gamma(s),\gamma'(s),\gamma''(s)\}=\R^3$, for all $s\in I$. Then there exists $\delta_0$ depending only on $\varepsilon,\gamma$ such that the following holds for all $\delta\in(0,\delta_0]:$
    Let $Z_\delta\subseteq B(0,1)\subseteq\R^3$ be a $\delta$-separated set satisfying a Frostman-type non-concentration condition: 
    $$\#\big(Z_\delta\cap B\big)\leq\delta^{-\varepsilon}(r/\delta), \mbox{ for all balls}\ B\subseteq\R^3 \ \mbox{ of radius }\ r\geq\delta.$$
    Then
    \begin{gather}
        \int_{[0,1]^2}\Big(\sum_{z\in Z_\delta} \chi_{\Gamma_z^\delta}\Big)^{3/2}\leq \delta^{1-C\varepsilon}\# Z_\delta,
    \end{gather}
    where $C>0$ is an absolute constant and $\Gamma_z^\delta$ is the Euclidean $\delta$-neighbourhood of the graph of $\Gamma_z=\{(s,\langle\gamma(s), z\rangle):\ s\in I\}.$
\end{theorem}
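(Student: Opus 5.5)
Theorem \ref{thm:pyz} is quoted from \cite{PYZ}, so the plan is to follow the Wolff-type scheme of \cite{Wolff} and \cite{Schlag} for the circular maximal function, adapted to the curve family $\Gamma_z$. First I normalise the geometry. The nondegeneracy condition $\mathrm{span}\{\gamma,\gamma',\gamma''\}=\R^3$ says that the map $z\mapsto \Gamma_z$ is a $3$-parameter family of curves with \emph{cinematic curvature}. Concretely, for $z\neq z'$ the difference $f_{z,z'}(s)=\langle\gamma(s),z-z'\rangle$ has at most two zeros (counted with multiplicity) on short intervals. Moreover, $|z-z'|$ is comparable to $\max_s\big(|f|+|f'|+|f''|\big)$. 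This gives the standard dictionary between the pair $(z,z')$ and a ``distance'' $\Delta=|z-z'|$ together with a ``tangency parameter'' $\inf_s(|f_{z,z'}(s)|+|f'_{z,z'}(s)|)$. Using this, I will compute the measure of $\Gamma_z^\delta\cap\Gamma_{z'}^\delta$: it is at most $\delta^2/\sqrt{(\delta+\Delta)(\delta+\Delta')}$ up to constants, exactly as for circles. Here $\Delta'$ denotes the tangency parameter.

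Next I expand the $L^{3/2}$ norm. By dyadic pigeonholing on the multiplicity $\mu$ and on the scale of $\Delta$, it suffices to bound the measure of the set where roughly $\mu$ curves overlap. I then estimate this by counting pairs, and more importantly triples, of curves that are pairwise $\sim t$-tangent at a common point. The bilinear-to-linear reduction of \cite{Wolff} isolates a \emph{transverse} piece: for most points of high multiplicity, one finds two subfamilies whose arcs through the point are angle-separated. There the estimate follows from the pairwise intersection bound and the Frostman condition. The Frostman condition caps $\#(Z_\delta\cap B(z,r))$ by $\delta^{-\varepsilon}r/\delta$, which replaces the direction-separation hypothesis of the circular case and is what produces the exponent $\delta^{1-C\varepsilon}$.

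The key step, and the main obstacle, is the tangency-counting lemma. For $\delta$-separated $\mathcal W,\mathcal B\subseteq Z_\delta$, one must bound the number of pairs $(w,b)$ with $\Gamma_w,\Gamma_b$ $\delta$-tangent and $|w-b|\sim t$ by roughly $\big((\#\mathcal W\#\mathcal B)^{3/4}+\#\mathcal W+\#\mathcal B\big)\delta^{-\varepsilon}$, after rescaling. For circles this is the Clarkson--Edelsbrunner--Guibas--Sharir--Welzl cell decomposition in Wolff's approach. For general cinematic families, \cite{PYZ} handle it by combining the two-zero property with a polynomial partitioning or cutting argument in the $3$-dimensional parameter space. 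That argument bounds incidences between points and the $2$-dimensional ``tangency surfaces'' $\{z:\Gamma_z\text{ tangent to }\Gamma_w\}$, and it needs an induction on scales to handle the $\delta$-thickening. I would first try to obtain it through the Zahl/PYZ approach: lift each curve to its $1$-jet curve in $\R^3$, turning tangencies into near-intersections of these lifts. I would then apply a Guth--Katz-type partitioning to the lifts, using the Frostman bound to control the cell-wall contributions.

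Finally, I sum the transverse estimate and the tangency estimate over dyadic scales $t$ and multiplicities $\mu$. This costs only $\log(1/\delta)$ factors, which are absorbed into $\delta^{-C\varepsilon}$. The choice of $\delta_0$ ensures that the local two-zero and curvature constants are uniform for all $\delta\le\delta_0$.
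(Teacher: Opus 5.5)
The paper contains no proof of this statement. It is imported from \cite{PYZ} (Proposition~2.1 there), and the paper only notes, following \cite{FPW}, that the parabola case it needs is a special case. Your outline therefore has to stand on its own, and it has a genuine gap at the step you yourself call the main obstacle.

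First, the tangency-counting lemma as you formulate it is false. Fix $s_0\in I$ and put $\beta=\gamma(s_0)\times\gamma'(s_0)$. Take $\sim t/\delta$ points with spacing $\delta$ on a segment of length $t$ in direction $\beta$, and let $\mathcal W$ and $\mathcal B$ be the points in the first and last third. This set is $\delta$-separated and satisfies the Frostman condition. Every $w-b$ is parallel to $\beta$, so $\Gamma_w$ and $\Gamma_b$ are exactly tangent at $s_0$ with $|w-b|\sim t$, for all $\sim(t/\delta)^2$ pairs. That exceeds $\delta^{-\varepsilon}(t/\delta)^{3/2}$ as soon as $t\gg\delta^{1-2\varepsilon}$. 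The theorem itself is not threatened: this pencil has $L^{3/2}$ integral $\lesssim t\log(1/\delta)\sim\log(1/\delta)\,\delta\#Z_\delta$.

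The correct combinatorial input is Wolff's bound for pairwise incomparable $(\mu,\nu)$-rich $(\delta,t)$-rectangles, which is Lemma~1.4 of \cite{Wolff} and appears as Lemma~\ref{le:zahl} in this paper. The Frostman hypothesis enters by capping the richness. The parameters of curves tangent to a $(\delta,t)$-rectangle lie in a $\delta\times\sqrt{\delta t}\times t$ box, so $\mu,\nu\lesssim\delta^{-\varepsilon}t/\delta$. Your pigeonholing has to be organised around rich rectangles and this cap, not around tangent pairs.

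Second, even correctly formulated, the lemma is only promised, not proved. The route you suggest also does not reach the $C^2$ generality of the statement. A Guth--Katz partitioning of the lifts $s\mapsto(s,\langle\gamma(s),z\rangle,\langle\gamma'(s),z\rangle)$ needs algebraic structure. In particular, you need a B\'ezout-type bound on how many cells of $\R^3\setminus Z(P)$ a lifted curve enters. When $\gamma$ is only $C^2$, these lifts are non-algebraic $C^1$ curves and no such bound is available. The Frostman condition is a spacing condition on the parameters $z$ and does not substitute for it.

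Moreover, the lemma concerns $\delta$-near tangencies between incomparable rectangles. An exact intersection count for the lifts would still have to be converted into a discretised statement, and that conversion is where Wolff's argument does its real work; your ``induction on scales'' leaves it unspecified. For the quadratics this paper actually needs, the curves are algebraic and the rectangle bound is available from \cite{Zahl} and \cite{Wolff}. For general $C^2$ curves you would have to take the estimate from \cite{PYZ}.

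Without that lemma, the rest of your outline does not yet constitute a proof. Those remaining steps are standard: the two-zero property, the overlap bound $\delta^2/\sqrt{(\delta+\Delta)(\delta+\Delta')}$, and dyadic pigeonholing over scales and multiplicities.
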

In similar fashion to \cite{BCT}, one can attempt to formulate a multilinear version of (\ref{eq:kakeya}), where one considers multiple so-called \textit{transversal} collections of tubes. The notion of transversality for Heisenberg tubes is slightly different from the one in \cite{BCT}. Here, the set of allowed (horizontal) directions is identified with $\bbS^1$, which means that there can be at most two linearly independent directions. Therefore, in order to have a meaningful notion of transversality, we may only consider bilinear estimates. In contrast to \cite{BCT}, where the (bilinear) version of Theorem $1.15$ (in \cite{BCT}) in $\R^2$ is trivial (the numerology gives optimal $L^p$ exponent equal to $1$), the natural exponent for the bilinear theorem in the Heisenberg group is $(3/2)/2=3/4$, making such a theorem non-trivial.\\\\
The value of such a bilinear estimate is not limited to the role of a ``stepping stone" towards the analogous (already established in \cite{FPW}) linear theorem. Multilinear inequalities in general have had profound implications in harmonic analysis, beyond the scope of facilitating progress towards proving their linear counterparts. This has been achieved through techniques such as the so-called \textit{broad-narrow} method utilised in \cite{BG} to prove a trilinear curved Kakeya estimate in $\R^4$. Below in this subsection, we argue that such ``broadness" considerations will be crucial in determining the correct formulation of the main bilinear estimates in this paper and subsequently proving them. We also remark that such obstacles are not present in works that deal with linear estimates \cite{FPW},\cite{Wolff},\cite{Zahl}. The reason for this fundamental difference between the present work and previous ones will be explained in Subsection \ref{subsec:broadness}.
\begin{definition}\label{def:trans}
    Let $c\geq0$ be small constant, let $e_1,e_2$ be the standard basis vectors in $\R^3$ parallel to the $x$ and $y$ axes respectively and let $\delta\in(0,1)$. We say that the pair of families of Heisenberg $\delta$-tubes $(\cT_1,\cT_2)$ is \textit{transversal} if for any $T_j\in\mathcal{T}_j$, $j=1,2$, the horizontal direction $e(T_j)$ of $T_j$ satisfies $|e(T_j)-e_j|\leq c$.
\end{definition}
 A naive bilinear conjecture motivated by (\ref{eq:kakeya}) and \cite{BCT} is the following. 
 \begin{conjecture}\label{conj:naive}
     Let $\varepsilon>0$. For every $\delta\in(0,1)$ and every pair of transversal families of Heisenberg $\delta$-tubes  $(\cT_1,\cT_2)$ we have 
 \end{conjecture}
\begin{equation}\label{eq:naive}
    \int\limits_{\R^3}\Big(\sum\limits_{T_1\in\cT_1}\chi_{T_1}\sum\limits_{T_2\in\cT_2}\chi_{T_2}\Big)^{3/4}\lesssim\delta^{4-\varepsilon}\#\cT_1^{3/4}\#\cT_2^{3/4}.
\end{equation}
To convince the reader that this is a sensible proto-conjecture, we remark that the powers of $\delta$ (up to $\varepsilon$-loss) and $\#\cT_1,\#\cT_2$ on the right-hand side of \eqref{eq:naive} are the best one can hope to obtain, given the left-hand side of \eqref{eq:naive}. Indeed, taking $\cT_1$ and $\cT_2$ to be singletons shows that the power of $\delta$ is maximal (since the intersection of two transversal tubes is $\sim\delta^4$, while considering repeated tubes in $\cT_1$ and $\cT_2$ shows that the powers of $\#\cT_1,\#\cT_2$ must be at least as large as the exponent of the integrand of the left-hand side. Even so, it was observed by Katrin F\"assler (personal communication) that (\ref{eq:naive}) is false, even when $c=0$. The following example is inspired by F\"assler's observation:
\begin{example}\label{ex:bush}  
Fix $\delta\in(0,1)$. Let $\Omega\subset\bbS^1$ be an arc of Euclidean length $\delta^{3/2}$ centred at $e_1$ and let $\Omega_\delta$ be a maximal $\delta^2$-separated subset of $\Omega$ that includes $e_1$. For each $\omega\in\Omega_\delta$ we define $T_{1,\omega}=T_{\omega}^\delta(0)$. Let $I=\{se_1: s\in[-\delta^{3/4},\delta^{3/4}]\}$ and $P$ be a maximal $2\delta$-separated subset of $I$. For each $p\in P$ let $T_{2,p}=T_{e_2}^\delta(p)$. The families of Heisenberg $\delta$-tubes $\cT_1=\{T_{1,\omega}\}_{\omega\in\Omega_\delta}$ and $\cT_2=\{T_{2,p}\}_{p\in P}$ satisfy the following properties:
\begin{itemize}
    \item The tubes $T_{2,p}$ are pairwise disjoint and each one intersects every $T_{1,\omega}$ inside a region of volume $\sim\delta^4$,
    \item $\#\cT_2=\# P\sim\delta^{-1/4}$.
\end{itemize}
Thus for small $\varepsilon>0$,
$$\int\limits_{\R^3}\Big(\sum\limits_{T_1\in\cT_1}\chi_{T_1}\sum\limits_{T_2\in\cT_2}\chi_{T_2}\Big)^{3/4}\sim\delta^4\#\cT_1^{3/4}\#\cT_2\gg \delta^{4-\varepsilon}\#\cT_1^{3/4}\#\cT_2^{3/4}.$$
\end{example}
 By considering the symmetric example where the roles of $\cT_1$ and $\cT_2$ are swapped, we may also conclude that the left-hand side of (\ref{eq:naive}) can be $\gtrsim\delta^4\#\cT_1\#\cT_2^{3/4}$. Therefore, in order to get an estimate with better exponents on $\#\cT_1,\#\cT_2$, it is necessary to impose some additional hypotheses. Heuristically, the family $\cT_1$ in Example \ref{ex:bush} is a ``narrow bush", in the sense all horizontal directions concentrate in the small arc $\Omega_\delta$. In order to have non-trivial bilinear structure, one needs to impose a ``broadness" hypothesis that forbids too many directions from concentrating in small arcs (in proportion to the arc length) and thus ruling out narrow ``bushes". This intuition leads us to the following definition:
 \begin{definition}[Broadness Condition for Horizontal Directions]\label{def:bush}
     Let $C>0$ be a constant, $\delta,\alpha>0$ and let $\cL$ be a family of horizontal line segments of unit length. For each $\ell\in\cL$, let $e(\ell)\in\bbS^1$ denote the horizontal direction of $\ell$. For $\sigma\in[\delta,1]$, denote the Kor\'anyi $\sigma$-neighbourhood of $\ell\in\cL$ by $T_\ell^\sigma$. We say that $\cL$ is $(\delta,\alpha)$-broad if for every $z\in\bbH^1,\sigma\in[\delta,1]$ and every arc $\Omega\subset\bbS^1$, we have
     \begin{gather}
         \#\{\ell\in\cL:\ T_\ell^\sigma\cap B_d(z,C\sigma)\neq\emptyset,\ e(\ell)\in\Omega\}\lesssim1+|\Omega|^\alpha\#\{\ell\in\cL:\ T_\ell^\sigma\cap B_d(z,C\sigma)\neq\emptyset\}.
     \end{gather}
 \end{definition}
 In addition to bushes, there is another class of examples that we need to take account of. The reader may verify that by only keeping $T_{1,e_1}$ in $\cT_1$ from Example \ref{ex:bush}, the left-hand side of (\ref{eq:naive}) is comparable to $\delta^4\#\cT_2$. This is similar in spirit to the hairbrush example by Wolff \cite{Wolff2}, and does not fall into the category of bush-like examples. Therefore, the right-hand side of any non-trivial estimate must look like  $\delta^4\Big(\#\mathcal{T}_1^{3/4} \#\mathcal{T}_2^{3/4} + \#\cT_1+\#\cT_2\Big)$, which resembles the right-hand side of several combinatorial estimates, such as the Szemer\'edi-Trotter point-line incidence estimate and more importantly in the context of this work, Wolff's bilinear circle-tangency rectangle bound (Lemma $1.4$ in \cite{Wolff}). This leads us to the statement of the main result.
\begin{theorem}\label{thm:main}
Let $\varepsilon,\delta\in(0,1)$ be sufficiently small and let $\alpha\in[\frac{1}{2},1]$. Let $\mathcal{T}_1$, $\mathcal{T}_2$ be finite collections of Heisenberg $\delta$-tubes of which the associated collections of core lines $\cL_1,\cL_2$ are $(\delta,\alpha)$-broad and such that the horizontal direction $e(\ell_j)$ of each $\ell_j\in\mathcal{L}_j$ satisfies $|e(\ell_j) - e_j|\leq c$ for some small absolute constant $c\in(0,\frac{\sqrt{2}-1}{2})$. Then we have
\begin{equation}\label{eq:main}
\int_{\mathbb{R}^3} \Big( \sum_{T_1 \in \mathcal{T}_1} \chi_{T_1} \Big)^{3/4} \Big( \sum_{T_2 \in \mathcal{T}_2} \chi_{T_2} \Big)^{3/4} \lesssim_\varepsilon  \delta^{4 - \varepsilon} \left( \#\mathcal{T}_1^{3/4} \#\mathcal{T}_2^{3/4} + \#\mathcal{T}_1 + \#\mathcal{T}_2 \right).
\end{equation}
\end{theorem}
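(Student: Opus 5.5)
The plan is to follow the strategy announced in the introduction: reduce \eqref{eq:main} to a bilinear estimate for thickened parabolic arcs in the plane via Heisenberg projections, as in \cite{FPW}, and then prove the planar estimate by counting tangencies between parabolic arcs.

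\textbf{Step 1: projection.} For a horizontal direction $\theta\in\bbS^1$, consider the vertical Heisenberg projection onto the plane $\{(s\theta^\perp... )\}$, i.e. the map sending $(x,y,t)$ to (coordinate along a fixed transversal direction, $t$ corrected by the $\frac12(xy'-x'y)$ term). As in \cite{FPW}, a Heisenberg $\delta$-tube whose core is a horizontal segment maps under such a projection into (a subset of) the Euclidean $\delta^2$-neighbourhood $\Gamma^{\delta^2}_z$ of a parabolic arc $\Gamma_z$. The parameter $z\in\R^3$ is determined by the core line, and distinct $\delta$-separated lines give $\delta^2$-separated (in the appropriate anisotropic sense) parameters.

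Fibres of the projection are horizontal lines. I would integrate along fibres of length $\sim1$ and use Fubini: the $\delta^4$ volume of an intersection of two transversal tubes corresponds to $\delta\cdot(\delta^2\cdot\delta)$ in the fibred coordinates. This should reduce \eqref{eq:main} (with $\delta$ replaced by $\delta^2$ in the planar problem) to
\[
\int_{\R^2}\Big(\sum_{z\in Z_1}\chi_{\Gamma^{\delta}_{z}}\Big)^{3/4}\Big(\sum_{z\in Z_2}\chi_{\Gamma^{\delta}_{z}}\Big)^{3/4}\lesssim_\varepsilon\delta^{2-\varepsilon}\big(\#Z_1^{3/4}\#Z_2^{3/4}+\#Z_1+\#Z_2\big).
\]
The transversality $|e(\ell_j)-e_j|\le c$ is used to choose a single projection in which family $j$ gives curves with slopes in two separated ranges. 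This makes the two families of arcs transversal, with intersection area $\sim\delta^2$.

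A subtlety is that the fibre direction must be transversal to both families. I would use the condition $c<\frac{\sqrt2-1}{2}$ to pick the diagonal direction, which stays at angle $\gtrsim1$ from both $e_1$ and $e_2$. Broadness of $\cL_j$ transfers to a non-concentration condition on $Z_j$: curves through a $C\sigma$-ball whose $\sigma$-tangency directions lie in an arc $\Omega$ number at most $1+|\Omega|^\alpha$ times the total.

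\textbf{Step 2: planar estimate.} Dyadically pigeonhole the multiplicities: $\mu_1\sim\sum\chi_{\Gamma_{z_1}}$ and $\mu_2$ on a set $E$. The left side is then $\approx|E|\mu_1^{3/4}\mu_2^{3/4}$. Decompose $E$ into pieces where the arcs through a point are, or are not, mutually transversal within each family, following Wolff \cite{Wolff} and Schlag \cite{Schlag}.

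The key input is a bilinear tangency-rectangle count in the style of Wolff's Lemma 1.4 and the PYZ version \cite{PYZ}. Pairs $(z_1,z_2)$ that are $\rho$-tangent (with $\rho$ between $\delta$ and $1$) form $(\delta,\rho)$-rectangles, and the number of such rectangles that are $\mu$-rich for both families is
\[
\lesssim\delta^{-\varepsilon}\big((\#Z_1\#Z_2)^{3/4}\mu^{-3/2}+\#Z_1\mu^{-1}+\#Z_2\mu^{-1}\big).
\]
This count comes from the cell decomposition and the cinematic-curve argument of \cite{PYZ}, run bilinearly as in \cite{Zahl2}. Because the two families have transversal slopes, genuine intersections occur at angle $\sim1$. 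Tangency within a family is handled by the linear theory, and broadness prevents one family from collapsing into a bush at a point. Summing over $\rho$ dyadically and using Cauchy--Schwarz in the usual bilinear-to-linear manner, as in \cite{Schlag}, then gives the planar estimate. Multiplying back by the fibre length converts the bound into \eqref{eq:main}.

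\textbf{Main obstacle.} The hardest step is the multiplicity bound on $E$ in the regime where, at a point, the $\mu_1$ arcs of family $1$ are nearly tangent to each other (a planar ``bush''). Here the transversality between families alone gives nothing. I would try to use the $(\delta,\alpha)$-broadness with $\alpha\ge\frac12$ to show that at most a $\sigma^{\alpha}$-fraction of the arcs through a $\sigma$-ball are mutually $\sigma$-tangent. Summed over scales $\sigma$, this should lose only $\delta^{-\varepsilon}$, reducing to the transversal case where the rectangle count applies. The linear terms $\#Z_1+\#Z_2$ absorb the hairbrush-type configurations where one multiplicity is $O(1)$.
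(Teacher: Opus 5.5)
\textbf{There is a genuine gap in Step~1, and it misdirects the rest of the plan.}

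Transversality of horizontal directions does \emph{not} make the projected arcs transversal. Take a core through $p=(x,y,t)$ with direction $e=(a,b,0)$. Its image under $\pi_{\bbW}$ is the graph of
\[
f_T(\theta)=\kappa(e)(\theta-\theta_0(p))^2+m(p)(\theta-\theta_0(p))+v(p),
\]
where $\kappa(e)=\frac{a-b}{a+b}$, $m(p)=x-y$ and $v(p)=t+\frac14(x^2-y^2)$. The direction enters only through the curvature. The value and slope at $\theta_0(p)$ depend only on $p$.

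Consequently, two horizontal lines through a common point project to parabolas that are \emph{tangent} at $\pi_{\bbW}(p)$. The hypothesis $|e(\ell_j)-e_j|\le c$ only gives $\kappa\approx 1$ for one family and $\kappa\approx -1$ for the other. That is a $1$-bipartite pair in $\tau$: separated second derivatives, not separated slopes.

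In particular, your reduced planar inequality is false. The cores through $0$ in directions $e_1,e_2$ project to $\theta^2$ and $-\theta^2$. In the normalisation of your display, their $\delta$-neighbourhoods meet in area $\sim\delta^{3/2}\gg\delta^{2-\varepsilon}$. Your own bookkeeping $\delta\cdot(\delta^2\cdot\delta)$ is actually the tangency numerology (a $\delta^2\times\delta$ rectangle), which contradicts ``intersection area $\sim\delta^2$''.

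The correct target is Theorem~\ref{thm:kakeya} with $\rho=1$ at thickness $\delta^2$, giving $\delta^{3-\varepsilon}$. This yields $\delta^{4-\varepsilon}$ once exactly one factor $\delta$ is gained from the fibres. That gain needs more than Fubini over fibres of length $\sim 1$. The paper does three things:
\begin{itemize}
\item it localises to a unit Kor\'anyi ball, which Lemma~\ref{le:proj} requires;
\item it applies Cauchy--Schwarz on each fibre $w\cdot\bbL$;
\item it bounds $\int_{\bbL}(\sum_{T_j}\chi_{T_j}(w\cdot l))^{3/2}\,dl\le(\sum_{T_j}\chi_{\Gamma_{T_j}^{\delta^2}}(w))^{1/2}\sum_{T_j}\mathcal{H}^1_d(T_j\cap\pi_{\bbW}^{-1}(w))$ using Lemma~\ref{le:haus}.
\end{itemize}

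\textbf{Step~2 misses the actual crux.} Since every contributing intersection is a tangency, there is no ``transversal case'' to reduce to. Wolff's rectangle lemma already counts tangent pairs; the difficulty is its linear term.

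After pigeonholing, the paper has multiplicities $\lambda_1,\lambda_2$, a common tangency scale $\Delta$, and coarse $(\mu,\nu)$-rich rectangles. Applying Lemma~\ref{le:incid} and multiplying by $(\mu\nu)^{3/4}$ produces $\#\cF^{3/4}\#\cG^{3/4}$ plus the term $\mu^{-1/4}\nu^{3/4}\#\cF$. In your symmetric setting this term is $\mu^{1/2}\#\cF$. It is dominated by $\#\cF^{3/4}\#\cG^{3/4}+\#\cF$ only if $\nu\lesssim\mu^{1/3}$ or $\#\cF/\mu\lesssim(\#\cG/\nu)^3$. Proposition~\ref{thm:clam} shows it is genuinely too large without broadness, so the linear terms do not simply ``absorb'' the bad configurations.

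The paper closes this by proving $\#\tilde{\cR}_3\lesssim\delta^{-\varepsilon}(\#\cG/\nu)^3$, as follows.
\begin{itemize}
\item For each coarse rectangle it takes the minimal $t$ (longest rectangle) at which some enclosing $(\sigma,t)$-rectangle is still tangent to at least $Bt^{\varepsilon}\#\cG(\tilde R)$ curves of $\cG$.
\item Minimality forces each such $S$ to be $K$-broad: most pairs in $\cG(S)^2$ have $\Delta(g_1,g_2)\in[\sigma t/K,\sigma t]$.
\item Counting those pairs gives $\#\cS\lesssim_\varepsilon t^{-2\varepsilon}\rho^{-1/2}(\#\cG/\nu)^2$.
\item Each $S$ contains $\lesssim\sqrt{\rho/t}$ coarse rectangles.
\item Broadness in the form $t^{\varepsilon}\nu\lesssim\#\cG(S)\lesssim t^{1/2}\#\cG$ converts the leftover $t^{-1/2}$ into $\#\cG/\nu$. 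This is where $\alpha\ge\frac12$ enters.
\end{itemize}

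Your heuristic is that only a $\sigma^\alpha$-fraction of arcs through a $\sigma$-ball are mutually $\sigma$-tangent. It matches neither the hypothesis nor the needed conclusion. The transferred hypothesis bounds the number of curves tangent to a $(\sigma,t)$-rectangle by $1+t^\alpha$ times the \emph{whole} family. So the planar step, as proposed, does not close.
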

Following a similar strategy to \cite{FPW}, we will reduce Theorem \ref{thm:main} to a bilinear analogue of Theorem \ref{thm:pyz} which we state in the following subsection (Theorem \ref{thm:kakeya}).
\subsection{The Bilinear Curved Kakeya Estimate in $\R^2$}\label{sec:kakeya}
The following definitions are standard in the literature of incidence geometry (\cite{Wolff},\cite{Zahl}). Let $I$ be a compact interval. For $C^2$ functions $f,g:\R^2\to \R$ we define
\[
\tau(f, g) := \sup_{\theta \in I} \Big[|f(\theta) - g(\theta)| + |f'(\theta) - g'(\theta)| + |f''(\theta) - g''(\theta)|\Big].
\]
In what follows, we will identify every quadratic $\frac{a}{2}s^2+bs+c$ with its corresponding triple of coefficients $(a,b,c)$.
\begin{definition}[Bipartite pair]
Let $\rho>0$. A pair of families $(\mathcal{C}, \mathcal{D})$ of quadratics is called a \emph{$\rho$-bipartite pair} if 
\begin{gather*}
    \rho\leq \tau(f, g) \leq 100\rho, \mbox{ for all } f\in\mathcal{C},g\in\mathcal{D},\\
    \tau(f,g)\leq\rho, \mbox{ if } f,g\in\mathcal{C} \mbox{ or }f,g\in\mathcal{D}.
\end{gather*}
\end{definition}
\begin{definition}[Curvilinear Rectangles]
    Let $0<\delta\leq t\leq 1$. For any interval $J\subseteq I$, let $f^\delta(J)$ (or simply $f^\delta$ if $J=I$) be the Euclidean $\delta$-neighbourhood of the graph of $f$ over $J$. We call $f^\delta(J)$ a curvilinear rectangle. Additionally, if $J$ has length $\sqrt{\frac{\delta}{t}}$, we call $f^\delta(J)$ a $(\delta,t)$-rectangle. The area of a $(\delta,t)$-rectangle is $\sim \delta^{3/2}t^{-1/2}$.
\end{definition}
\begin{definition}[Tangency]\label{def:tangency}
 We fix an absolute constant $C>0$. Let $J\subseteq I$ be an interval and let $R=g^\delta(J)$. We say that $f$ and $R$ are tangent if $R\subseteq f^{C\delta}(J)$ and write $f\sim R$.
\end{definition}
\begin{definition}[Quantitative Broadness]\label{def:broad}
    Let $\delta\in(0,1)$ and $\alpha>0$. Let $\mathcal{Q}\subseteq\R^3$ be a $\delta$-separated family of quadratic polynomials. We say that $\mathcal{Q}$ is $(\delta,\alpha)$-broad if there exists an absolute constant $c_0\in(0,1)$ such that for every $\sigma\in[\delta,1],\ t\in[\sigma,1]$ and every $(\sigma,t)$-rectangle $R$ we have
    \begin{gather}\label{eq:broad}
       \# \{f\in\mathcal{Q}:\ f\sim R\}\lesssim 1+ t^\alpha \#\mathcal{Q}.
    \end{gather}
\end{definition}
\begin{remark}
    Informally, Definition \ref{def:broad} asserts that long rectangles cannot be tangent to too many curves. An assumption of this type is needed in order to rule out ``narrow" examples which are not amenable to bilinear techniques and which indeed violate any non-trivial estimates. We shall present such examples in detail in Section \ref{sec:4}.
\end{remark}
\begin{remark}
    Definition \ref{def:broad} is similar in spirit to Definition $2.4 $ in \cite{Zahl2}. The main differences are
    \begin{enumerate}
        \item The latter is a local property, in the sense that it requires the number of curves tangent to a large rectangle $R$ to be small compared to a \textit{broad} ``base" $(\delta,t)$-rectangle $R_0$. In contrast, Definition \ref{def:broad} only requires the number of curves tangent to a large rectangle $R$ to be small compared to the total number of curves, which is a strictly weaker hypothesis, except in degenerate cases where all curves are tangent to a single rectangle.
        \item In \cite{Zahl2}, Zahl only requires the condition to hold for an exponent $\alpha=\varepsilon$, which can be made arbitrarily small at the cost of a $\delta^{-\varepsilon}$ loss in the right-hand side of the main estimate (Theorem $1.11$) in \cite{Zahl2}. In the main theorem of this paper, we require $\alpha$ to be larger than an absolute number. However, as we remarked above, this hypothesis is not restrictive at all when the total number of curves is $\gtrsim\delta^{-1}$, and in general tends to be more restrictive for families $\cQ$ with small cardinalities.
    \end{enumerate}
\end{remark}
\begin{remark}
     Definition \ref{def:broad} has similar structure to the one of Definition \ref{def:bush}. As we will see in Section \ref{sec:2} (where we project Heisenberg tubes to parabolas), $(\delta,\alpha)$-broadness of tubes implies $(\delta,\alpha)$-broadness of their planar projections. This is a crucial step for reducing Theorem \ref{thm:main} to the following planar curved Kakeya estimate.
\end{remark}
\begin{theorem}\label{thm:kakeya}
    Let $\varepsilon>0$, $0<\delta\leq\rho\leq 1,\ \alpha\geq1/2$ and $(\mathcal{F}, \mathcal{G})$ be a $\rho$-bipartite pair such that $\mathcal{F}$ and $\mathcal{G}$ are each $\delta$-separated in the metric $\tau$ and $\mathcal{F},\mathcal{G}\subseteq B_\tau(0,100)$. Moreover, suppose $\mathcal{F}$ and $\mathcal{G}$ are $(\delta, \alpha)$-broad. Then we have
    \begin{equation}\label{eq:parabola}
    \int_{[0,1]^2} \Big( \sum_{f \in \mathcal{F}} \chi_{f^\delta} \Big)^{3/4} \Big( \sum_{g \in \mathcal{G}} \chi_{g^\delta} \Big)^{3/4} \lesssim_\varepsilon \rho^{-1/2} \delta^{3/2-\varepsilon} \left( \#\mathcal{F}^{3/4} \#\mathcal{G}^{3/4} + \#\mathcal{F} + \#\mathcal{G} \right).
\end{equation}
\end{theorem}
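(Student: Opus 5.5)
We would prove \eqref{eq:parabola} by a Wolff-type reduction: first decompose the bilinear integrand according to level sets of the two multiplicity functions, then reduce to counting $(\delta,t)$-rectangles that are bilinearly tangent to curves of $\mathcal F$ and $\mathcal G$, and finally bound that count with the bipartite tangency-rectangle estimate of \cite{PYZ} (in the spirit of Lemma 1.4 of \cite{Wolff}).

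\emph{Step 1: pigeonholing and geometry.} By dyadic pigeonholing (losing $\log(1/\delta)$ factors) we restrict to a set $E_{\mu,\nu}\subseteq[0,1]^2$ where $\sum_{\mathcal F}\chi_{f^\delta}\sim\mu$ and $\sum_{\mathcal G}\chi_{g^\delta}\sim\nu$. The left-hand side is then $\lesssim \mu^{3/4}\nu^{3/4}|E_{\mu,\nu}|$, and it suffices to bound $|E_{\mu,\nu}|$.

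Since $(\mathcal F,\mathcal G)$ is $\rho$-bipartite, any $f\in\mathcal F$ and $g\in\mathcal G$ satisfy $\tau(f,g)\sim\rho$. Because they are quadratics, $f^\delta\cap g^\delta$ is therefore contained in $O(1)$ curvilinear rectangles of dimensions roughly $\delta\times\sqrt{\delta/t}$, where $t$ is the tangency parameter, $\rho\gtrsim t\gtrsim\delta$. This gives $|f^\delta\cap g^\delta|\lesssim \delta^{3/2}t^{-1/2}$.

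A second dyadic pigeonholing on $t$ lets us cover $E_{\mu,\nu}$ by a family $\mathcal R$ of $(\delta,t)$-rectangles. Each $R\in\mathcal R$ is tangent to $\gtrsim\mu'$ curves of $\mathcal F$ and $\gtrsim\nu'$ curves of $\mathcal G$, with $\mu'\nu'\gtrsim\mu\nu\,\delta^{\varepsilon}$ after accounting for angular spread. This is where we need the standard refinement from \cite{Schlag}/\cite{Wolff}: the curves through a typical point of $E$ are split into tangency classes.

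Hence $|E_{\mu,\nu}|\lesssim \#\mathcal R\cdot\delta^{3/2}t^{-1/2}$.

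\emph{Step 2: the counting estimate.} We invoke the bilinear rectangle bound for quadratics proved in \cite{PYZ} (following Wolff). For a $\rho$-bipartite pair, the number of $(\delta,t)$-rectangles that are $\mu'$-rich for $\mathcal F$ and $\nu'$-rich for $\mathcal G$ satisfies
\[
\#\mathcal R\lesssim_\varepsilon \delta^{-\varepsilon}\Big(\big(\tfrac{\#\mathcal F\#\mathcal G}{\mu'\nu'}\big)^{3/4}+\tfrac{\#\mathcal F}{\mu'}+\tfrac{\#\mathcal G}{\nu'}\Big),
\]
up to the $\rho$- and $t$-dependent factors, which we track through a rescaling $\delta\mapsto\delta/\rho$. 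We then multiply by $\mu^{3/4}\nu^{3/4}\delta^{3/2}t^{-1/2}$ and compare term by term.

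The first term gives $\#\mathcal F^{3/4}\#\mathcal G^{3/4}\delta^{3/2}t^{-1/2}$, and we need $t^{-1/2}\lesssim\rho^{-1/2}$. The other two terms give $\#\mathcal F\,\mu^{-1/4}\nu^{3/4}\delta^{3/2}t^{-1/2}$ and its symmetric counterpart.

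\emph{Step 3: broadness (main obstacle).} The hard part is controlling the mismatched terms: a factor $\nu^{3/4}\mu^{-1/4}$ is harmless only if $\nu\lesssim\mu^{1/3}$, and more importantly the small-$t$ regime produces $t^{-1/2}\gg\rho^{-1/2}$. Both problems reflect tangency-clustering of bush type.

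Here we use $(\delta,\alpha)$-broadness. Any $(\delta,t)$-rectangle, viewed inside a $(\sigma,t')$-rectangle with $\sigma=\delta$, is tangent to at most $1+t^\alpha\#\mathcal G$ curves of $\mathcal G$, so $\nu\lesssim 1+t^{\alpha}\#\mathcal G$, and similarly $\mu\lesssim 1+t^{\alpha}\#\mathcal F$. With $\alpha\ge 1/2$ we get
\[
\nu^{3/4}t^{-1/2}\le \nu^{1/4}\,(t^{\alpha}\#\mathcal G)^{1/2}t^{-1/2}\lesssim \nu^{1/4}\#\mathcal G^{1/2},
\]
which trades the $t^{-1/2}$ loss for cardinality factors. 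These are then absorbed by interpolating between $\#\mathcal F^{3/4}\#\mathcal G^{3/4}$ and the linear terms, using $\mu\le\#\mathcal F$ and $\nu\le\#\mathcal G$.

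We expect the delicate point to be the case $\mu\sim1$ or $\nu\sim1$, where broadness gives nothing. That case has to be handled directly, by the trivial bound $|E|\le\sum_f|f^\delta\cap\bigcup g^\delta|\lesssim\#\mathcal F\,\delta^{3/2}\rho^{-1/2}\cdot\nu$, with the induction-on-scales step from \cite{PYZ} used to remove the $\delta^{-\varepsilon}$ losses across dyadic scales of $\sigma$.
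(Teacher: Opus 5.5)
There is a genuine gap, in two places.

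\textbf{Steps 1--2: the intersection geometry and the counting step.} The geometry in Step 1 is reversed. For a $\rho$-bipartite pair, $f^\delta\cap g^\delta$ consists of at most two pieces of length $\lesssim\delta/\sqrt{\rho(\Delta(f,g)+\delta)}\le\sqrt{\delta/\rho}$. These are cells of ``$(\delta,t)$-type'' with $t\sim\rho(\Delta(f,g)+\delta)/\delta\gtrsim\rho$, not $t\lesssim\rho$. So $|f^\delta\cap g^\delta|\lesssim\delta^{3/2}\rho^{-1/2}$ always, and the regime $t^{-1/2}\gg\rho^{-1/2}$ you plan to fight with broadness never occurs.

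The real difficulty is the opposite regime $\Delta(f,g)\gg\delta$, where the cells are short. Wolff's bound (Lemma \ref{le:zahl}) counts only tangency rectangles, i.e.\ $(\eta,t)$-rectangles for a $t$-bipartite pair, and it has no fine-scale analogue. For example, take $\cF=\{s^2/2+c\}$ and $\cG=\{-s^2/2+c'\}$ with $c,c'$ in a $\delta$-net of $[0,1]$. This is a $1$-bipartite pair with bounded multiplicity; it even satisfies the theorem's broadness hypothesis with $\alpha=1$. It produces $\sim\delta^{-2}$ essentially disjoint $\delta\times\delta$ cells, each tangent to $\sim1$ curve of each family, whereas $(\#\cF\#\cG)^{3/4}+\#\cF+\#\cG\sim\delta^{-3/2}$. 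So applying the rectangle bound ``up to $\rho$- and $t$-dependent factors via $\delta\mapsto\delta/\rho$'' is not a valid step.

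The paper handles this regime in three moves:
\begin{enumerate}
\item A two-ends pigeonholing (Lemma \ref{le:common}) fixes a common scale $\Delta$. At a typical point $p$ it produces a $\delta\times\delta/\sqrt{\rho\Delta}$ rectangle $R_p$ tangent to a $\delta^{\varepsilon}$-fraction of both $\cF(p)$ and $\cG(p)$.
\item Each $R_p$ is rescaled to a coarse $(\Delta,\rho)$-rectangle. At that scale the intersections are genuine tangencies, and Lemma \ref{le:incid} applies with $\eta=\Delta$, $t=\rho$.
\item The number $M$ of fine rectangles per coarse rectangle is bounded by a log-convex combination of $M\le\Delta^2/\delta^2$ and $M\lesssim\mu\nu/(\lambda_1\lambda_2)$. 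Here $\lambda_j$ are the multiplicities and $\mu,\nu$ the richness of the coarse rectangles.
\end{enumerate}
The resulting factor $\Delta^{1/2}\delta^{-1/2}$ exactly cancels the area $\delta^2\Delta^{-1/2}\rho^{-1/2}$ of $R_p$. Nothing in your proposal plays this role.

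\textbf{Step 3: the broadness argument does not close.} Your chain $\nu^{3/4}t^{-1/2}\le\nu^{1/4}(t^\alpha\#\cG)^{1/2}t^{-1/2}\lesssim\nu^{1/4}\#\cG^{1/2}$ needs $t^{(\alpha-1)/2}\lesssim1$, i.e.\ $\alpha\ge1$. Even granting it, $\#\cF\mu^{-1/4}\nu^{1/4}\#\cG^{1/2}\lesssim\#\cF^{3/4}\#\cG^{3/4}$ is equivalent to $\#\cF/\mu\lesssim\#\cG/\nu$. That is the opposite of the regime in which the term $\#\cF/\mu$ dominates. What must be proved is \eqref{eq:pencil}, which fails for weakly broad families (Proposition \ref{thm:clam}). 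A pointwise broadness bound on $\nu$ at the tangency scale, combined with interpolation, therefore cannot suffice.

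The missing idea is a second two-ends argument \emph{inside} $\cG$:
\begin{enumerate}
\item For each coarse rectangle $\tilde{R}$, take the minimal scale $t_{\tilde{R}}$ at which some lengthening $S\supset\tilde{R}$ has $\#\cG(S)\ge Bt_{\tilde{R}}^{\varepsilon}\nu$ (Lemma \ref{le:robust}).
\item Minimality forces most pairs in $\cG(S)^2$ to be $K$-transverse. Double counting such pairs gives $\#\tilde{\cR}_4\lesssim K^{1/2}t^{-1/2-2\varepsilon}(\#\cG/\nu)^2$.
\item Only then is $(\delta,\tfrac12)$-broadness applied, to the $(\sigma,t)$-rectangle $S$. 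This gives $t^{-1/2}\lesssim t^{-\varepsilon}\#\cG/\nu$, hence $\#\cF/\mu\lesssim\#\tilde{\cR}_3\lesssim(\#\cG/\nu)^3$.
\end{enumerate}

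\textbf{The case you flag as delicate.} This is actually the easy case. If $\nu\lesssim\mu^{1/3}$, \eqref{eq:pencil} follows at once, and vacuous broadness ($\#\cG\lesssim t^{-1/2}$) forces $\nu\lesssim1$. Your ``trivial'' bound $|E|\lesssim\#\cF\,\delta^{3/2}\rho^{-1/2}\nu$ is also unjustified. Bounded multiplicity does not stop a single $f^\delta$ from being crossed by many $g^\delta$, so $|f^\delta\cap\bigcup_g g^\delta|$ can be $\sim\delta$. The example above shows this.
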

%\begin{remark}
   % The conclusion of Theorem \ref{thm:kakeya} with the exponent $3/4$ replaced by $p\in[3/4,1]$ follows from Theorem \ref{thm:kakeya} by simply interpolating with the trivial estimate for $p=1$.
%\end{remark}
The rest of the paper is divided into three sections: 
\begin{enumerate}
    \item In Section \ref{sec:2} we reduce Theorem \ref{thm:main} to Theorem \ref{thm:kakeya}. We first localise the estimate to a unit Korányi ball. We then show that each Heisenberg $\delta$-tube maps into the $\delta^2$-neighbourhood of a parabolic arc via Heisenberg projection. Lastly, we show that the broadness condition for the Heisenberg tubes implies the broadness condition for the parabolas.
    \item In Section \ref{sec:3} we prove Theorem \ref{thm:kakeya} using tools from \cite{Wolff},\cite{Zahl} to reduce (\ref{eq:parabola}) to an incidence-geometric lemma for curvilinear rectangles which can be found in \cite{Zahl}, but whose original formulation and proof is in \cite{Wolff}. We also make crucial use of the broadness assumption for $\cF$ and $\cG$.
    \item In Section \ref{sec:4} we show that the $L^p$ and $\delta$ exponents in Theorem \ref{thm:kakeya} are sharp and discuss further issues related to sharpness of other exponents.\\\\
\end{enumerate}
We introduce some standard notation that will be used throughout the paper. Let $(X,\sigma)$ be a metric space.
\begin{itemize} 
    \item  We denote the $s$-dimensional Hausdorff measure on $X$ with respect to $\sigma$ by $\mathcal{H}^s_\sigma$.
    \item We denote the open ball centred at $x\in X$ of radius $r>0$ by $B_\sigma(x,r)$. Additionally, we set $B_0:=B_\sigma(0,1)$ (the metric $\sigma$ will always be clear from context).
    \item For a set $A\subseteq\mathbb{R}^d$, denote the Euclidean $\delta$-neighbourhood of $A$ by $N_\delta(A)$.
    \item For a set $S\subseteq\R^3$ and $\mathcal{T}_j,\ j=1,2$ collections of Heisenberg tubes, let $\mathcal{T}_j(S):=\{T_j\in\mathcal{T}_j:\ T_j\cap S\neq\emptyset\},\ j=1,2$.
    \item  We let $C$ denote a generic non-negative absolute constant.
    \item For $A,B>0,\eta\neq1$ we write 
        \begin{itemize}
            \item $A\lesssim B$ if $A\leq CB$. We also write $A\sim B$ if $A\lesssim B\lesssim A$.
            \item $A\lesssim_\eta B$ if there exists a constant $C=C(\eta)$ such that $A\leq C(\eta)B$.
            \item $A\lessapprox_\eta B$ if $A\lesssim|\log \eta|B$ and $A\approx_\eta B$ if $A\lessapprox_\eta B\lessapprox_\eta A$.
        \end{itemize}
    \item If $I$ is an interval centred at $x$ with length $L$ and $C>0$ is a constant, then we denote the interval centred at $x$ with length $L/C$ by $I/C$.
 \end{itemize} 
\section*{Acknowledgements}
This research was inspired by discussions surrounding the multilinear Kakeya problem in non-Euclidean settings. I am grateful to my advisor Anthony Carbery for his guidance, as well as Malabika Pramanik and Joshua Zahl for their valuable input through insightful discussions. I would also like to thank Kaiyi Huang, Betsy Stovall and Sarah Tammen for sharing their results from \cite{Stovall} prior to public dissemination, as well as Katrin F\"assler for sharing her work on this problem with my advisor through personal communication. This research was funded by EPSRC and the Maxwell Institute for Mathematical Sciences.
\section{Proof of the Bilinear Heisenberg Kakeya Estimate}\label{sec:2}
\subsection{Preliminary Reductions and Heisenberg Projections}\label{sec:localise}
We show that (\ref{eq:main}) from the main Theorem \ref{thm:main} can be deduced from the following local version:
\begin{equation}\label{eq:local}
    \int_{B_0} \Big( \sum_{T_1 \in \mathcal{T}_1(B_0)} \chi_{T_1} \Big)^{3/4} \Big( \sum_{T_2 \in \mathcal{T}_2(B_0)} \chi_{T_2} \Big)^{3/4} \lesssim_\epsilon \delta^{4 - \epsilon} \left( \#\mathcal{T}_1(B_0)^{3/4} \#\mathcal{T}_2(B_0)^{3/4} + \#\mathcal{T}_1(B_0) + \#\mathcal{T}_2(B_0) \right).
\end{equation}
Indeed, let $\mathcal{B}$ be a collection of finitely overlapping Korányi balls of radius $1$ such that $\R^3=\bigcup\limits_{B\in\mathcal{B}}B$.  By definition, the left-translate of a Heisenberg $\delta$-tube is also a Heisenberg $\delta$-tube. This is also true for $d$-metric balls. Suppose that $B\in\mathcal{B}$ is centred at $p\in\R^3$. By an elementary calculation, one can see that the map $q\mapsto p\cdot q$ has Jacobian equal to 1. By changing variables  we get, for each $B\in\mathcal{B}$,
\begin{gather*}
    \int_{B} \Big( \sum_{T_1 \in \mathcal{T}_1} \chi_{T_1}(q) \Big)^{3/4} \Big( \sum_{T_2 \in \mathcal{T}_2} \chi_{T_2}(q) \Big)^{3/4}dq=
    \int_{B_0} \Big( \sum_{T_1 \in \mathcal{T}_1} \chi_{T_1}(p\cdot q )\Big)^{3/4} \Big( \sum_{T_2 \in \mathcal{T}_2} \chi_{T_2}(p\cdot q) \Big)^{3/4}dq\\
    =\int_{B_0}\Big( \sum_{T_1' \in \mathcal{T}_1'} \chi_{T_1'} \Big)^{3/4} \Big( \sum_{T_2' \in \mathcal{T}_2'} \chi_{T_2'} \Big)^{3/4} 
\end{gather*}
where $\mathcal{T}_j'=p^{-1}\mathcal{T}_j,\ j=1,2$. Suppose we know (\ref{eq:local}). Then, the last display is
\begin{gather*}
        \lesssim_\epsilon  \delta^{4 - \epsilon} \left( \#\mathcal{T}_1'(B_0)^{3/4} \#\mathcal{T}_2'(B_0)^{3/4} + \#\mathcal{T}_1'(B_0) + \#\mathcal{T}_2'(B_0) \right)\\
         =\delta^{4 - \epsilon} \left( \#\mathcal{T}_1(B)^{3/4} \#\mathcal{T}_2(B)^{3/4} + \#\mathcal{T}_1(B) + \#\mathcal{T}_2(B) \right).
\end{gather*} 
We now sum over $B\in\mathcal{B} $ to get
\begin{gather*}
    \int_{\mathbb{R}^3} \Big( \sum_{T_1 \in \mathcal{T}_1} \chi_{T_1} \Big)^{3/4} \Big( \sum_{T_2 \in \mathcal{T}_2} \chi_{T_2} \Big)^{3/4} \lesssim\sum_{B\in\mathcal{B}} \int_{B} \Big( \sum_{T_1 \in \mathcal{T}_1} \chi_{T_1} \Big)^{3/4} \Big( \sum_{T_2 \in \mathcal{T}_2} \chi_{T_2} \Big)^{3/4} \\
    \lesssim \delta^{4 - \epsilon}\sum_{B\in\mathcal{B}} \left( \#\mathcal{T}_1(B)^{3/4} \#\mathcal{T}_2(B)^{3/4} + \#\mathcal{T}_1(B) + \#\mathcal{T}_2(B) \right)\\
    \sim\delta^{4 - \epsilon}\Bigg[\sum_{B\in\mathcal{B}} \left( \#\mathcal{T}_1(B)^{3/4} \#\mathcal{T}_2(B)^{3/4}\right) + \#\mathcal{T}_1 + \#\mathcal{T}_2\Bigg]\\
    \leq  \delta^{4 - \epsilon}\Bigg[\left(\sum_{B\in\mathcal{B}} \#\mathcal{T}_1(B)^{3/2}\right)^{1/2} \left(\sum_{B\in\mathcal{B}}\#\mathcal{T}_2(B)^{3/2}\right)^{1/2} + \#\mathcal{T}_1 + \#\mathcal{T}_2\Bigg]\\
    \leq  \delta^{4 - \epsilon}\Bigg[\left(\sum_{B\in\mathcal{B}} \#\mathcal{T}_1(B)\right)^{3/4} \left(\sum_{B\in\mathcal{B}}\#\mathcal{T}_2(B)\right)^{3/4} + \#\mathcal{T}_1 + \#\mathcal{T}_2\Bigg]\\
    \sim \delta^{4 - \epsilon}\Bigg[ \#\mathcal{T}_1^{3/4} \#\mathcal{T}_2^{3/4} + \#\mathcal{T}_1 + \#\mathcal{T}_2\Bigg]
\end{gather*}
where in the last two inequalities we used Cauchy-Schwarz and the $\ell^1\hookrightarrow\ell^{3/2}$ embedding.\\\\
In what follows, we fix $I=[-10,10]$. We begin by reducing (\ref{eq:local}) to an inequality for parabolas in the plane. We take an approach similar to Section $2.2$ in \cite{FPW}. Let $\mathfrak{v}=(1,1,0)$ and define $$\mathbb{W}:=\rm{span}\{\mathfrak{v},e_3\}=\{(x,x,t):\ x,t\in\mathbb{R}\},\ \mathbb{L}:=\mathbb{W}^\perp=\{(s,-s,0):\ s\in\mathbb{R}\}.$$ 
For purposes of notational simplicity, we will identify each $(x,x,t)\in\mathbb{W}$ with the point $(x,t)\in\R^2$. Consider the Heisenberg projection onto $\mathbb{W}$ given by $$\pi_\mathbb{W}(x,y,t)=\Big(\frac{x+y}{2},t+\frac{1}{4}(x^2-y^2)\Big).$$ 
The following lemma is essentially Lemma $2.18$ in \cite{FPW}. It states that a Heisenberg $\delta$-tube projects via $\pi_\mathbb{W}$ into the $\delta^2$-neighbourhood of a parabolic arc  -- as long as its direction is far from $\bbL$.
\begin{lemma}\label{le:proj}
    Let $\ p=(x,y,t)\in B_d(0,1),\ e=(a,b,0)\in\mathbb{S}^1\times\{0\}$ such that $\dist(e,\bbL)\geq 1/2$. Let $T=T_e^\delta(p)\cap B_d(0,1)$ and define $\gamma_{T}(s):=\pi_\mathbb{W}(p\cdot se)$. Then there exists an absolute constant $c>0$ such that
    $$\pi_{\mathbb{W}}(T)\subseteq N_{c\delta^2}\big(\Gamma_{T}(I)\big),$$
    where $\Gamma_{T}(I)$ is the graph of $\gamma_T$ restricted on $I$.
\end{lemma}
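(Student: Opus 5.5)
The plan is a direct computation. By definition, every point of $T$ has the form $q=p\cdot se\cdot z$ with $s\in[-1/2,1/2]$ and $z=(z_1,z_2,z_3)\in B_d(0,\delta)$. From the Korányi gauge, $|z_1|,|z_2|\le\delta$ and $|z_3|\le\delta^2/4$. I will show that $\pi_{\mathbb W}(q)$ lies within $O(\delta^2)$ of the point $\gamma_T(s+h)$ for a suitable $h=O(\delta)$. Since $s+h\in I=[-10,10]$ for small $\delta$, this gives the claim.

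\textbf{Step 1: expand the projection.} Write $q_0:=p\cdot se=(X,Y,T_0)$, where $X=x+sa$ and $Y=y+sb$. Because $p\in B_d(0,1)$ and $|s|\le 1/2$, the quantities $X,Y$ are bounded by an absolute constant. Using the group law and the formula for $\pi_{\mathbb W}$, I expect
\[
\pi_{\mathbb W}(q_0\cdot z)=\gamma_T(s)+\Big(u,\;(X-Y)u\Big)+O(\delta^2),\qquad u:=\tfrac{z_1+z_2}{2}.
\]
This comes from collecting the terms linear in $z_1,z_2$, namely $\tfrac12(Xz_2-Yz_1)+\tfrac12(Xz_1-Yz_2)=(X-Y)u$. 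The quadratic terms $\tfrac14(z_1^2-z_2^2)$ and $z_3$ are $O(\delta^2)$.

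\textbf{Step 2: identify the displacement as tangent.} Differentiate $\gamma_T(s)=\big(\tfrac{x+y}{2}+s\tfrac{a+b}{2},\ t+\tfrac s2(xb-ya)+\tfrac14(X^2-Y^2)\big)$. Using $xb-ya=Xb-Ya$, this gives $\gamma_T'(s)=\tfrac{a+b}{2}\,(1,\,X-Y)$. So the first-order displacement $(u,(X-Y)u)$ equals $h\gamma_T'(s)$ with $h:=2u/(a+b)$. This is where the hypothesis $\dist(e,\mathbb L)\ge1/2$ enters. We have $\dist(e,\mathbb L)=|a+b|/\sqrt2$, so $|a+b|\gtrsim1$ and hence $|h|\lesssim\delta$.

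\textbf{Step 3: Taylor expand along the curve.} The second derivative $\gamma_T''=(0,\tfrac{a^2-b^2}{2})$ is bounded. Therefore $\gamma_T(s+h)=\gamma_T(s)+h\gamma_T'(s)+O(h^2)$, and so $|\pi_{\mathbb W}(q)-\gamma_T(s+h)|\lesssim\delta^2$. Since $|s+h|\le 1/2+O(\delta)\le 10$, the point $\gamma_T(s+h)$ lies on $\Gamma_T(I)$. This proves the inclusion with an absolute constant $c$.

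The only delicate point is the cancellation in Step 2. The $O(\delta)$ horizontal perturbation could a priori move the projection by $\delta$ transversally to the parabola, which would be far worse than $\delta^2$. It must be checked that this first-order displacement is exactly parallel to $\gamma_T'(s)$, and that $|a+b|$ is bounded below so the reparametrisation $h$ stays $O(\delta)$. Everything else is bookkeeping of the quadratic error terms.
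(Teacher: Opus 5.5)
Your proposal is correct and follows essentially the paper's proof: you expand $\pi_{\mathbb W}(p\cdot se\cdot z)$ and compare it with $\gamma_T\bigl(s+\tfrac{z_1+z_2}{a+b}\bigr)$ (your $h$ is exactly the paper's shift), bound the residual $z_3+\tfrac14(z_1^2-z_2^2)$ plus the curvature term by $O(\delta^2)$, and check that the shifted parameter stays in $I$. The differences are cosmetic: you take $|s|\le 1/2$ directly from the tube definition where the paper derives $|s|\le 3$ via the triangle inequality, and you make the first-order cancellation explicit through $\gamma_T'(s)=\tfrac{a+b}{2}(1,X-Y)$, whereas the paper leaves it implicit in the computation.
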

\begin{proof}
    Let $q\in T$. By definition, $q=p\cdot se\cdot z$  for some $s\in\R,\ z=(z_1,z_2,z_3)\in B_d(0,\delta)$. By left-invariance and the triangle inequality we have 
    $$|s|=\|se\|_\bbH=d(p\cdot se,p)\leq  d(p\cdot se, q)+d(q,0)+ d(0,p)=\|z\|_\bbH+\|q\|_\bbH +\|p\|_\bbH\leq 3.$$
    We have 
    $$\gamma_T(s)=\Bigg(\frac{x+y}{2}+\frac{a+b}{2}s,\frac{a^2-b^2}{4}s^2+\frac{1}{2}(a+b)(x-y)s+t+\frac{1}{4}(x^2-y^2)\Bigg).$$
    Since $\dist(e,\bbL)\geq 1/2$, we have $|a+b|\geq 1/2$. Therefore, by setting $\theta_0(p)=\frac{x+y}{2}$ we may reparametrise by $\theta(s)=\frac{a+b}{2}s+\theta_0(p)$ to get 
    \begin{gather}
        f_T(\theta(s)):=\gamma_T(s)=\Big(\theta,\ \kappa(e)(\theta-\theta_0(p))^2+m(p)(\theta-\theta_0(p))+v(p)\Big),
    \end{gather}
    where $\kappa(e):=\frac{a-b}{a+b},\ m(p):=x-y,\ v(p):=t+\frac{1}{4}(x^2-y^2)$. This reparametrisation will be useful later. Moreover,
    $$\pi_\mathbb{W}(q)=\gamma_T(\theta)+\Big(\frac{z_1+z_2}{2},\ (a-b)(z_1+z_2)\frac{s}{2}+\frac{1}{2}(x-y)(z_1+z_2)+\frac{1}{4}(z_1^2-z_2^2)+z_3\Big).$$
    Since $\dist(e,\mathbb{L})\geq c$, we have $a+b\geq c$ and thus
    $$\Bigg|\pi_\bbW(q)-\gamma_T(s+\frac{z_1+z_2}{a+b})\Bigg|=\Bigg|\pi_\mathbb{W}(q)-f_T(\theta+\frac{z_1+z_2}{2})\Bigg|=\Bigg|\frac{1}{4}(z_1^2-z_2^2)+\frac{1}{4}\frac{a^2-b^2}{(a+b)^2}(z_1+z_2)^2+z_3\Bigg|\lesssim\delta^2.$$
    We conclude the proof by showing that $s+\frac{z_1+z_2}{a+b},\theta+\frac{z_1+z_2}{2}\in I$. Indeed, since $s\in[-3,3]$, we have  $\theta-\theta_0(p)\in\Big[-3{(a+b)}/{2},3({a+b})/{2}\Big]$. Moreover, $\big|\frac{a+b}{2}\big|\leq1$ and $|\theta_0(p)|\leq\|p\|_\bbH\leq1$, so that $\theta\in[-4,4]$. We also have $\big|\frac{z_1+z_2}{2}\big|\leq\|z\|_\bbH\leq1$. Thus, $\theta+\frac{z_1+z_2}{2}\in[-5,5]$ and $s+\frac{z_1+z_2}{a+b}\in[-10,10]$. Notice that we also have
    \begin{equation*}
        \pi_\bbW(T)\subseteq N_{c\delta^2}(Gr(f_T)),
    \end{equation*}
    where $Gr(f_T)$ is the graph of $f_T$ over $I$.
    \end{proof}   
The next lemma is similar to Lemma 2.10 in \cite{FPW}:
\begin{lemma}\label{le:haus}
 Let $\delta\in(0,1), w\in\mathbb{W}, e\in\mathbb{S}^1\times\{0\}$ satisfying $|\mathfrak{v}-e|\geq1/2$, $p=(\xi,t)\in\R^2\times \R$ and $T=T_{e}^\delta(p)$ be the Heisenberg $\delta$-tube centred at $p$ with direction $e$. Then we have $$\mathcal{H}_d^1\big(T\cap\pi_\mathbb{W}^{-1}(\{w\})\big)\lesssim\delta.$$ Moreover, if $w\notin\pi_\mathbb{W}(T)$ then $\mathcal{H}_d^1\big(T\cap\pi_\mathbb{W}^{-1}(\{w\})\big)=0$. We may combine the two conclusions into one as follows:
 \begin{equation}\label{eq:lemma2}
     \mathcal{H}_d^1\big(T\cap\pi_\mathbb{W}^{-1}(\{w\})\big)\lesssim\delta \chi_{\pi_\bbW (T)}(w).
 \end{equation}
\end{lemma}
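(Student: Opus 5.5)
The plan is to identify the fibres of $\pi_\bbW$ explicitly as horizontal lines parallel to $\bbL$. Then the estimate reduces to a planar statement about two transversal Euclidean lines.

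\emph{Step 1: the fibres.} Write $w=(u,u,\tau)\in\bbW$ and let $\ell_0=\frac{1}{\sqrt2}(1,-1,0)$. If $\pi_\bbW(x,y,t)=(u,\tau)$, put $x=u+r$ and $y=u-r$. Then $x^2-y^2=4ur$, so $t=\tau-ur$. On the other hand,
\[
(u,u,\tau)\cdot(r,-r,0)=\big(u+r,\,u-r,\,\tau+\tfrac12(-ur-ur)\big)=(u+r,u-r,\tau-ur).
\]
Hence $\pi_\bbW^{-1}(\{w\})=\{w\cdot s\ell_0:\ s\in\R\}$ is the horizontal line through $w$ with direction $\ell_0\parallel\bbL$. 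By left invariance, $d(w\cdot s\ell_0,w\cdot s'\ell_0)=\|(s'-s)\ell_0\|_\bbH=|s-s'|$. So $\mathcal H^1_d$ restricted to this fibre is one-dimensional Lebesgue measure in the parameter $s$. It therefore suffices to show that $S:=\{s:\ w\cdot s\ell_0\in T\}$ has Lebesgue measure $\lesssim\delta$.

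\emph{Step 2: project to the horizontal plane.} Let $P(x,y,t)=(x,y)$. Since $\|z\|_\bbH\ge |(z_1,z_2)|$ and $P$ is a group homomorphism onto $(\R^2,+)$, we get $|P(q)-P(q')|\le d(q,q')$. Thus $P$ is $1$-Lipschitz from $(\bbH,d)$ to Euclidean $\R^2$.

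If $s\in S$, then $w\cdot s\ell_0$ is within Korányi distance $\delta$ of some core point $p\cdot\sigma e$ with $|\sigma|\le 1/2$. Applying $P$, the point $P(w)+s\ell_0$ lies in the Euclidean $\delta$-neighbourhood of the segment $\{P(p)+\sigma e:\ |\sigma|\le1/2\}$. So $S$ is contained in the set of parameters at which the planar line $P(w)+\R\ell_0$ meets the $\delta$-neighbourhood of a line with direction $e$.

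\emph{Step 3: planar transversality.} The hypothesis on $e$ is meant to ensure that $e$ makes an angle bounded below with $\ell_0$, i.e.\ $|a+b|\gtrsim1$, as in Lemma \ref{le:proj}. For two planar lines meeting at an angle $\theta$, the $\delta$-neighbourhood of one meets the other in an interval of length $\le 2\delta/\sin\theta\lesssim\delta$. Hence $|S|\lesssim\delta$, which proves $\mathcal H^1_d(T\cap\pi_\bbW^{-1}(\{w\}))\lesssim\delta$.

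\emph{Step 4: the second claim.} If $w\notin\pi_\bbW(T)$, then $T\cap\pi_\bbW^{-1}(\{w\})=\emptyset$, so the measure is $0$. Combining this with Step 3 gives \eqref{eq:lemma2}.

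The only delicate point is making sure the direction hypothesis gives a uniform lower bound on the angle between $e$ and $\bbL$. The condition as stated, $|\mathfrak v-e|\ge 1/2$, looks like it should be read as the condition $\dist(e,\bbL)\ge1/2$ used in Lemma \ref{le:proj}, and I would adopt that reading. Everything else is the explicit coset computation and the Lipschitz property of $P$.
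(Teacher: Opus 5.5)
Your proof is correct and follows the same route as the paper. You realise the fibre as the coset $w\cdot\bbL$; the paper uses this implicitly when it left-translates to $T_e^\delta(w^{-1}p)\cap\bbL$. You then pass to a planar $\delta$-strip via the $1$-Lipschitz map $P$ and conclude by transversality of $\bbL$ and $e$.

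Your reading of the hypothesis as $\dist(e,\bbL)\ge 1/2$ (equivalently $|a+b|\gtrsim 1$) is the right one. The literal condition $|\mathfrak v-e|\ge 1/2$ allows $e\parallel\bbL$, where the estimate fails (take $p=w=0$). The paper's final ``short direction'' step tacitly uses the same transversality.
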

\begin{proof}
   The second claim follows trivially from the first, since if $w\notin\pi_\bbW(T),$ then $T\cap\pi_\mathbb{W}^{-1}(\{w\})=\emptyset$. We now prove the first claim. Let $w=(\zeta,t')\in\pi_\bbW(T)$. Since $d$ is left-invariant, it follows immediately that $\mathcal{H}_d^1$ is also left-invariant. Therefore, \begin{gather*}
       \mathcal{H}_d^1\big(T\cap\pi_\mathbb{W}^{-1}(\{w\})\big)=\mathcal{H}_d^1\big(w^{-1}T\cap\mathbb{L}\big)=\mathcal{H}_d^1\big(T_e^\delta(w^{-1}p)\cap\mathbb{L}\big).
    \end{gather*}
    Since the map $s\mapsto(s,-s,0)$ is a quasi-isometry from $(\R,|\cdot|)$ to $(\R^3,d)$, we have
$$\mathcal{H}_d^1\big(T_e^\delta(w^{-1}p)\cap\mathbb{L}\big)\sim\mathcal{H}_{|\cdot|}^1(\{s\in\R:\ (s,-s,0)\in T_e^\delta(w^{-1}p)\}.$$
       Moreover, the map $P:(\mathbb{R}^3,d)\to(\mathbb{R}^2,|\cdot|)$ given by $ P(x,y,t)=(x,y)$ is $1$-Lipschitz which implies that $$P(T_e^\delta(w^{-1}p))\subseteq S:=[\xi-\zeta+\rm{span}\{P(e)\}]^\delta.$$
Thus,
    \begin{gather*}
       \mathcal{H}_{|\cdot|}^1(\{s\in\R:\ (s,-s,0)\in T_e^\delta(w^{-1}p)\}\leq\mathcal{H}_{|\cdot|}^1(\{s\in\mathbb{R}:\ (s,-s)\in S\}).
    \end{gather*}
        Since $|\mathfrak{v}-e|\geq1/2$, the line $\{(s,-s):\ s\in\R\}$ intersects $S$ roughly along its short direction. Therefore, $$\mathcal{H}_{|\cdot|}^1(\{s\in\mathbb{R}: (s,-s)\in S\})\lesssim\delta.$$
\end{proof}
\subsection{Reducing Theorem \ref{thm:main} to Theorem \ref{thm:kakeya}}
For each $T_j=T_{p(T_j)}^\delta(e)\in\mathcal{T}_j$, Lemma \ref{le:proj} implies that $\pi_\mathbb{W}(T_j\cap B_0)$ is contained in $\Gamma_{T_j}^{\delta^2}:=N_{c\delta^2}\big(\Gamma_{T_j}(I)\big)$. As one can see from the proof of Lemma \eqref{le:proj}, we also have \begin{equation*}
    \Gamma_{T_j}^{c\delta^2}\subseteq E_{T_j}^{c\delta^2}:= N_{c\delta^2}(Gr(f_{T_j})).
\end{equation*}  
For reasons that will become apparent shortly, we need the following lemma.
\begin{lemma}\label{le:bipartite}
    We divide $B_\tau(0,100)$ into $O(1)$ finitely overlapping $1$-balls. For each $j=1,2$ there exists a subfamily $\cT_j'\subseteq\cT_j$ and a popular $1$-ball $B_j$ such that \begin{gather}\label{eq:pigeon}
    \int\limits_{B_0}\Big(\sum_{T_1\in\mathcal{T}_1(B_0)}\chi_{T_1}\Big)^{3/4}\Big(\sum_{T_2\in\mathcal{T}_2(B_0)}\chi_{T_2}\Big)^{3/4}\sim\int\limits_{B_0}\Big(\sum_{T_1\in\mathcal{T}_1'}\chi_{T_1}\Big)^{3/4}\Big(\sum_{T_2\in\mathcal{T}_2'}\chi_{T_2}\Big)^{3/4}
    \end{gather}
and
    \begin{gather*}
        (\kappa(e(T)),m(p(T)),v(p(T)))\in B_j, \mbox{ for all } T\in\cT_j',\ j=1,2.
    \end{gather*}
    Here, $f_T,f_T',\kappa,m,v$ are as in Lemma \eqref{le:proj}.
\end{lemma}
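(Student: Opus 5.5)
The plan is a pigeonholing argument. Every tube $T\in\cT_j(B_0)$ comes with the parameter triple $\Phi(T):=(\kappa(e(T)),m(p(T)),v(p(T)))$. The first task is to check that $\Phi(T)$ lies in a fixed bounded set, so that it falls in one of the $O(1)$ finitely overlapping $1$-balls covering $B_\tau(0,100)$.

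For the bound on $\Phi(T)$: transversality gives $|e(T)-e_j|\le c$ with $c<\frac{\sqrt2-1}{2}$. This keeps $a+b$ bounded away from $0$, since $e(T)$ stays away from $\bbL$, and so $|\kappa(e)|=|a-b|/|a+b|\lesssim 1$. Because $T$ meets $B_0$, we may recentre $p(T)$ at a point of the core within bounded Korányi distance of the origin. Then $|x|,|y|,|t|\lesssim1$, which gives $|m|,|v|\lesssim1$. Identifying quadratics with coefficient triples, $\tau$ is comparable (with constants depending on $I$) to the Euclidean norm on $\R^3$. Hence, after possibly enlarging constants, $\Phi(T)\in B_\tau(0,100)$.

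Next, for each $j$ partition $\cT_j(B_0)=\bigcup_{k}\cT_{j,k}$, with $k$ ranging over the $O(1)$ balls $B^{(k)}$. Assign each tube to one ball containing $\Phi(T)$, breaking ties arbitrarily. Write $F_j=\sum_{T\in\cT_j(B_0)}\chi_T$ and $F_{j,k}=\sum_{T\in\cT_{j,k}}\chi_T$. Since $F_j=\sum_k F_{j,k}$ is a sum of $N=O(1)$ terms and $t\mapsto t^{3/4}$ is subadditive and monotone,
\[
F_1^{3/4}F_2^{3/4}\le \sum_{k,l}F_{1,k}^{3/4}F_{2,l}^{3/4}.
\]
Integrating over $B_0$, the left-hand side of \eqref{eq:pigeon} is at most $N^2$ times the largest of the $O(1)$ integrals $\int_{B_0}F_{1,k}^{3/4}F_{2,l}^{3/4}$. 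Choose the pair $(k,l)$ attaining this maximum, and set $\cT_1'=\cT_{1,k}$, $\cT_2'=\cT_{2,l}$, $B_1=B^{(k)}$, $B_2=B^{(l)}$. The reverse inequality is immediate from monotonicity, because $\cT_j'\subseteq\cT_j(B_0)$. This gives \eqref{eq:pigeon} with absolute implicit constants, and by construction $\Phi(T)\in B_j$ for all $T\in\cT_j'$.

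The main obstacle is bookkeeping rather than analysis. One must make sure the parameters $(\kappa,m,v)$ are well defined for each tube independently of the choice of centre on its core, or fix a canonical centre near $B_0$. One must also check that these parameters land in the bounded region. Reparametrising along the core changes $(m,v)$ by translating $\theta_0$, but the function $f_T$ itself is unchanged, so its $\tau$-norm on $I$ stays bounded whenever the graph passes near $\pi_\bbW(B_0)$. I would verify this first, because it is what makes the number of balls $O(1)$. With that in hand, the pigeonholing is routine.
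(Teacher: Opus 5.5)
Your proposal is correct and follows the paper's proof. Both split each $\mathcal{T}_j(B_0)$ according to which of the $O(1)$ unit balls contains $(\kappa(e(T)),m(p(T)),v(p(T)))$, then pigeonhole among the $O(1)$ pairs of subfamilies.

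Two of your steps are left implicit in the paper: the subadditivity bound $F_1^{3/4}F_2^{3/4}\le\sum_{k,l}F_{1,k}^{3/4}F_{2,l}^{3/4}$, and the check that the parameter triples are bounded. One small correction: no recentring is needed, since the centre of a unit-length tube meeting $B_0$ already lies within Kor\'anyi distance about $2$ of the origin.
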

\begin{proof}
    We split the tubes $T_j\in\cT_j$ into subfamilies depending on which 1-ball contains the point $(\kappa(e(T_j)),m(p(T_j)),v(p(T_j)))$, and write the integral on the left-hand side of \eqref{eq:pigeon} as a sum over all all contributions coming from these $O(1)$ subfamilies. By dyadic pigeonholing, there exist $\cT_1',\cT_2'$ whose corresponding contribution is at least an $O(1)$ proportion of the left-hand side of \eqref{eq:pigeon}.
\end{proof}
By Lemma \ref{le:bipartite} and Cauchy-Schwarz, we have 
\begin{gather*}
    \int\limits_{B_0}\Big(\sum_{T_1\in\mathcal{T}_1(B_0)}\chi_{T_1}\Big)^{3/4}\Big(\sum_{T_2\in\mathcal{T}_2(B_0)}\chi_{T_2}\Big)^{3/4}\sim\int\limits_{B_0}\Big(\sum_{T_1\in\mathcal{T}_1'}\chi_{T_1}\Big)^{3/4}\Big(\sum_{T_2\in\mathcal{T}_2'}\chi_{T_2}\Big)^{3/4}\\
    =\int\limits_{\mathbb{W}}\int\limits_{\mathbb{L}}\Big(\sum_{T_1\in\mathcal{T}_1'}\chi_{T_1}(w\cdot l)\Big)^{3/4}\Big(\sum_{T_2\in\mathcal{T}_2'}\chi_{T_2}(w\cdot l)\Big)^{3/4}\chi_{B_0}(w\cdot l)\ dl\ dw\\
    \leq\int\limits_{\mathbb{W}\cap B_0}\Bigg(\int\limits_{\mathbb{L}\cap B_0}\Big(\sum_{T_1\in\mathcal{T}_1'}\chi_{T_1}(w\cdot l)\Big)^{3/2}dl\Bigg)^{1/2}\Bigg(\int\limits_{\mathbb{L}\cap B_0}\Big(\sum_{T_2\in\mathcal{T}_2'}\chi_{T_2}(w\cdot l)\Big)^{3/2}dl\Bigg)^{1/2}dw.
\end{gather*}
For each $w\in\mathbb{W},\ j=1,2$ we have 
$$\int\limits_{\mathbb{L}}\Big(\sum_{T_j\in\mathcal{T}_j'}\chi_{T_j}(w\cdot l)\Big)^{3/2}dl\leq\Big(\sum_{T_j\in\mathcal{T}_j'}\chi_{\Gamma_{T_j}^{\delta^2}}(w)\Big)^{1/2}\sum_{T_j\in\mathcal{T}_j'}\mathcal{H}_d^1\big(T_j\cap\pi_\mathbb{W}^{-1}(w)\big)\lesssim\delta\Big(\sum_{T_j\in\mathcal{T}_j'}\chi_{\Gamma_{T_j}^{\delta^2}}(w)\Big)^{3/2},$$
where in the first inequality we used the fact that $\chi_{T_j}(w\cdot l)\leq\chi_{\pi_{\bbW}(T_j\cap B_0)}(w)\leq \chi_{\Gamma_{T_j}^{\delta^2}}(w)$ for $w\in\bbW,\ l\in\bbL$, and in the second one we used (\ref{eq:lemma2}) from Lemma \ref{le:haus}. Hence, we obtain 
\begin{gather*}
    \int\limits_{B_0}\Big(\sum_{T_1\in\mathcal{T}_1'}\chi_{T_1}\Big)^{3/4}\Big(\sum_{T_2\in\mathcal{T}_2'}\chi_{T_2}\Big)^{3/4}\leq\delta\int\limits_{\mathbb{W}\cap B_0}\Big(\sum_{T_1\in\mathcal{T}_1'}\chi_{\Gamma_{T_1}^{\delta^2}}(w)\Big)^{3/4}\Big(\sum_{T_2\in\mathcal{T}_2'}\chi_{\Gamma_{T_2}^{\delta^2}}(w)\Big)^{3/4}dw\\
    \leq\delta\int\limits_{\mathbb{W}\cap B_0}\Big(\sum_{T_1\in\mathcal{T}_1'}\chi_{E_{T_1}^{\delta^2}}(w)\Big)^{3/4}\Big(\sum_{T_2\in\mathcal{T}_2'}\chi_{E_{T_2}^{\delta^2}}(w)\Big)^{3/4}dw
\end{gather*} and so (\ref{eq:local}) is reduced to proving
\begin{gather}\label{eq:reduced}
    \int\limits_{\mathbb{W}\cap B_0}\Big(\sum_{T_1\in\mathcal{T}_1'}\chi_{E_{T_1}^{\delta^2}}(w)\Big)^{3/4}\Big(\sum_{T_2\in\mathcal{T}_2'}\chi_{E_{T_2}^{\delta^2}}(w)\Big)^{3/4}dw\\
    \lesssim_\epsilon\delta^{3-\epsilon}\Big[\#\mathcal{T}_1'^{3/4}\#\mathcal{T}_2'^{3/4}+\#\mathcal{T}_1'+\#\mathcal{T}_2'\Big].
\end{gather}
Next, we show that transversality of $\mathcal{T}_1'$ and $\mathcal{T}_2'$ implies that $\{f_{T_1}: T_1\in\cT_1\},\{f_{T_2}:\ T_2\in\cT_2\}$ form a $1$-bipartite pair. 
\begin{lemma}
    Let $\cT_1,\cT_2$ be as in the statement of Theorem \ref{thm:main}. Then there exist $\cT_1'\subseteq\cT_1(B_0),\cT_2'\subseteq\cT_2(B_0)$ such that $\#\cT_1'\sim\#\cT_1(B_0),\ \#\cT_2'\sim\#\cT_2(B_0)$ and for every $T,T'\in\cT_1'\cup\cT_2'$, we have 
    \begin{gather*}
        \tau(f_{T},f_{T'})\leq 1, \mbox{ if } T,T' \mbox{ belong to the same family},\\
        1\leq \tau(f_{T},f_{T'})\leq 100, \mbox{ if } T,T' \mbox{ belong to different families},
    \end{gather*}
\end{lemma}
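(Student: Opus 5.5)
We compute $\tau$ directly from the explicit form of $f_T$ in Lemma \ref{le:proj}. Write
\[
f_T(\theta)=\kappa(\theta-\theta_0)^2+m(\theta-\theta_0)+v = \kappa\theta^2+(m-2\kappa\theta_0)\theta+(\kappa\theta_0^2-m\theta_0+v),
\]
so $f_T''\equiv 2\kappa(e(T))$. Since all quantities are polynomials in $\theta$ on the compact interval $I$, $\tau(f_T,f_{T'})$ is comparable, up to absolute constants, to the Euclidean distance between the coefficient triples. The parameters satisfy $|\kappa|\lesssim 1$ (because $|a+b|\ge 1/2$), and $|\theta_0|,|m|,|v|\lesssim 1$ since $p\in B_0$. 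Hence all $f_T$ lie in a fixed $\tau$-ball of radius $O(1)$, which can be taken inside $B_\tau(0,100)$.

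\textbf{Lower bound across families.} Use only the second derivative: $\tau(f_T,f_{T'})\ge 2|\kappa(e(T))-\kappa(e(T'))|$.
\begin{itemize}
\item For $e=(a,b,0)$ with $|e-e_1|\le c$, we have $\kappa=\frac{a-b}{a+b}$ close to $1$.
\item For $|e-e_2|\le c$, we have $\kappa$ close to $-1$.
\end{itemize}
Quantitatively, $|e-e_j|\le c$ with $c<\frac{\sqrt2-1}{2}$ keeps $a+b$ bounded away from $0$ and gives $\kappa\ge\frac{1-2c}{1+2c}$ in the first family and $\kappa\le-\frac{1-2c}{1+2c}$ in the second. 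So $|\kappa_1-\kappa_2|\ge 2\frac{1-2c}{1+2c}$, and this is $\ge 1/2$ for $c$ in the stated range. Hence $\tau\ge 1$ between families. The upper bound $\tau\le 100$ follows from the $O(1)$ bounds on all parameters over $I=[-10,10]$; if the constants are too large, rescale the interval or the normalisation.

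\textbf{Upper bound within a family.} This does not hold for arbitrary pairs, because $m$ and $v$ within one family can differ by $O(1)$. We therefore pigeonhole. Cover the $O(1)$-sized parameter region by $O(1)$ balls in coefficient space, each of $\tau$-diameter at most $1$, exactly as in Lemma \ref{le:bipartite}. Choose the ball containing the most $f_T$, $T\in\cT_j(B_0)$, and call this subfamily $\cT_j'$. Then $\#\cT_j'\gtrsim\#\cT_j(B_0)$ and $\tau\le 1$ within $\cT_j'$.

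The main obstacle is the bookkeeping of constants. The cells must have $\tau$-diameter $\le 1$ while their number stays $O(1)$; this is fine since the region is bounded. The separation $|\kappa_1-\kappa_2|$ must also dominate $1$ in the normalisation of $\tau$ on $I=[-10,10]$. If the constant falls short, we rescale $\rho$, replacing $1$ by a suitable absolute constant, which is harmless for the application of Theorem \ref{thm:kakeya}.
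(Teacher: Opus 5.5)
Your proposal is essentially the paper's proof. It pigeonholes into $O(1)$ small parameter balls for the within-family bound. It uses $\tau(f_T,f_{T'})\ge|f_T''-f_{T'}''|=2|\kappa(e(T))-\kappa(e(T'))|$, with $\kappa$ near $1$ on the first family and near $-1$ on the second, for the cross-family lower bound. It uses the $O(1)$ parameter bounds for the cross-family upper bound.

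If anything, your version is more careful than the paper's in three places:
\begin{enumerate}
\item You pigeonhole in coefficient space. The paper's balls in $(\kappa,m,v)$ ignore the dependence of $f_T$ on $\theta_0$.
\item You pigeonhole by cardinality, which gives $\#\cT_j'\sim\#\cT_j(B_0)$. The paper's integral-preserving choice from Lemma~\ref{le:bipartite} does not by itself give this.
\item You correctly flag that the constant $100$ in the cross-family upper bound only holds up to an absolute factor.
\end{enumerate}
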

\begin{proof}
By construction, we have
\begin{gather*}
    \tau(f_T,f_{T'})\leq1, \mbox{ for all } T,T'\in\cT_j',\ j=1,2.
\end{gather*}
Note that if $h(s)=as^2+bs+c,$ then
\begin{equation*}
    \tau(h,0)\lesssim|I|^2(|a|+|b|+|c|)+|I|(2|a|+|b|)+2|a|\lesssim|a|+|b|+|c|.
\end{equation*}
Moreover, since $h''$ is constant, we have $\tau(h,0)\geq|h''|=2|a|$. Recall from Lemma \ref{le:proj} that 
\begin{equation*}
    f_T(\theta)=\kappa(e(T))(\theta-\theta_0(p(T)))^2+m(p(T))(\theta-\theta_0(p(T)))+v(T)
\end{equation*}
and the maps $\kappa,v,m$  are locally Lipschitz with respect to the Euclidean metric. For every $T\in\cT_1', T'\in\cT_2'$, we have
\begin{gather*}
    \tau(f_T,f_T')\lesssim|\kappa(e(T))-\kappa(e(T'))|+|m(p(T))-m(p(T'))|+|v(p(T))-v(p(T'))|\leq100.
\end{gather*}
and $|e(T)-e_1|,|e(T')-e_2|\leq c$. since $\kappa(e_1)=1,\kappa(e_2)=-1$, by choosing $c$ sufficiently small we can ensure that $\kappa(e(T))\in[1/2,3/2]$ and
$\kappa(e(T'))\in[-3/2,-1/2]$. Therefore
\begin{gather*}
    \tau(f_T,f_{T'})\geq|f_T''-f_{T'}''|=2|\kappa(e(T))-\kappa(e(T'))|\geq 1.
\end{gather*}
\end{proof}
To close this section, we show that the non-concentration condition for the Heisenberg tubes implies the non-concentration condition for their projections. This will complete the reduction of Theorem \ref{thm:main} to Theorem \ref{thm:kakeya}.
\begin{lemma}\label{le:jet}
    Let $0<\delta\leq t\leq1,\ \theta\in[0,1]$ and let $J$ be the interval of length $\sqrt{\frac{\delta}{t}}$ centred at $\theta$. Let $R=g^\delta(J)$ be a $(\delta,t)$-rectangle and $f$ be a quadratic polynomial. Then $f$ is tangent to $R$ if and only if  
    \begin{gather}\label{eq:jet}
        |f(\theta)-g(\theta)|\lesssim\delta,\ |f'(\theta)-g'(\theta)|\lesssim\sqrt{\delta t},\ |f''(\theta)-g''(\theta)|\lesssim t.
    \end{gather}
\end{lemma}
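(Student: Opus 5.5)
The plan is to use the fact that $h:=f-g$ is itself a quadratic polynomial, so Taylor's formula about $\theta$ is exact:
\[
h(\theta+u)=h(\theta)+h'(\theta)u+\tfrac12 h''u^2,
\]
where $h''$ is constant. Write $L:=\sqrt{\delta/t}$ for the length of $J$, so that $|u|\leq L/2$ for $\theta+u\in J$. Note that $L^2=\delta/t$, $L\sqrt{\delta t}=\delta$ and $tL^2=\delta$; these three identities make the jet scales in \eqref{eq:jet} exactly the natural ones. Throughout I will use that tangency $R=g^\delta(J)\subseteq f^{C\delta}(J)$ is equivalent, up to changing $C$, to $\sup_{u\in J-\theta}|h(\theta+u)|\lesssim\delta$. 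This holds because $f,g$ have bounded slopes on $I$ (they lie in $B_\tau(0,100)$ in the application), so Euclidean neighbourhoods of graphs are comparable to vertical neighbourhoods. Near the endpoints of $J$, one compares on a slightly shrunken interval such as $J/2$, which only affects constants.

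\textbf{Sufficiency.} Assume \eqref{eq:jet}. For $|u|\le L/2$ the exact expansion gives
\[
|h(\theta+u)|\lesssim \delta+\sqrt{\delta t}\,L+tL^2\lesssim\delta.
\]
Hence every point of the graph of $g$ over $J$ lies vertically within $O(\delta)$ of the graph of $f$, and therefore $R\subseteq f^{C\delta}(J)$ for suitable $C$.

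\textbf{Necessity.} Assume $\sup_{|u|\le L/2}|h(\theta+u)|\lesssim\delta$. Evaluate at the three nodes $u=0,\pm L/2$, giving values $h_0,h_\pm$ all of size $O(\delta)$. Then:
\begin{itemize}
\item $|h(\theta)|=|h_0|\lesssim\delta$;
\item $h'(\theta)=(h_+-h_-)/L$, so $|h'(\theta)|\lesssim\delta/L=\sqrt{\delta t}$;
\item $h''=4(h_++h_--2h_0)/L^2$, so $|h''|\lesssim\delta/L^2=t$.
\end{itemize}
This is Lagrange interpolation, equivalently the equivalence of norms on the three-dimensional space of quadratics after rescaling $J$ to unit length.

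The only point needing care is the geometric translation between Euclidean neighbourhoods and vertical distances. One must check that $g^\delta(J)\subseteq f^{C\delta}(J)$ forces $|h|\lesssim\delta$ at the nodes, up to losing the edges of $J$. If the endpoints cause trouble, I would use the nodes $0,\pm L/4$ instead, which changes only constants. The implicit constants depend on the tangency constant $C$ and on the slope bounds. I expect no real obstacle beyond this bookkeeping.
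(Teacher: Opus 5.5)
Your proof is correct and follows the same route as the paper: you reduce tangency to $\sup_J|f-g|\lesssim\delta$ and use the exact second-order Taylor expansion of the quadratic $h=f-g$ about $\theta$, with the scale identities $L\sqrt{\delta t}=tL^2=\delta$ giving sufficiency. For necessity, the paper only writes down the expansion and asserts the three bounds, so your evaluation at the nodes $\theta,\theta\pm L/2$ (Lagrange interpolation) supplies the missing justification; your remark that comparing Euclidean and vertical neighbourhoods needs bounded slopes (e.g.\ $f,g\in B_\tau(0,100)$) makes explicit an assumption the paper uses without stating it.
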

\begin{proof}
    Let $h=f-g$. Suppose that $f\sim R$. By definition of tangency, $g^\delta(J)\subseteq f^{C\delta}(J)$ and thus
    $$|h(s)|\leq2\delta,\ \mbox{ for all}\ s\in J.$$
    Thus, $|h(\theta)|\lesssim\delta$ is evidently true. Since $h$ is a quadratic polynomial, we can write
    $$h(s)=h(\theta)+h'(\theta)(s-\theta)+\frac{h''(\theta)}{2}(s-\theta)^2$$
    $$|f(\theta)-g(\theta)|\leq\delta,\ |f'(\theta)-g'(\theta)|\leq\sqrt{\delta t},\ |f''(\theta)-g''(\theta)|\leq t.$$
    Conversely, if \eqref{eq:jet} holds, then by the Taylor expansion of $h$ around $\theta$ we immediately get  $|h(s)|\lesssim\delta$ for all $s\in J$, and thus $R=h^\delta(J)\subseteq f^{C\delta}(J)$
\end{proof}
\begin{proposition}
    Let $C>0$ be an absolute constant and let $\delta,\alpha>0$. Let $\cL$ be a family of unit horizontal line segments that is $(\delta,\alpha)$-broad and such that $\ell\cap B_d(0,C)\neq\emptyset$ for all $\ell\in\cL$. Let $\mathcal{Q}:=\{\pi_\bbW(\ell):\ \ell\in\cL\}$. Then $\mathcal{Q}$ is $(\delta^2,\alpha)$-broad.
\end{proposition}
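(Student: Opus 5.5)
We aim to show directly that for every $\sigma'\in[\delta^2,1]$, $t\in[\sigma',1]$ and every $(\sigma',t)$-rectangle $R=g^{\sigma'}(J)$ we have
\[
\#\{f\in\mathcal{Q}:\ f\sim R\}\lesssim 1+t^\alpha\#\mathcal{Q}.
\]
Here we identify each $\pi_\bbW(\ell)$ with the quadratic $f_\ell(\theta)=\kappa(e(\ell))(\theta-\theta_0)^2+m(\theta-\theta_0)+v$ from Lemma \ref{le:proj}.

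\textbf{Step 1: translate tangency into jet conditions.} Let $\theta$ be the centre of $J$, so $|J|=\sqrt{\sigma'/t}$. By Lemma \ref{le:jet}, $f_\ell\sim R$ is equivalent to
\[
|f_\ell(\theta)-g(\theta)|\lesssim\sigma',\quad |f_\ell'(\theta)-g'(\theta)|\lesssim\sqrt{\sigma' t},\quad |f_\ell''(\theta)-g''(\theta)|\lesssim t.
\]
Since $f_\ell''=2\kappa(e(\ell))$ and $\kappa(a,b)=(a-b)/(a+b)$ is bi-Lipschitz on the allowed arc of directions (away from $\bbL$), the third condition places $e(\ell)$ in an arc $\Omega\subset\bbS^1$ with $|\Omega|\lesssim t$.

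\textbf{Step 2: interpret the first two conditions in $\bbH$.} Set $\sigma:=\sqrt{\sigma'}\in[\delta,1]$. We claim the first two conditions, together with the one on directions, force $T_\ell^{C\sigma}$ to meet a fixed Korányi ball $B_d(z,C\sigma)$. The natural choice is $z=$ a point of $\pi_\bbW^{-1}(\theta,g(\theta))$ on the horizontal line through it whose projected jet is $(g(\theta),g'(\theta))$. The point $z$ is the contact point in $\bbH$: the zeroth- and first-order jet of $\pi_\bbW(\ell)$ at $\theta$ determines the point of $\ell$ lying over $\theta$ modulo the fibre $\bbL$. Jet agreement to order $(\sigma^2,\sigma\sqrt t)\subseteq(\sigma^2,\sigma)$ then means the lifts are within Korányi distance $\lesssim\sigma$, by the standard ball-box estimate $d\sim|\Delta x|+|\Delta y|+|\Delta t|^{1/2}$ applied after left-translating $z$ to $0$. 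I expect this to be the main obstacle. The fibre direction $\bbL$ is not controlled by the projection, so I would choose $z$ using that the lines in $\cL$ meet $B_d(0,C)$ and have directions away from $\bbL$: two lines with close projected jets and close directions genuinely pass near each other, rather than merely projecting near each other. To do this I would compute $\pi_\bbW(\ell)$ explicitly via the formula for $\gamma_T$ and invert the map $(p,e)\mapsto(\kappa,m,v)$ locally, tracking the scaling $\delta_\sigma$.

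\textbf{Step 3: apply broadness of $\cL$ and conclude.} Once Steps 1 and 2 are in hand,
\[
\{\ell:\ f_\ell\sim R\}\subseteq\{\ell:\ T_\ell^\sigma\cap B_d(z,C\sigma)\neq\emptyset,\ e(\ell)\in\Omega\}.
\]
Definition \ref{def:bush} then bounds the right-hand side by
\[
1+|\Omega|^\alpha\,\#\{\ell:\ T_\ell^\sigma\cap B_d(z,C\sigma)\neq\emptyset\}\le 1+Ct^\alpha\#\cL.
\]
Finally, $\#\cL=\#\mathcal{Q}$ up to multiplicity, and the $\delta^2$-separation of $\mathcal{Q}$ is inherited after the pigeonholing of Lemma \ref{le:bipartite}. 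The constant $C$ in the ball is absorbed by covering $B_d(z,C\sigma)$ with $O(1)$ balls if needed.
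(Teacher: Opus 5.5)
Your Steps 1 and 3 match the paper's argument. Step 2, however, carries the entire content of the proof, and you only sketch it. The reason you give for why it is hard is also mistaken. It is not true that the fibre direction $\bbL$ is left uncontrolled by the projection: the slope of the projected parabola \emph{is} the $\bbL$-coordinate. Take $q=p\cdot se\in\ell$ with $p=(x,y,t)$, $e=(a,b,0)$, $q=(q_1,q_2,q_3)$ and $\theta=\theta_0(p)+\tfrac{a+b}{2}s$. The formula in Lemma \ref{le:proj} gives
\[
f_\ell'(\theta)=2\kappa(e)\big(\theta-\theta_0(p)\big)+m(p)=(a-b)s+(x-y)=q_1-q_2 .
\]
Since every point factors as $q=\pi_\bbW(q)\cdot\tfrac{q_1-q_2}{2}(1,-1,0)$, this yields $q=w_{\theta,f_\ell(\theta)}\cdot\big(\tfrac12 f_\ell'(\theta)(1,-1,0)\big)$. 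So the $1$-jet of $f_\ell$ at $\theta$ determines the point of $\ell$ over $\theta$ completely, not just modulo the fibre. Your ball-box heuristic is right, but it presupposes exactly this lift formula, which you doubt. The substitute you propose cannot replace it. That every $\ell$ meets $B_d(0,C)$ localises the fibre coordinate only to $O(1)$, not to $O(\sigma)$. The condition that directions stay away from $\bbL$ only lets you parametrise $\ell$ by $\theta$.

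With the identity, Step 2 takes two lines, and this is the paper's route. Take $z:=w_{\theta,g(\theta)}\cdot\big(\tfrac12 g'(\theta)(1,-1,0)\big)$, which is essentially the paper's $p^*$. The two $\bbW$-factors share their first coordinate, so $w_{\theta,g(\theta)}^{-1}w_{\theta,f_\ell(\theta)}$ is central. Hence $z^{-1}q=(r,-r,f_\ell(\theta)-g(\theta))$ with $r=\tfrac12(f_\ell'(\theta)-g'(\theta))$, and
\[
d(q,z)\sim|f_\ell'(\theta)-g'(\theta)|+|f_\ell(\theta)-g(\theta)|^{1/2}\lesssim\sqrt{\sigma' t}+\sqrt{\sigma'}\lesssim\sigma .
\]
Your Step 3 bounds the localised count by $\#\cL$ anyway, so you can skip the localisation altogether. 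Every $\ell$ meets $B_d(0,C)$, so Definition \ref{def:bush} with $\sigma=1$ and $z=0$ gives $\#\{\ell:\ e(\ell)\in\Omega\}\lesssim 1+|\Omega|^\alpha\#\cL$ (cover by $O(1)$ balls if the two constants $C$ differ). Step 1 alone then finishes the proof. This shortcut also avoids having to check that the lifted point $q$ lies on the unit segment rather than on its extension, a point the paper does not address. Finally, your claim that the $\delta^2$-separation of $\cQ$ is ``inherited after pigeonholing'' is unjustified, since pigeonholing does not create separation. The paper does not address this either.
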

\begin{proof}
    For each $\ell\in\cL$ let $f_\ell$ be the quadratic such that $\pi_\bbW(\ell)=Gr(f_\ell)$. Let $\sigma\in[\delta^2,1],t\in[\delta^2,1]$ and let $R=g^{\sigma}(J)$ be a $(\sigma,t)$-rectangle. We want to show that 
    \begin{gather}
        \#\{\ell\in\cL:\ f_\ell\sim R\}\lesssim t^\alpha\#\cL.
    \end{gather}
    Let $\theta^*$ be the midpoint of $J$ and $$v^*:=g(\theta^*),\ m^*:=g'(\theta^*),\ \kappa^*:=g''.$$ Let $\ell\in\cL$ be such that $f_\ell\sim R$ and let $T=T_\ell^{\sqrt{\sigma}}=T_{e(T)}^{\sqrt{\sigma}}(p(T))$ be the $\sqrt{\sigma}$-neighbourhood of $\ell$. Note that $f_\ell$ and $f_T$ from Lemma \ref{le:proj} coincide (up to translation of the argument). Thus, $f_T\sim R$ and Lemma \ref{le:jet} yields
    \begin{gather}\label{eq:jet1}
        |f_T(\theta^*)-v^*|\leq\sigma,\ |{f_T}'(\theta^*)-m^*|\leq \sqrt{ \sigma t},\ |{f_T}''-\kappa^*|\leq t.
    \end{gather}
    We will show that there exists an absolute constant $C>0$ and a point $p^*\in B_d(0,C)$ such that for every $\ell\in\cL$ with $f_\ell\sim R$, $T^{\sqrt{\sigma}}_\ell$ intersects $B_d(p^*,C\sqrt{\sigma})$. We first identify $\bbW $ with $\R^2$ via $w_{X,Y}\equiv (X,Y)$. Let $p^*=w_{\theta^*,v^*}\cdot(-m^* e) $, where $e=(1,-1,0)$. We have $\|p^*\|_\bbH\lesssim1$. Let $s^*$ be such that $\theta^*=\theta(s^*)$ (using the parametrisation from the statement of Lemma \ref{le:proj}). Fix $\ell\in\cL$ and let $T=T^\sigma_\ell=T^\sigma_{e(T)}(p(T))$. We claim that
    $$p(T)\cdot s^* e(T)=w_{\theta^*,f_T(\theta^*)}\cdot (f_T'(\theta^*)\ e). $$
    Indeed, by definition of $\gamma_T$ and $f_T$ (in Lemma \ref{le:proj}) we have
    $$\pi_\bbW (p(T)\cdot s^*e(T))=\gamma_T(s^*)=(\theta^*,f_T(\theta^*))\equiv w_{\theta^*,f_T(\theta^*)}, $$
    Since $\pi_\bbW$ is smooth with bounded (above and below) Jacobian, it is locally Lipschitz with respect to the Euclidean metric. Thus, by the triangle inequality and (\ref{eq:jet1}) we get
    \begin{gather*}
        d(p(T)\cdot s^* e(T),p^*)=d(w_{\theta^*,f_T(\theta^*)}\cdot(f_T'(\theta^*)\ e), w_{\theta^*,v^*}\cdot(-m^* e))\\
        \leq d(w_{\theta^*,f_T(\theta^*)}\cdot(f_T'(\theta^*)\ e), w_{\theta^*,v^*}\cdot(f_T'(\theta^*)\ e))+d(w_{\theta^*,v^*}\cdot(f_T'(\theta^*)\ e), w_{\theta^*,v^*}\cdot(-m^* e))\\
        \leq |w_{\theta^*f_T(\theta^*)}\cdot(f_T'(\theta^*)\ e)- w_{\theta^*,v^*}\cdot(f_T'(\theta^*)\ e)|^{1/2}+|f_T'(\theta^*)-m^*|\\
        \leq |\pi_\bbW(w_{\theta^*,f_T(\theta^*)}\cdot(f_T'(\theta^*)\ e))-\pi_\bbW(w_{\theta^*,v^*}\cdot(f_T'(\theta^*)\ e))|^{1/2}+|f_T'(\theta^*)-m^*|\\
        =|w_{\theta^*,f_T(\theta^*)}-w_{\theta^*,v^*} |^{1/2}+|f_T'(\theta^*)-m^*|\\
        =|f_T(\theta^*)-v^*|^{1/2}+|f_T'(\theta^*)-m^*|\lesssim\sigma+\sqrt{\sigma t}\lesssim\sqrt{\sigma},
    \end{gather*}
    so that $p(T)\cdot s^* e(T)\in B_d(p^*,C\sqrt{\sigma})$. We simply observe that $p(T)\cdot s^* e(T)$ lies on the core of $T$, which is $\ell$ by definition. Therefore, $\ell\cap B_d(p^*,C\sqrt{\sigma})\neq\emptyset$ for every $\ell\in\cL$.\\\\
    Since $|f_\ell''-\kappa^*|\leq t$ for all $\ell\in\cL$ and the map $\kappa(a,b)=\frac{a-b}{a+b}$ is bi-Lipschitz, there exists an arc $\Omega\subset\bbS^1$ of length $\sim t$ such that  $e_\ell\in\Omega$ for all $\ell\in\cL$. Explicitly, $\Omega:=\{e\in\bbS^1:\ |\kappa(e)-\kappa^*|\leq t\}$. Therefore,
    $$\{\ell\in\cL:\ f_\ell\sim R\}\subseteq\{\ell\in\cL:\ e_\ell\in\Omega\}.$$
    Since $\sqrt{\sigma}\in[\delta,1]$, we may apply the $(\delta,\alpha)$-broadness of $\cL$ to $B(p^*,C\sqrt{\sigma})$ and $\Omega$ to get
    $$\#\{\ell\in\cL:\ \ell\cap B_d(p^*,C\sqrt{\sigma})\neq\emptyset,\  e(T)\in\Omega\}\lesssim t^{\alpha}\#\cL.$$
\end{proof}
\section{A Bilinear Kakeya Estimate for Parabolas}\label{sec:3}
The goal in this section is to prove Theorem \ref{thm:kakeya}. We do that by reducing \eqref{eq:parabola} to an incidence estimate for curvilinear rectangles and applying a rectangle-counting lemma from \cite{Zahl}. The proof involves numerous dyadic pigeonholing steps, whereby we iteratively choose a quantitative feature of $(\cF,\cG)$ (e.g. how many pairs $(f,g)$ meet at each point) and break down the left-hand side of \eqref{eq:parabola} into contributions where the size of the chosen feature is roughly constant. We then find a ``typical" size of that feature, the contribution of which is a $\log (\delta^{-1})$ proportion of the left-hand side of \eqref{eq:parabola}. The first dyadic decomposition is applied directly to the left-hand side of \eqref{eq:parabola}. Each subsequent decomposition is applied on the dominant contribution of the previous step. In this way, we retain the constant property of each feature in every subsequent step. Many arguments in this section are inspired by \cite{PYZ}.
\subsection{Tangencies of Planar Curves and Incidence Counting for Curvilinear Rectangles}\label{sec:stage2}
We are reduced to proving a Kakeya-type estimate for $\delta$-neighbourhoods of parabolas.  In what follows, $f$ and $g$ will be quadratic polynomials defined over the interval $I=[-5,5]$. We define the tangency gauge
\[
\Delta(f, g) := \inf_{\theta \in I}\Big[ |f(\theta) - g(\theta)| + |f'(\theta) - g'(\theta)|\Big].
\]
The tangency gauge $\Delta$ first appeared in \cite{Wolff}. It measures the size of the intersection between the $\delta$-neighbourhoods of two parabolic arcs. Note that $\Delta(f, g)$ is a pseudo-metric,
since it is symmetric and satisfies the triangle inequality.  As an immediate consequence of the definition of $\Delta$, we have
\begin{equation*}
    \Delta(f,g)\leq \tau(f,g).
\end{equation*}
The following is a special case of Lemma 3.18 in \cite{PYZ}:
\begin{lemma}\label{le: rect}
    The set $A_\delta:=\{s\in I/4:\ |f(s)-g(s)|\leq\delta\}$ is either empty or the union of at most two intervals of length $O\big(\delta/\sqrt{(\tau(f,g)+\delta)(\Delta(f,g)+\delta)}\Big)$. Moreover, if there exists $\theta\in I/4$ such that $|f(\theta)-g(\theta)|\leq\delta/2$ and $J$ is the interval constituting $A_\delta$ that contains $\theta$, then $|J|\gtrsim\delta/\sqrt{(\tau(f,g)+\delta)(\Delta(f,g)+\delta)}$ and if $a,b$ are the endpoints of $J$, then $\dist(\theta,\{a,b\})\gtrsim\delta/\sqrt{(\tau(f,g)+\delta)(\Delta(f,g)+\delta)}$. 
\end{lemma}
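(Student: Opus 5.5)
The plan is to work directly with the quadratic $h:=f-g$, writing $\tau:=\tau(f,g)$ and $\Delta:=\Delta(f,g)$. On the bounded interval $I$ we have $|h''|\lesssim\tau$, and the quantity $|h|+|h'|$ is $\ge\Delta$ everywhere on $I$ and attains $\Delta$ at some $\theta_1\in I$. We split according to the size of the constant $h''=2a$.

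\emph{Structure of $A_\delta$.} Since $h$ is a polynomial of degree at most two, the set $\{|h|\le\delta\}$ is the preimage of an interval under a quadratic. It is therefore a union of at most two intervals: the endpoints solve $h=\pm\delta$, which gives at most four roots. Intersecting with $I/4$ preserves this. If $h$ is affine or constant, the set is a single interval.

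\emph{Length bound.} Let $J$ be one of these intervals, with $|J|=L$. The key step is a Taylor/Remez-type argument. For a quadratic with $|h|\le\delta$ on an interval of length $L$, Markov's inequality gives $|h'|\lesssim\delta/L$ and $|h''|\lesssim\delta/L^2$ on $J$. We also extend the control off $J$: since $h$ is quadratic, for $s\in I$ we have
\[
|h'(s)|\le|h'(s_J)|+|h''||s-s_J|\lesssim\delta/L+\tau,
\]
and similarly for $h$.

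We then compare with $\Delta$ and $\tau$, first in the case where $\tau$ is comparable to $|h''|$. Evaluating at a point of $J$ gives $\Delta\le|h|+|h'|\lesssim\delta+\delta/L$. This alone is too weak. The sharper route is the standard quadratic normal form: write $h(s)=a(s-s_0)^2+c$ with $s_0$ the critical point.
\begin{itemize}
\item If $|a|\gtrsim\tau$, then $\Delta\sim|c|+\text{(derivative term)}$ and $\tau\sim|a|+|c|+|a||s_0|\cdots$. The sublevel set $\{|a(s-s_0)^2+c|\le\delta\}$ then has components of length $\lesssim\delta/\sqrt{|a|(|c|+\delta)}$ when $|c|\ge2\delta$, and $\lesssim\sqrt{\delta/|a|}$ otherwise.
\item If instead $\tau$ is dominated by the lower-order coefficients, meaning $|a|\ll\tau$, then $|h'|\gtrsim\tau$ on $I/4$ (this is where shrinking to $I/4$ is used). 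In that case $L\lesssim\delta/\tau$ and $\Delta\lesssim\tau$.
\end{itemize}
In both cases we obtain $L\lesssim\delta/\sqrt{(\tau+\delta)(\Delta+\delta)}$. The main obstacle is bookkeeping the regime in which the minimiser of $|h|+|h'|$ lies outside $I/4$ or near $s_0$. There one must check that $\Delta\lesssim|c|+\delta$ in the first case, and $\Delta\lesssim\tau$ with $\Delta\gtrsim|h(s)|$ in the second. Rather than reproving everything, I would reduce to this by quoting Lemma 3.18 of \cite{PYZ}, where the general $C^2$ version is proved, and verify that its hypotheses hold for quadratics, which is trivial since $h''$ is constant.

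\emph{Lower bound.} Suppose $|h(\theta)|\le\delta/2$ with $\theta\in I/4$. On the interval around $\theta$ of radius $r$ we have
\[
|h(s)|\le\delta/2+|h'(\theta)|r+\tau r^2.
\]
We need $|h'(\theta)|\lesssim\sqrt{(\tau+\delta)(\Delta+\delta)}$. This follows from the bound $|h'(\theta)|^2\lesssim|h(\theta)|\,|h''|+(\text{minimum of }|h|+|h'|)\cdot\tau$. To verify it, minimise $|h|$ along the parabola: $h'(\theta)^2=4a(h(\theta)-c)$ in the normal form, and $|c|\lesssim\Delta+\delta$ when the vertex is near $I$; otherwise $|h'|$ is essentially constant and is $\lesssim\Delta+\tau\cdot$dist. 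Choosing $r=c_1\delta/\sqrt{(\tau+\delta)(\Delta+\delta)}$ keeps $|h|\le\delta$ on $(\theta-r,\theta+r)$, which gives both $|J|\ge2r$ and $\operatorname{dist}(\theta,\{a,b\})\ge r$.
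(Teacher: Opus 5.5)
The paper gives no argument of its own here: it records the lemma as a special case of Lemma 3.18 of \cite{PYZ}. You instead argue by hand from the normal form $h=a(s-s_0)^2+c$, splitting into $|a|\ge\epsilon\tau$ and $|a|<\epsilon\tau$. You fall back on the same citation only for the remaining bookkeeping. Your route is elementary and self-contained, and it shows exactly why the lemma is stated on $I/4$; the citation buys generality and avoids casework. The cited result cannot be a statement about arbitrary $C^2$ functions, for which the lemma is false (an oscillating $h$ has many components). Quadratics qualify because $h\mapsto(h(s),h'(s),h''(s))$ is an isomorphism with bounds uniform in $s\in I$. Your count of components is correct. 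So is your lower-bound argument: the identity $h'(\theta)^2=4a(h(\theta)-c)$ combines with $|c|\lesssim\Delta$ when $s_0$ lies within $O(1)$ of $I$, and with $|h'|\le 2\Delta$ on $I$ when $s_0$ is far from $I$. Together these give $|h'(\theta)|\lesssim\sqrt{(\tau+\delta)(\Delta+\delta)}$, and your choice of $r$ then works.

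The upper bound needs two repairs, after which \cite{PYZ} is not needed at all.

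First, for $|a|<\epsilon\tau$ your claim that $|h'|\gtrsim\tau$ on $I/4$ is false as stated (take $h\equiv\tau$). What holds is this: if $|h(s_1)|\le\delta$ for some $s_1\in I/4$ and $\tau\ge C\delta$, then Taylor expansion about $s_1$ gives $\tau\lesssim\delta+|h'(s_1)|+|a|$. Hence $|h'(s_1)|\gtrsim\tau$, and since $|h''|=2|a|<2\epsilon\tau$, also $|h'|\gtrsim\tau$ on $I/4$. When $\tau\lesssim\delta$ the target bound is $\gtrsim 1$ and there is nothing to prove.

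Second, the shrinking to $I/4$ plays no role in that case. It is needed precisely in the step you defer, namely $|a|\ge\epsilon\tau$ with $s_0\notin I$. On $I$ itself the lemma fails: take $h=(s-5-\eta)^2-\eta^2$ with $\delta^{1/2}\ll\eta\ll 1$. Then $\tau\sim 1$ and $\Delta=2\eta$, yet the component of $\{s\in I:|h(s)|\le\delta\}$ ending at $s=5$ has length $\sim\delta/\eta\gg\delta/\sqrt{\eta}$. On $I/4$ the check is short:
\begin{itemize}
\item If $s_0\in I$, then $\Delta\le|h(s_0)|=|c|$.
\item If $s_0\notin I$ and $s_1\in J\subseteq I/4$, then $|s_1-s_0|\ge 15/4$ and $|a|(s_1-s_0)^2\le\delta+|c|$ force $|a|\lesssim|c|+\delta$. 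Hence $\Delta\le|h(s_1)|+|h'(s_1)|\le\delta+2\sqrt{|a|(|c|+\delta)}\lesssim|c|+\delta$.
\end{itemize}
Combine this with your bound $\lesssim\delta/\sqrt{|a|(|c|+\delta)}$ on component lengths and with $|a|\ge\epsilon\tau$; this closes the upper bound, again for $\tau\gtrsim\delta$.

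Finally, your $r$ bounds the distance from $\theta$ to the endpoints of $J$ from below only if $(\theta-r,\theta+r)\subseteq I/4$. The statement itself fails when $\theta$ is an endpoint of $I/4$ (take $h\equiv\delta/4$). This is harmless in Lemma \ref{le:tangency}, where $\theta\in[0,1]$ and $r$ may be taken $\le 1/4$.
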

\begin{definition}[Tangency set]
Let $R$ be a curvilinear rectangle. We define the following sets of tangencies associated to $R$:
\begin{gather*}
    \mathcal{F}(R):=\{f\in\mathcal{F}:\ f\sim R\},\\
    \mathcal{G}(R):=\{g\in\mathcal{G}:\ g\sim R\},\\
    \mathbf{T}_{\mathcal{F},\mathcal{G}}(R):=\mathcal{F}(R)\times\mathcal{G}(R).
\end{gather*}
and write $(f,g)\sim R$ whenever $(f,g)\in\mathbf{T}_{\mathcal{F},\mathcal{G}}(R).$
\end{definition}
The following lemma is an a simple consequence of the definitions.
\begin{lemma}
    Let $\delta>0, t\in[\delta,1]$ and let $R=h^\delta(J)$ be a $(\delta,t)$-rectangle. Suppose $f$ and $g$ are tangent to $R$. Suppose there exists an $\theta\in I$ such that $|f(\theta)-g(\theta)|\leq\delta$. Then we have
    \begin{equation*}
    \tau(f,g)\lesssim t.
\end{equation*}
\end{lemma}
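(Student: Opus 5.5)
The plan is to reduce the statement to jet bounds at a single point and then use that $f-g$ is a quadratic. Let $J$ be the interval of length $\sqrt{\delta/t}$ centred at $\theta_0$ on which $R=h^\delta(J)$ is defined. Since $f\sim R$ and $g\sim R$, Lemma \ref{le:jet} applied to each pair $(f,h)$ and $(g,h)$ gives
\begin{gather*}
|f(\theta_0)-h(\theta_0)|,\ |g(\theta_0)-h(\theta_0)|\lesssim\delta,\quad |f'(\theta_0)-h'(\theta_0)|,\ |g'(\theta_0)-h'(\theta_0)|\lesssim\sqrt{\delta t},\\
|f''-h''|,\ |g''-h''|\lesssim t.
\end{gather*}
By the triangle inequality, the same bounds hold for $u:=f-g$ at $\theta_0$:
$$|u(\theta_0)|\lesssim\delta,\qquad |u'(\theta_0)|\lesssim\sqrt{\delta t},\qquad |u''|\lesssim t.$$
It is worth noting that the hypothesis $|f(\theta)-g(\theta)|\leq\delta$ for some $\theta\in I$ is not actually needed in this approach: tangency to $R$ already forces $f$ and $g$ to be $O(\delta)$-close on $J$.

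The second step is to propagate these bounds across $I$. Because $u$ is a quadratic, it has the exact Taylor expansion
$$u(s)=u(\theta_0)+u'(\theta_0)(s-\theta_0)+\tfrac12u''(s-\theta_0)^2,$$
and $u''$ is constant. Since $|s-\theta_0|\lesssim|I|=O(1)$ and $\delta\le t$, we have $\sqrt{\delta t}\le t$ and $\delta\le t$. Therefore
$$\sup_{s\in I}|u(s)|\lesssim\delta+\sqrt{\delta t}+t\lesssim t,\qquad \sup_{s\in I}|u'(s)|\le|u'(\theta_0)|+|u''|\,|I|\lesssim t,\qquad |u''|\lesssim t.$$
Summing these gives $\tau(f,g)\lesssim t$.

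The only step that needs care is the application of Lemma \ref{le:jet}. Its statement assumes $\theta\in[0,1]$, whereas here $J$ is centred at a general point of $I$, and we need the forward implication, from tangency to the jet bounds. That implication should go through for any centre. Tangency gives $|f-h|\lesssim\delta$ on all of $J$. Evaluating a quadratic at three equally spaced points of $J$ (a Lagrange interpolation, or finite differences with step $\sim\sqrt{\delta/t}$) then recovers $|(f-h)'(\theta_0)|\lesssim\delta/\sqrt{\delta/t}=\sqrt{\delta t}$ and $|(f-h)''|\lesssim\delta/(\delta/t)=t$, with constants independent of where $J$ sits. If the paper's version of Lemma \ref{le:jet} is only meant for $\theta\in[0,1]$, I will include this short interpolation argument inline rather than invoke the lemma.
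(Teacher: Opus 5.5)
Your proof is correct and follows the same skeleton as the paper's. In both, one first bounds the jet of $f-g$ at the centre of $J$ by $\delta$, $\sqrt{\delta t}$, $t$ via Lemma \ref{le:jet}, and then propagates these bounds over $I$ using that $(f-g)''$ is constant and $|I|=O(1)$. The only difference is the first step: you get the jet bounds for $f-g$ directly from the two tangencies to the given $R$ by the triangle inequality through $h$, and you rightly observe that the hypothesis $|f(\theta)-g(\theta)|\leq\delta$ is then superfluous. The paper instead detours through Lemma \ref{le: rect}, redefining $t$ as $(\tau(f,g)+\delta)(\Delta(f,g)+\delta)/\delta$ so that $f^\delta\cap g^\delta$ is itself a $(\delta,t)$-rectangle. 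Your version is the more direct one and keeps $t$ tied to the rectangle in the statement.
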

\begin{proof}
Let $h:=f-g$ and $t:=\frac{(\tau(f,g)+\delta)(\Delta(f,g)+\delta)}{\delta}$. By Lemma \ref{le: rect}, the set $f^\delta\cap g^\delta$ is a $(\delta,t)$-rectangle $R=f^\delta(J)$ ($J $ is the interval of length $\sqrt{\delta/t}$ centred at $\theta$). By Lemma \ref{le:jet}, $$|h''|=|f''-g''|\lesssim t,\ |h'(s)|\lesssim\sqrt{\delta t},\ |h(s)|\lesssim \delta\ \mbox{ for all }\ s\in J,$$ and thus,
$$|h'(s)|=\Big|\int_{\theta}^sh''(u)du+h'(\theta)\Big|\leq|h''||s-\theta|+|h'(\theta)|\lesssim t|J|+\delta\lesssim t\sqrt{\frac{\delta}{t}}+\delta\lesssim t,\ \mbox{ for all }\ s\in I.$$
Similarly, 
$$|h(s)|=\Big|\int_{\theta}^sh'(u)du+h(\theta)\Big|\lesssim \|h'||_\infty|I|+|h(\theta)|\lesssim t+\delta\lesssim t,\ \mbox{ for all}\ s\in I.$$
Therefore, $\tau(f,g)\lesssim t.$
\end{proof}

    For $p\in [0,1]^2$, let
    $$\mathcal{F}(p):=\{f\in\mathcal{F}:\ p\in f^\delta\},\ \mathcal{G}(p):=\{g\in\mathcal{G}:\ p\in g^\delta\}.  $$
    and define the multiplicity functions
    $$m_\mathcal{F}(p):=\sum_{f\in\mathcal{F}}\chi_{f^\delta}(p),\ m_\mathcal{G}(p):=\sum_{g\in\mathcal{G}}\chi_{g^\delta}(p)$$
    We first show that $m_\mathcal{F}(p),m_\mathcal{G}(p)\lesssim\delta^{-2}$ for all $p\in [0,1]^2.$
    Let $p=(s_0,y_0)\in [0,1]^2$ and let $(a,b,c)$ correspond to some $f\in\mathcal{F}(p)$. Since the graph of $f$ intersects $B(p,\delta)$, we have $|f(s_0)-y_0|\leq\delta$. By replacing $s_0$ with $s_0+\delta/2$ if necessary, we may assume that $|s_0|\geq\delta/2$. Thus, $({a},{b},{c})$, lies in the $C\delta$-neighbourhood of the hyperplane $xs_0^2+ys_0+z=y_0$ intersected with the unit ball in $\R^3$. The same holds for every other triple $(a',b',c')$ that corresponds to some curve in $\cF(p)$. Since such triples are also $\delta$-separated, there can be at most $\sim\delta^{-2}$ quadratics in $\mathcal{F}(p)$. Similarly, $m_\mathcal{G}(p)\lesssim\delta^{-2}$.\\\\
      By dyadic pigeonholing, there exist $\lambda_1,\lambda_2\leq\delta^{-2}$ and a compact $E_1\subseteq[0,1]^2$ such that $m_\cF(p)\sim\lambda_1,\ m_\cG(p)\sim\lambda_2$ for all $p\in E_1$ and
      $$\int_{[0,1]^2} \Big( \sum_{f \in \mathcal{F}} \chi_{f^\delta} \Big)^{3/4} \Big( \sum_{g \in \mathcal{G}} \chi_{g^\delta} \Big)^{3/4}\lessapprox_\delta\int_{E_1} \Big( \sum_{f \in \mathcal{F}} \chi_{f^\delta} \Big)^{3/4} \Big( \sum_{g \in \mathcal{G}} \chi_{g^\delta} \Big)^{3/4}\sim\ \lambda_1^{3/4}\lambda_2^{3/4}|E_1|. $$
       Hence, the proof of (\ref{eq:parabola}) is reduced to
    \begin{equation}\label{eq:incidence}
        \lambda_1^{3/4}\lambda_2^{3/4}|E_1|\lesssim_\varepsilon\rho^{-1/2} \delta^{3/2 - \varepsilon} \left( \#\mathcal{F}^{3/4} \#\mathcal{G}^{3/4} + \#\mathcal{F} + \#\mathcal{G} \right).
    \end{equation}
      We now proceed to set up the proof of (\ref{eq:incidence}). The following lemma allows us to find a common tangency scale $\Delta $ for a large subset of pairs $(f,g)\in\cF\times\cG$. It follows a two-ends reduction scheme similar to Section 5.1.2 in \cite{PYZ}.
      \begin{lemma}\label{le:common}
          There exists a subset $E_3\subseteq E_1$ such that $|E_3|\approx|E_1|$ and $\Delta\in[\delta,1]$ with the following property: for every $p\in E_3$ there exists a pair $(f_p,g_p)\in\cF(p)\times\cG(p)$ such that $\Delta(f_p,g_p)\leq\Delta$ and
          \begin{gather*}
              \#\{f\in \cF(p):\ \Delta(f,g_p)\leq\Delta\}\gtrsim\delta^\varepsilon\lambda_1,\\
               \#\{g\in \cG(p):\ \Delta(f_p,g)\leq\Delta\}\gtrsim\delta^\varepsilon\lambda_2.
          \end{gather*}
      \end{lemma}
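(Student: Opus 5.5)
We use a two-step dyadic pigeonholing in the spirit of the two-ends reduction of Section 5.1.2 in \cite{PYZ}. The idea is to fix, at each point, a typical tangency scale for pairs meeting there, and then choose representative curves $f_p,g_p$ that see a large share of the opposite family at that scale.

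\emph{Step 1 (a common scale).} For each $p\in E_1$ we have $m_\cF(p)\sim\lambda_1$ and $m_\cG(p)\sim\lambda_2$, so there are $\sim\lambda_1\lambda_2$ pairs $(f,g)\in\cF(p)\times\cG(p)$. Since $\delta\le\Delta(f,g)+\delta\lesssim 1$, we split these pairs according to the dyadic value $\Delta\in[\delta,1]$ with $\Delta(f,g)+\delta\sim\Delta$; there are $O(|\log\delta|)$ such values. For each $p$ some dyadic $\Delta_p$ carries $\gtrsim|\log\delta|^{-1}\lambda_1\lambda_2$ of the pairs. Pigeonholing $\Delta_p$ over $p\in E_1$ then gives a set $E_2\subseteq E_1$ with $|E_2|\gtrsim|\log\delta|^{-1}|E_1|$ and a single $\Delta$ such that, for every $p\in E_2$,
\[
\#\{(f,g)\in\cF(p)\times\cG(p):\ \Delta(f,g)\le\Delta\}\gtrsim|\log\delta|^{-1}\lambda_1\lambda_2 .
\]

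\emph{Step 2 (popular representatives).} Fix $p\in E_2$ and consider the bipartite graph on $\cF(p)\sqcup\cG(p)$ whose edges are the pairs with $\Delta(f,g)\le\Delta$. By Step 1 its edge density is $\eta:=|\log\delta|^{-1}$ relative to the complete bipartite graph. A standard averaging argument gives
\[
\#\{g\in\cG(p):\ \deg(g)\ge\tfrac{\eta}{2}\lambda_1\}\gtrsim\eta\lambda_2,
\]
because vertices of $\cG(p)$ with smaller degree carry at most half the edges. Likewise,
\[
\#\{f\in\cF(p):\ \deg(f)\ge\tfrac{\eta}{2}\lambda_2\}\gtrsim\eta\lambda_1 .
\]
We need a single edge $(f_p,g_p)$ with both endpoints popular. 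To obtain this, we delete all low-degree vertices on both sides. This removes at most $\tfrac{\eta}{2}\lambda_1\cdot\#\cG(p)+\tfrac{\eta}{2}\lambda_2\cdot\#\cF(p)\lesssim\tfrac{\eta}{2}\lambda_1\lambda_2\cdot O(1)$ edges. With the implicit constants adjusted (use $\eta/10$ thresholds), at least half of the edges survive. Any surviving edge $(f_p,g_p)$ then satisfies $\Delta(f_p,g_p)\le\Delta$ and has
\[
\#\{f\in\cF(p):\ \Delta(f,g_p)\le\Delta\}\gtrsim\eta\lambda_1,\qquad \#\{g\in\cG(p):\ \Delta(f_p,g)\le\Delta\}\gtrsim\eta\lambda_2 .
\]
Since $\eta=|\log\delta|^{-1}\gtrsim_\varepsilon\delta^{\varepsilon}$, we may take $E_3:=E_2$, which satisfies $|E_3|\approx|E_1|$.

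\emph{Expected obstacle.} The only delicate point is Step 2. We need the choice of the pair to be measurable, or at least to not affect the measure of $E_3$. This is handled by noting that the family data $\cF(p),\cG(p)$ are piecewise constant in $p$: they are constant on the cells of the arrangement of the finitely many sets $f^\delta,g^\delta$. So we choose $(f_p,g_p)$ cell by cell, which settles measurability.

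If the paper intends the two-ends strengthening of \cite{PYZ}, where the curves counted at scale $\Delta$ should also not concentrate at a smaller scale, we would add one further pigeonhole. That step would take the minimal dyadic $\Delta$ at which the counts are within $\delta^{\varepsilon}$ of maximal, which costs only the stated $\delta^{\varepsilon}$ loss since there are $O(\varepsilon^{-1})$ relevant scales after grouping dyadic scales into $\delta^{\varepsilon}$-blocks. For the statement as written, however, Steps 1 and 2 suffice.
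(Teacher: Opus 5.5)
Your proof is correct, but it takes a different route from the paper's. The paper does not pigeonhole popular scales. It makes an \emph{extremal} choice of scale instead. With $n_1(\Delta',p)=\max_{f\in\cF(p)}\#\{g\in\cG(p):\Delta(f,g)\le\Delta'\}$, it takes $\Delta_{\cG,p}$ nearly maximising $n_1(\Delta',p)(\Delta')^{-\varepsilon}$ over $\Delta'\in[\delta,100\rho]$, and $f_p$ realising $n_1(\Delta_{\cG,p},p)$. Comparing with $\Delta'=100\rho$, where all of $\cG(p)$ counts, gives $\gtrsim\Delta_{\cG,p}^{\varepsilon}\lambda_2\ge\delta^{\varepsilon}\lambda_2$ partners of $f_p$. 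It then pigeonholes $\Delta_{\cG,p}$ and repeats with the roles reversed, choosing $g_p$ among those partners. A second pigeonhole follows, and it sets $\Delta=\max\{\Delta_\cF,\Delta_\cG\}$.

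Your pair pigeonhole followed by symmetric low-degree pruning of the bipartite graph treats $\cF$ and $\cG$ symmetrically. It pigeonholes in $p$ once instead of twice, and loses only $|\log\delta|$ in both $|E_3|$ and the counts. The weighted supremum buys the paper a two-ends property that the statement does not record: $n_1(\Delta',p)\le 2(\Delta'/\Delta_{\cG,p})^{\varepsilon}n_1(\Delta_{\cG,p},p)$ at every finer scale $\Delta'$, and likewise on the $\cF$ side. This is the strengthening you sketch at the end, but in quantitative form and without an extra pigeonhole.

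Finally, the statement as written needs even less than your Step 1. Since $\Delta(f,g)\le\tau(f,g)\le 100\rho$, you can take $\Delta=100\rho$ (the paper itself works with $\Delta\in[\delta,100\rho]$), $E_3=E_1$, and any pair $(f_p,g_p)$; this already gives counts $\sim\lambda_1$ and $\sim\lambda_2$. Lemmas \ref{le:tangency} and \ref{le:aux}, as written, use only the properties listed in the statement.
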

      As a consequence, for every $p\in E_3$ we have $\Delta(f,g)\lesssim\Delta$ for a $\delta^{2\varepsilon}$-fraction of $\cF(p)\times\cG(p)$.
      \begin{proof}
          For $p\in E_1$ and $\Delta>0$ define $$n_1(\Delta,p):=\max_{f\in \cF(p)}\#\{g\in\cG(p):\ \Delta(f,g)\leq\Delta\},$$
          and $$n_2(p)=\sup_{\Delta\in[\delta,100\rho]} n_1(\Delta,p)\Delta^{-\varepsilon}.$$
          For each $p\in E_1$ there exists $\Delta_{\cG,p}$ such that 
          \begin{gather*}
              n_2(p)\leq2 n_1(\Delta_{\cG,p},p)\Delta_{\cG,p}^{-\varepsilon}
          \end{gather*}
          and an $f_p\in\cF(p)$ such that $$n_1(\Delta_{\cG,p},p)=\#\{g\in \cG(p):\ \Delta(f_p,g)\leq \Delta_{\cG,p}\}.$$
          By dyadic pigeonholing on $\Delta_{\cG,p}$ we may find a $E_2\subseteq E_1$ and  $\Delta_{\cG}$ such that $|E_2|\approx_\delta|E_1|$ and $\Delta_{\cG,p}\sim\Delta_\cG$ for all $p\in E_2$. Therefore, we have 
          \begin{gather*}
              \#\{g\in \cG(p):\ \Delta(f_p,g)\leq\Delta_\cG\}\gtrsim\Delta_\cG^\varepsilon n_1(100,p)\gtrsim\Delta_\cG^{\varepsilon}\#\{g\in\cG(p):\ \Delta(f_p,g)\leq100\}\sim\Delta_\cG^{\varepsilon}\lambda_2,
          \end{gather*}
          where the last approximate equality holds because for every $f\in\cF,g\in\cG$ we have $\Delta(f,g)\leq\tau(f,g)\leq 100\rho$. For each $p\in E_2$, let $\tilde{\cG}(p):=\{g\in\cG(p):\ \Delta(f_p,g)\leq\Delta_\cG\}.$ For $p\in E_2$ and $\Delta>0$ we define
          $$m_1(\Delta,p):=\max_{g\in\tilde{\cG}(p)}\#\{f\in\cF(p):\ \Delta(f,g)\leq\Delta\},$$
          and 
          $$m_2(p):=\sup_{\Delta\in[\delta,100]}m_1(\Delta,p)\Delta^{-\varepsilon}. $$
          As before, for each $p\in E_2$ we may find $\Delta_{\cF,p}$ and $g_p\in \tilde{\cG}(p)$ such that 
          $$m_2(p)\leq2m_1(\Delta_{\cF,p},p)\Delta_{\cF,p}^{-\varepsilon}$$ and then perform another dyadic pigeonholing to find $E_3\subseteq E_2$ and $\Delta_\cF$ such that $|E_3|\approx_\delta|E_2|$ and $\Delta_{\cF,p}\sim \Delta_\cF$ for all $p\in E_3$. Consequently,
          \begin{gather*}
              \#\{f\in\cF(p):\ \Delta(f,g_p)\leq\Delta_\cF\}\gtrsim\Delta_\cF^\varepsilon\lambda_1.
          \end{gather*}
          Moreover, since $g_p\in\tilde{\cG}(p),$ we have $\Delta(f_p,g_p)\leq\Delta_\cG$. To conclude the proof, we set $\Delta=\max\{\Delta_\cF,\Delta_\cG\}$ and observe that 
          \begin{gather*}
              \#\{f\in\cF(p):\ \Delta(f,g_p)\leq\Delta_\cF\}\leq\#\{f\in\cF(p):\ \Delta(f,g_p)\leq\Delta\},\\
              \#\{g\in\cG(p):\ \Delta(f_p,g)\leq\Delta_\cG\}\leq\#\{g\in\cG(p):\ \Delta(f_p,g)\leq\Delta\},
          \end{gather*}
          for all $p\in E_3$. The consequence in the statement of the lemma follows from the triangle inequality for $\Delta(f,g)$: For every $(f,g)\in\cF(p)\times\cG(p)$ such that $\Delta(f,g_p)\leq\Delta$ and $\Delta(f_p,g)\leq\Delta$ we have
          \begin{gather*}
              \Delta(f,g)\leq\Delta(f,g_p)+\Delta(f_p,g_p)+\Delta(f_p,g)\leq 3\Delta.
          \end{gather*}
      \end{proof}
We will need the following definitions:
\begin{definition}[Comparable/incomparable rectangles]
Two $(\delta,t)$-rectangles $R,R'$ are said to be \emph{comparable} if there is another $(C\delta,t)$-rectangle $R''$ that contains both. We say that $R,R'$ are \emph{incomparable} if they are not comparable.
\end{definition}
\begin{definition}[Richness]
A curvilinear rectangle ${R}$ is called $(\mu, \nu)$-rich if $\mu\leq\#\cF(R)\leq 2\mu$ and $\nu\leq\#\cG(R)\leq 2\nu$.
\end{definition}
\begin{lemma}\label{le:tangency}
    Let $p=(s,y)\in E_3$ and let \begin{gather*}
    \cF'(p):=\{f\in\cF(p):\ \Delta(f,g_p)\leq\Delta\},\\
    \cG'(p):=\{g\in\cG(p):\ \Delta(f_p,g)\leq\Delta\}. 
\end{gather*} be the sets obtained in Lemma \ref{le:common}. Let $C>0$ be an absolute constant and $J$ be the interval centred at $s$ with length $C\delta/\sqrt{\Delta\rho}$ and define $R_p:=f_p^\delta(J)$. Then every pair $(f,g)\in\cF'(p)\times\cG'(p)$ is tangent to $R_p$.
\end{lemma}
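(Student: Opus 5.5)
The plan is to verify the definition of tangency directly. By Definition \ref{def:tangency}, $f\sim R_p$ means $f_p^\delta(J)\subseteq f^{C'\delta}(J)$ for a suitable absolute constant $C'$. It therefore suffices to show that for every $f\in\cF'(p)$ and every $g\in\cG'(p)$ we have
$$|f(\theta)-f_p(\theta)|\lesssim\delta \quad\text{and}\quad |g(\theta)-f_p(\theta)|\lesssim\delta \quad\text{for all } \theta\in J.$$
I will get this from Lemma \ref{le: rect}, which gives a lower bound on the length of the interval on which two quadratics stay $\delta$-close, in terms of $\tau$ and $\Delta$. In the statement of the present lemma, $C$ should be read as a sufficiently small absolute constant.

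First I collect the parameters for each pair. Since $p\in f^\delta\cap f_p^\delta\cap g^\delta$ and $p=(s,y)$ with $s\in[0,1]\subseteq I/4$, we get $|f(s)-f_p(s)|\lesssim\delta$ and $|g(s)-f_p(s)|\lesssim\delta$.

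For the pair $(f,f_p)$: both lie in $\cF$, so $\tau(f,f_p)\le\rho$ by the bipartite hypothesis. By the triangle inequality for the pseudo-metric $\Delta$,
$$\Delta(f,f_p)\le\Delta(f,g_p)+\Delta(g_p,f_p)\le 2\Delta,$$
using $\Delta(f,g_p)\le\Delta$ (as $f\in\cF'(p)$) and $\Delta(f_p,g_p)\le\Delta$ from Lemma \ref{le:common}.

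For the pair $(g,f_p)$: $\tau(g,f_p)\le100\rho$ by the bipartite hypothesis, and $\Delta(f_p,g)\le\Delta$ because $g\in\cG'(p)$. Since $\Delta\ge\delta$ and $\rho\ge\delta$, in both cases
$$(\tau+\delta)(\Delta+\delta)\lesssim\rho\Delta.$$

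Next I apply Lemma \ref{le: rect} at a slightly enlarged scale $\delta_1=A\delta$. Here $A$ is an absolute constant chosen so that $|f(s)-f_p(s)|\le\delta_1/2$, and likewise for $g$. The lemma then says the component of $\{|f-f_p|\le\delta_1\}$ containing $s$ is an interval whose endpoints are at distance
$$\gtrsim \frac{\delta_1}{\sqrt{(\tau+\delta_1)(\Delta+\delta_1)}}\gtrsim\frac{\delta}{\sqrt{\rho\Delta}}$$
from $s$, and the same holds for $g$. Choosing $C$ small enough, the interval $J$ centred at $s$ with $|J|=C\delta/\sqrt{\Delta\rho}$ lies inside both components. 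Hence $|f-f_p|\le A\delta$ and $|g-f_p|\le A\delta$ on $J$. Consequently $f_p^\delta(J)\subseteq f^{(A+1)\delta}(J)$ and $f_p^\delta(J)\subseteq g^{(A+1)\delta}(J)$, i.e.\ $(f,g)\sim R_p$. As a cross-check, one can note that $R_p$ is a $(\delta,t)$-rectangle with $t\sim\rho\Delta/\delta$. The conclusion then matches Lemma \ref{le:jet}: $\sqrt{\delta t}\sim\sqrt{\rho\Delta}$ is the natural derivative scale at $s$.

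The main obstacle is bookkeeping of constants. Lemma \ref{le: rect} requires closeness $\le\delta/2$ at $s$, whereas we only know $\lesssim\delta$; this is handled by working at scale $A\delta$, which only changes absolute constants. There are two further points to check. One must confirm that $s\in I/4$, which holds since $p\in[0,1]^2$ and $I=[-5,5]$. One must also use $\Delta,\rho\ge\delta$ so that the $+\delta$ terms in Lemma \ref{le: rect} are harmless. If $\delta/\sqrt{\rho\Delta}$ exceeds the length of $I$, the statement degenerates but remains true, with $J$ intersected with $I/4$. Finally, the tangency constant in Definition \ref{def:tangency} must be at least $A+1$, which is consistent with fixing that constant large.
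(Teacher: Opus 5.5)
Your proposal is correct and is essentially the paper's proof. You bound $\Delta(f,f_p)\le 2\Delta$ by the triangle inequality, use the bipartite bounds $\tau\lesssim\rho$, and note that $f$, $g$, $f_p$ are all $O(\delta)$-close at $s$. You then apply Lemma \ref{le: rect} at an enlarged scale (the paper uses $4\delta$, you use $A\delta$) to get a component of $\{|h-f_p|\lesssim\delta\}$ extending $\gtrsim\delta/\sqrt{\rho\Delta}$ on both sides of $s$, which contains $J$ once $C$ is chosen small; your extra checks that $s\in I/4$ and that $\rho,\Delta\ge\delta$ absorb the $+\delta$ terms are details the paper leaves implicit.
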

\begin{proof}
    Let $f\in\cF'(p),\ g\in\cG'(p)$. By definition, $\Delta(f_p,g)\leq\Delta$. Moreover, by the triangle inequality we get 
    $$\Delta(f_p,f)\leq\Delta(f_p,g_p)+\Delta(g_p,f)\leq2\Delta.$$ Let $h\in\{f,g\}$. We have $$|h(s)-f_p(s)|\leq|h(s)-y|+|f_p(s)-y|\leq2\delta.$$
    Lemma \ref{le: rect} at scale $4\delta$ implies that there exists an interval $\tilde{J}$ that contains $s$, is contained in the set 
    $$A:=\{t\in I/4: |f_p(t)-h(t)|\leq4\delta\},$$ has length $$\sim \delta/\sqrt{(\tau(f_p,h)+\delta)(\Delta(f_p,h)+\delta)},$$
    and if $\{a,b\}$ are the endpoints of $\tilde{J}$, then $\rm{dist}(s,\{a,b\})\gtrsim\delta/\sqrt{\rho\Delta}$. Since $\Delta(f_p,h)\leq2\Delta$ and $\tau(f_p,h)\leq100\rho$, the length of $\tilde{J}$ is $\gtrsim\delta/\sqrt{\rho\Delta}$. Thus, by choosing $C$ appropriately we ensure that $J\subseteq \tilde{J}$. Consequently, we have $R_p\subseteq h^{C\delta}(J)$, i.e. $h\sim R_p$.\\\\
\end{proof}
    According to Lemma \ref{le:tangency}, for every $p\in E_3$, $R_p$ is tangent to at least $\delta^{\varepsilon}\lambda_1$ curves $f\in\cF(p)$ and $\delta^\varepsilon\lambda_2$ curves $g\in \cG(p)$. In other words, each $R_p$ is $(\delta^\varepsilon\lambda_1,\delta^\varepsilon\lambda_2)$-rich. Since $E_3$ is compact, there exists a finite cover $\mathcal{R}_0$ of $E_3$ consisting of these rectangles $R_p$. By a standard covering lemma, there exists a cover $\mathcal{R}$ of $E_3$ consisting of pairwise incomparable rectangles such that each $R\in\mathcal{R}$ is the $10$-fold dilation of some $R_0\in\mathcal{R}_0$ by a (common) fixed constant. Thus, we have $$|E_3|\lesssim\#\mathcal{R}\cdot\delta^2\Delta^{-1/2}\rho^{-1/2},$$ so that (\ref{eq:incidence}) can be reduced to the following lemma:
\begin{lemma}\label{le:aux}
        Let $0<\delta\leq\rho<1$, $\Delta\in[\delta,100\rho]$ and $(\mathcal{F},\mathcal{G})$ be a $\rho$-bipartite pair as defined above. Then, for any family $\mathcal{R}$ of pairwise incomparable $(\delta^\varepsilon\lambda_1,\delta^\varepsilon\lambda_2)$-rich curvilinear rectangles of dimensions $\delta\times\frac{\delta}{\sqrt{\rho\Delta}}$ we have
            \begin{equation}\label{eq:inc2}
                \lambda_1^{3/4}\lambda_2^{3/4}\#\mathcal{R}\lesssim_{\varepsilon}\delta^{-1/2-\varepsilon}\Delta^{1/2}\left( \#\mathcal{F}^{3/4} \#\mathcal{G}^{3/4} + \#\mathcal{F} + \#\mathcal{G} \right).
            \end{equation}
        \end{lemma}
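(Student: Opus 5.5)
We will prove \eqref{eq:inc2} by counting tangencies. The main input is Wolff's bilinear rectangle-counting lemma in the form given in \cite{Zahl}. For a $\rho$-bipartite pair $(\cF,\cG)$ of curves with $\delta$-separated members and a family $\mathcal{R}$ of pairwise incomparable $(\mu,\nu)$-rich $(\delta,t)$-rectangles, it gives
\begin{equation*}
\#\mathcal{R}\lesssim_\varepsilon \delta^{-\varepsilon}\Big(\big(\tfrac{\#\cF\#\cG}{\mu\nu}\big)^{3/4}+\tfrac{\#\cF}{\mu}+\tfrac{\#\cG}{\nu}\Big).
\end{equation*}
Our rectangles have dimensions $\delta\times\delta/\sqrt{\rho\Delta}$, so they are $(\delta,t)$-rectangles with $t=\rho\Delta/\delta$. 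We use $\mu=\delta^\varepsilon\lambda_1$ and $\nu=\delta^\varepsilon\lambda_2$.

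\textbf{Step 1: rescaling.} The estimate above is natural at tangency scale $t\sim\rho$, whereas our rectangles are longer. Following \cite{PYZ,Wolff}, we first cover the $(\delta,t)$-rectangles by $(\delta',1)$-type objects at a coarser scale, or equivalently rescale. We partition $\cF$ and $\cG$ into $\tau$-balls of radius comparable to $\Delta$. Inside such a ball we apply the affine change of variables $(s,y)\mapsto(s,y/\Delta)$ after subtracting a fixed quadratic. This maps the curves to a bipartite pair at scale $\rho/\Delta$ with thickness $\delta/\Delta$. Pairs with $\Delta(f,g)\lesssim\Delta$ become pairs with tangency gauge $\lesssim 1$. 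The rich rectangles become rectangles of the type counted by the lemma, and incomparability is preserved up to constants.

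\textbf{Step 2: applying the counting lemma and summing.} In each rescaled piece $(\cF_i,\cG_j)$ we apply the counting lemma and then sum over pieces. The cross term is summed with H\"older exactly as in the localisation argument of Section~\ref{sec:2}, using the $\ell^1\hookrightarrow\ell^{3/2}$ embedding. Finally we multiply by $\lambda_1^{3/4}\lambda_2^{3/4}$. The cross term becomes $\delta^{-O(\varepsilon)}(\#\cF\#\cG)^{3/4}$ times the scale factor. Tracking the rescaling, this factor should be $\delta^{-1/2}\Delta^{1/2}$, which is the factor appearing in \eqref{eq:inc2}.

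\textbf{Step 3: the linear terms.} These are the main obstacle. The counting lemma contributes $\lambda_1^{3/4}\lambda_2^{3/4}\#\cF/\mu\approx\lambda_1^{-1/4}\lambda_2^{3/4}\#\cF$, which is bounded by $\#\cF$ only if $\lambda_2\lesssim\lambda_1^{1/3}$ up to the required powers of $\delta$ and $\Delta$. In the remaining regime, where $\lambda_2$ is large compared with $\lambda_1$, we will use broadness, Definition~\ref{def:broad}. Each $R\in\mathcal{R}$ is a $(\delta,t)$-rectangle tangent to at least $\delta^\varepsilon\lambda_2$ curves of $\cG$, so broadness gives $\delta^\varepsilon\lambda_2\lesssim 1+t^\alpha\#\cG$, and likewise $\delta^\varepsilon\lambda_1\lesssim 1+t^\alpha\#\cF$.

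With $\alpha\ge1/2$ and $t=\rho\Delta/\delta$, we would try first to use these bounds to trade the excess power $\lambda_2^{3/4}\lambda_1^{-1/4}$ against $\#\cG^{3/4}\#\cF^{-1/4}$. The aim is to absorb the linear term into the bilinear term $(\#\cF\#\cG)^{3/4}$, or into $\#\cG$ when $\#\cF$ is small. The degenerate case $\lambda_j\lesssim\delta^{-\varepsilon}$ is handled separately by trivial counting: we bound $\#\mathcal{R}$ by the number of tangent pairs, using that each curve meets $O(\Delta^{-1/2}\delta^{-1/2}\cdots)$ incomparable rectangles.

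I expect the bookkeeping of the powers of $\Delta$ and $\rho$ against the broadness exponent $\alpha$ to be the delicate part. In particular, it is not yet clear that $\alpha=1/2$ is exactly enough to close the estimate in every case.
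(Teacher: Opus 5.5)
Your proposal has a genuine gap at each of its two substantive steps.

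\textbf{Step 1 (rescaling).} This step does not produce rectangles to which Lemma \ref{le:zahl} applies. That lemma counts $(\eta,t)$-rectangles for a $t$-bipartite pair, so the rectangle scale must match the bipartite scale. Your $R$ are $(\delta,\rho\Delta/\delta)$-rectangles for a $\rho$-bipartite pair; they are shorter, not longer, than $(\delta,\rho)$-rectangles.

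The map $(s,y)\mapsto(s,y/\Delta)$ divides the thickness, $\tau$ and the tangency gauge by the same factor. Hence the intersection length $\delta/\sqrt{\tau(f,g)\Delta(f,g)}$ is unchanged. After rescaling you have $(\delta/\Delta,\rho/\delta)$-rectangles for a $(\rho/\Delta)$-bipartite pair. The mismatch factor $\Delta/\delta$ persists, and $\rho/\Delta$ is typically larger than $1$. Partitioning into $\tau$-balls of radius $\Delta$ also does not preserve richness: by Lemma \ref{le:jet}, the curves tangent to a given $R$ can spread over a jet box of size $\delta\times\sqrt{\rho\Delta}\times\rho$. So the factor $\delta^{-1/2}\Delta^{1/2}$ is only asserted in your Step 2.

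In the paper it comes from a different mechanism. Each $R$ is dilated by $\sigma/\delta$, with $\sigma=\Delta$, into a $(\sigma,\rho)$-rectangle $\tilde R$. Tangency to $R$ implies tangency to $\tilde R$, and each pair $(f,g)$ is tangent to $O(1)$ of the $R$'s. So the number $M$ of fine rectangles per coarse rectangle satisfies both $M\lesssim(\sigma/\delta)^2$ and $M\lambda_1\lambda_2\lessapprox\mu\nu$, where $(\mu,\nu)$ is the richness of $\tilde R$. Interpolating with weights $\tfrac14,\tfrac34$ gives $M\lessapprox\sigma^{1/2}\delta^{-1/2}(\mu\nu)^{3/4}(\lambda_1\lambda_2)^{-3/4}$. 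Lemma \ref{le:incid} is then applied to the coarse rectangles with $\eta=\sigma$ and $t=\rho$. The resulting linear term is therefore $\mu^{-1/4}\nu^{3/4}\#\mathcal{F}$, with the coarse richness, not $\lambda_1^{-1/4}\lambda_2^{3/4}\#\mathcal{F}$.

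\textbf{Step 3 (linear terms).} This is not yet an argument, and broadness used as you propose cannot close it. Broadness only gives upper bounds such as $\nu\lesssim 1+t^{\alpha}\#\mathcal{G}$. Inserting $\nu\lesssim t^{1/2}\#\mathcal{G}$ into the target $\mu^{-1/4}\nu^{3/4}\#\mathcal{F}\lesssim(\#\mathcal{F}\#\mathcal{G})^{3/4}$ would still require $\#\mathcal{F}\lesssim\mu t^{-3/2}$. That is a lower bound on richness which no hypothesis provides. Moreover, for your fine rectangles $t=\rho\Delta/\delta$ can be $\geq1$, where broadness is vacuous.

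What is missing is an upper bound on the number of coarse rectangles in terms of $\mathcal{G}$ alone. The paper obtains it by a two-ends reduction (Lemma \ref{le:robust}):
\begin{itemize}
\item For each $\tilde R$, choose the minimal dyadic $t$ for which some $(\sigma,t)$-rectangle $S\supseteq\tilde R$ is tangent to $\gtrsim t^{\varepsilon}\nu$ curves of $\mathcal{G}$.
\item Minimality forces at least half of the pairs $(g_1,g_2)\sim S$ to have $\Delta(g_1,g_2)\sim\sigma t$, up to a factor $K=2^{\lceil 1/\varepsilon\rceil}$.
\item Each such pair is tangent to $O(\sqrt{K/\rho})$ incomparable $S$. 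This gives $\#\mathcal{S}\lesssim t^{-2\varepsilon}\sqrt{K/\rho}\,(\#\mathcal{G}/\nu)^2$, and hence $\#\tilde{\mathcal{R}}\lesssim K^{1/2}t^{-1/2-2\varepsilon}(\#\mathcal{G}/\nu)^2$.
\end{itemize}
Only then is $(\delta,\tfrac12)$-broadness applied, to $S$ at this scale, giving $t^{-1/2}\lesssim t^{-\varepsilon}\#\mathcal{G}/\nu$. Combined with $\#\mathcal{F}/\mu\lesssim\#\tilde{\mathcal{R}}$ (after reducing to $\#\mathcal{G}/\nu\le\#\mathcal{F}/\mu$), this yields $\#\mathcal{F}/\mu\lesssim\delta^{-O(\varepsilon)}(\#\mathcal{G}/\nu)^3$. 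That is exactly $\mu^{-1/4}\nu^{3/4}\#\mathcal{F}\lesssim\delta^{-O(\varepsilon)}(\#\mathcal{F}\#\mathcal{G})^{3/4}$.

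This also answers your doubt about $\alpha$: the exponent $\tfrac12$ is precisely what converts the factor $t^{-1/2}$ into $\#\mathcal{G}/\nu$.
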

        For the proof of Lemma \ref{le:aux}, we rely on Lemma $5.18$ from \cite{Zahl}, whose original formulation (for circles) and proof can be found as Lemma $1.4$ in \cite{Wolff}:
    \begin{lemma}[Incidence bound for curvilinear rectangles]\label{le:zahl}
    Let $\varepsilon>0,\ 1>t\geq\eta>0$, $\mu,\nu\in\Z_+$, $(\mathcal{C}, \mathcal{D})$ be a $t$-bipartite pair such that $\mathcal{C},\mathcal{D}$ are $\eta$-separated (with respect to $d$) and let $\tilde{\mathcal{R}}$ be a collection of pairwise incomparable $(\mu,\nu)$-rich (with respect to $(\mathcal{C},\mathcal{D})$\ ) $(\eta,t)$-rectangles. Then
    \[
    \#\tilde{\mathcal{R}} \lesssim_\varepsilon \left(\#\mathcal{C} \#\mathcal{D}\right)^\varepsilon \left[ \left(\frac{\#\mathcal{C} \#\mathcal{D}}{\mu \nu} \right)^{3/4} + \frac{\#\mathcal{C}}{\mu} + \frac{\#\mathcal{D}}{\nu} \right].
    \]
    \end{lemma}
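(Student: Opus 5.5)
The plan is Wolff's argument for Lemma 1.4 in \cite{Wolff}, transcribed to quadratics as in \cite{Zahl}: turn rich rectangles into tangency incidences in the three-dimensional parameter space, prove a forbidden-configuration lemma, apply a Kővári--Sós--Turán bound, and upgrade it by a cutting (divide-and-conquer) argument.

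First I rescale. The bipartite hypothesis gives $t\le\tau(f,g)\le 100t$ for $f\in\mathcal C$, $g\in\mathcal D$, and $\tau\le t$ within each family. Fix $f_0\in\mathcal C$ and $g_0\in\mathcal D$, and apply an affine change of coordinates in the coefficient space $(a,b,c)$ (the analogue of the parabolic rescaling $\theta\mapsto\theta/\sqrt{t}$, $y\mapsto y/t$). This moves $\mathcal C$ and $\mathcal D$ to two unit-separated clusters of quadratics. The $(\eta,t)$-rectangles then become $(\eta/t,1)$-rectangles, i.e. $\eta/t$-neighbourhoods of unit-length arcs.

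Lemma \ref{le:jet} recasts the relation $f\sim R$ as: the $2$-jet of $f$ at the centre $\theta_R$ of $R$ lies in a box of dimensions $\eta\times\sqrt{\eta t}\times t$ around the jet of $R$. Hence a rectangle $R$ that is $(\mu,\nu)$-rich yields $\gtrsim\mu\nu$ pairs $(f,g)\in\mathcal C\times\mathcal D$ that are "$\eta$-tangent at $\theta_R$". By Lemma \ref{le: rect}, such a pair has $\Delta(f,g)\lesssim\eta$ and essentially one tangency point. Incomparability of the rectangles ensures that a fixed pair $(f,g)$ is counted by only $O(1)$ rectangles (up to $\delta^{-\varepsilon}$ losses). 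Writing $\mathbf T$ for the set of tangent pairs, this gives
\[
\mu\nu\,\#\tilde{\mathcal R}\lesssim \#\mathbf T .
\]
So it suffices to bound the number of tangent pairs among pairs that lie in rich rectangles. Equivalently, I bound $\#\tilde{\mathcal R}$ directly via the incidence graph between curves and rectangles.

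The key geometric input, which I expect to be the main obstacle, is a forbidden-configuration lemma: there do not exist $f_1,f_2,f_3\in\mathcal C$ and $g_1,g_2\in\mathcal D$ (or the symmetric configuration) such that every pair $(f_i,g_j)$ is tangent to rectangles in $\tilde{\mathcal R}$, with the rectangles pairwise incomparable and the $f_i$ genuinely distinct relative to the scale $\eta/t$. For exact quadratics this is algebraic. The condition "$g$ is tangent to $f$" is one equation in $\R^3$, namely that the discriminant of $f-g$ vanishes, so the quadratics tangent to a fixed $f$ form a quadric cone $\Sigma_f$. Three such cones together with the bipartite separation meet in a bounded number of points, so $K_{3,2}$-type configurations are impossible. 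The work lies in making this $\eta$-stable. I would first try Wolff's quantitative route: combine the jet description from Lemma \ref{le:jet} with the incomparability of the rectangles, and show that the $\eta$-neighbourhoods of three cones $\Sigma_{f_i}$ intersect transversally. Their intersection is then a union of $O(1)$ balls of radius $O(\eta)$, up to a loss of $(\#\mathcal C\#\mathcal D)^{\varepsilon}$ from dyadic scales.

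Given the forbidden configuration, the Kővári--Sós--Turán theorem on the bipartite incidence graph gives the weak bound
\[
\#\tilde{\mathcal R}\lesssim\frac{\#\mathcal C\,\#\mathcal D^{2/3}+\#\mathcal D}{\mu\nu}\cdot(\text{multiplicity}),
\]
which is not yet the $3/4$ exponent. To reach it, I embed $\mathcal D$ as points of $\R^3$ and $\mathcal C$ as the surfaces $\Sigma_f$ (thickened by $\eta$), and run the cutting argument of Clarkson et al. Take a random sample of $r$ surfaces and build a $1/r$-cutting of $\R^3$ into $O(r^3)$ cells, each crossed by $\lesssim\#\mathcal C/r$ surfaces. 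Apply the weak bound inside each cell, and sum using Hölder together with the richness thresholds $\mu,\nu$. Choosing $r$ to balance the cell terms against the boundary terms yields
\[
\#\tilde{\mathcal R}\lesssim_\varepsilon(\#\mathcal C\#\mathcal D)^{\varepsilon}\Big[\Big(\frac{\#\mathcal C\#\mathcal D}{\mu\nu}\Big)^{3/4}+\frac{\#\mathcal C}{\mu}+\frac{\#\mathcal D}{\nu}\Big].
\]
Here the linear terms $\#\mathcal C/\mu$ and $\#\mathcal D/\nu$ arise from the degenerate regimes $r\sim1$ and $r\sim\#\mathcal C$, exactly as in Szemerédi--Trotter. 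The $\eta$-thickness is handled as in \cite{Wolff}: run the cutting at scale $\gg\eta$, and control the rectangles lying near cell boundaries by the incomparability hypothesis. This costs the $\varepsilon$-loss.
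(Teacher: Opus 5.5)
The paper does not prove this lemma. It is quoted from Zahl (Lemma 5.18), that is, Wolff's Lemma 1.4. Your outline tries to reconstruct that cited argument, and its architecture (forbidden configuration, a K\H{o}v\'ari--S\'os--Tur\'an weak bound, cuttings) is the right one, but the central step is wrong as stated.

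\emph{The forbidden configuration is false.} Take $g_1\equiv0$, $g_2(s)=\epsilon s$ with $\epsilon$ a small multiple of $t$. Every $f_a(s)=\frac a2\big(s+\frac{\epsilon}{2a}\big)^2$ with $a\in[t,(1+c)t]$ is exactly tangent to $g_1$ at $-\epsilon/2a$, and to $g_2$ at $\epsilon/2a$, since $f_a-g_2=\frac a2\big(s-\frac{\epsilon}{2a}\big)^2$. So three such $f_a$ together with $g_1,g_2$ form a $K_{3,2}$ with six pairwise incomparable $(\eta,t)$-rectangles inside a $t$-bipartite pair. Two tangency cones meet in a conic, and three generically meet in two points, so at best $K_{3,3}$ is excluded.

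\emph{More seriously, no such statement is a property of the tangency graph.} Each $(\mu,\nu)$-rich rectangle puts a $K_{\mu,\nu}$ into that graph. So K\H{o}v\'ari--S\'os--Tur\'an cannot be applied to ``the bipartite incidence graph'', and your weak bound fails with bounded multiplicity (take one rectangle tangent to every curve).

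\emph{Your $\eta$-stable mechanism fails for the same reason.} Suppose the tangency points $\theta_i$ of $f_1,f_2,f_3$ on $g$ are a large constant times $\sqrt{\eta/t}$ apart, as incomparability permits. Then by Lemma \ref{le:jet} every $g+\frac s2(\cdot-\theta_1)^2$ with $0\le s\le ct$ is still tangent to each rectangle $R(f_i,g)$. Hence the triple intersection of the thickened cones contains a segment of length $\sim t$, i.e.\ $\sim t/\eta$ separated quadratics, not $O(1)$ balls of radius $\eta$. These quadratics are harmless only because their tangency rectangles with a fixed $f_i$ are all comparable.

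So the configuration has to be formulated with edge labels: all rectangles $R_{ij}\in\tilde{\mathcal R}$ pairwise distinct, also across different $g_j$. The counting then has to be run on rectangles rather than on the tangency graph. For example, grant a labelled statement excluding $M$ curves of $\mathcal C$ against three of $\mathcal D$. For each $c\in\mathcal C$, count the triples of distinct rectangles tangent to $c$, with one $d\in\mathcal D$ chosen from each. For fixed $(d_1,d_2,d_3)$ the admissible $c$ number $\lesssim M\mu$, by a greedy argument: two $c$'s sharing a label lie in one rectangle, which carries at most $2\mu$ curves of $\mathcal C$. H\"older then gives $\#\tilde{\mathcal R}\lesssim\frac{\#\mathcal D}{\nu}\big(\frac{\#\mathcal C}{\mu}\big)^{2/3}+\frac{\#\mathcal C}{\mu}$.

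The orientation matters. With $\mathcal D$ as points and the cones of $\mathcal C$ as surfaces, a weak bound with exponent $2/3$ on $\#\mathcal D$, as you wrote, gains nothing from a cutting, since $r^3\cdot\frac mr\big(\frac n{r^3}\big)^{2/3}=mn^{2/3}$ for every $r$. Only the version above balances to $(\#\mathcal C\#\mathcal D/\mu\nu)^{3/4}$.

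Finally, richness is not inherited by cells: the curves of $\mathcal D(R)$ are spread along a plate of length $\sim t$ that cell walls can cut. ``Control by incomparability'' gives no mechanism for handling this.

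As written, the proposal names the right tools but contains neither a valid key lemma nor a valid reduction to K\H{o}v\'ari--S\'os--Tur\'an.
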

    We also need the following lemma which dispenses with the separation hypothesis. We note that this is merely a technicality which is not present in \cite{Wolff},\cite{Zahl}, as the nature of the results proven in these works require \textit{a priori} $\eta$-separation assumptions on $\mathcal{C},\mathcal{D}$.
    \begin{lemma}[Incidence bound for curvilinear rectangles - 2]\label{le:incid}
        Let $\varepsilon>0,\ 1>t\geq\eta>0$, $\mu,\nu\in\Z_+$, $(\mathcal{C}, \mathcal{D})$ be a $t$-bipartite pair and let $\tilde{\mathcal{R}}$ be a collection of pairwise incomparable $(\mu,\nu)$-rich (with respect to $(\mathcal{C},\mathcal{D})$\ ) $(\eta,t)$-rectangles. Then
    \[
    \#\tilde{\mathcal{R}} \lesssim_\varepsilon \left(\#\mathcal{C} \#\mathcal{D}\right)^\varepsilon \left[ \left(\frac{\#\mathcal{C} \#\mathcal{D}}{\mu \nu} \right)^{3/4} + \frac{\#\mathcal{C}}{\mu} + \frac{\#\mathcal{D}}{\nu} \right].
    \]
    \end{lemma}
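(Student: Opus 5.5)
We reduce Lemma \ref{le:incid} to Lemma \ref{le:zahl} by replacing $\mathcal{C}$ and $\mathcal{D}$ with $\eta$-separated subfamilies, at the cost of adjusting the richness parameters.

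First, cover $B_\tau(0,100)$, viewed as a ball in $\R^3$ via the coefficient identification, by a grid of cubes of side $c\eta$, where $c$ is a small absolute constant. For a rectangle $R\in\tilde{\mathcal{R}}$, whether a quadratic is tangent to $R$ is stable under $O(c\eta)$ perturbations of its coefficients. Indeed, if $f\sim R$ and $\tau(f,\tilde f)\le c\eta$, then $R\subseteq f^{C\eta}(J)\subseteq \tilde f^{(C+c)\eta}(J)$, so $\tilde f$ is tangent to $R$ with a slightly larger constant. This harmless enlargement of the constant in Definition \ref{def:tangency} only affects implicit constants in Lemma \ref{le:zahl}.

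Next, split each family into $O(1)$ subfamilies so that curves in the same grid cube are grouped together. Concretely, two-colour the grid in each coordinate, giving $8$ colour classes. Within a colour class, distinct occupied cubes are $\gtrsim\eta$ apart. Selecting one representative from each occupied cube then yields $\eta$-separated families.

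The main difficulty is multiplicity: one cube may contain many curves, so choosing a single representative destroys richness counts. We handle this by dyadic pigeonholing on cube occupancy. Write $\mathcal{C}=\bigcup_k\mathcal{C}_k$, where $\mathcal{C}_k$ consists of curves lying in cubes with occupancy in $[2^k,2^{k+1})$, and define $\mathcal{D}_l$ analogously.

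For each $R$, we have $\#\mathcal{C}(R)\ge\mu$. There are $O(\log(\#\mathcal{C}))$ values of $k$, so we may pick $k(R)$ with $\#\mathcal{C}_{k}(R)\gtrsim \mu/\log\#\mathcal{C}$, and similarly $l(R)$ for $\mathcal{D}$. We then pigeonhole $R$ to a fixed pair $(k,l)$, and further by dyadic richness. This loses a factor $\log(\#\mathcal{C})\log(\#\mathcal{D})$, which is absorbed by $(\#\mathcal{C}\#\mathcal{D})^\varepsilon$.

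Now let $\mathcal{C}'$ be the set of representatives of cubes used in $\mathcal{C}_k$, so that $\#\mathcal{C}'\sim\#\mathcal{C}_k/2^k$. Since a tangent curve makes its cube's representative tangent, $R$ is $\gtrsim(\mu 2^{-k}/\log)$-rich with respect to $\mathcal{C}'$. Define $\mathcal{D}'$ in the same way with parameter $2^l$.

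Being subfamilies, $\mathcal{C}'$ and $\mathcal{D}'$ still form a $t$-bipartite pair, up to constant enlargement, which Lemma \ref{le:zahl} tolerates. Applying Lemma \ref{le:zahl} to $(\mathcal{C}',\mathcal{D}')$ with richness $(\mu',\nu')=(\mu2^{-k},\nu2^{-l})$ up to logarithmic factors gives
\[
\#\tilde{\mathcal R}\lesssim_\varepsilon(\#\mathcal{C}\#\mathcal{D})^{2\varepsilon}\Big[\Big(\tfrac{\#\mathcal{C}_k2^{-k}\#\mathcal{D}_l2^{-l}}{\mu2^{-k}\nu2^{-l}}\Big)^{3/4}+\tfrac{\#\mathcal{C}_k2^{-k}}{\mu2^{-k}}+\tfrac{\#\mathcal{D}_l2^{-l}}{\nu2^{-l}}\Big].
\]
The powers of $2$ cancel exactly, and the right-hand side is at most the desired bound since $\#\mathcal{C}_k\le\#\mathcal{C}$ and $\#\mathcal{D}_l\le\#\mathcal{D}$.

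A minor point is that pairwise incomparability of the rectangles is unaffected, since it is a property of $\tilde{\mathcal R}$ alone. The step needing the most care is the perturbation stability of tangency. This must be checked against the constant $C$ in the definition of comparability, so that the proof of Lemma \ref{le:zahl} still applies with enlarged constants.
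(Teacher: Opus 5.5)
Your proposal is correct and follows essentially the same route as the paper. Both arguments partition coefficient space into $\eta$-scale cells, dyadically pigeonhole each rectangle's tangent curves by cell occupancy and then by richness, keep one representative per cell to obtain separated subfamilies, and apply Lemma \ref{le:zahl}, with the occupancy factors ($2^k,2^l$ for you, $A,B$ in the paper) cancelling. Your version is more careful than the paper's on two points: you check explicitly that tangency survives perturbation, and you use only the lower bound on the new richness. One fix is needed, though: side-$c\eta$ cubes with a two-colouring give only $c\eta$-separation. To get the $\eta$-separation that Lemma \ref{le:zahl} requires, use $O(c^{-1})$ colours per coordinate, or cubes of side $\sim\eta$ (an $O(\eta)$ perturbation still only enlarges the tangency constant).
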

    \begin{proof}[Proof of Lemma \ref{le:incid}]
        Let $\mathcal{C},\mathcal{D}$ be as in the statement of the lemma. We will find suitable $\eta$-separated subfamilies $\mathcal{C}_\eta,\mathcal{D}_\eta$ such that each $\tilde{R}\in\tilde{\mathcal{R}}$ is $(\tilde{\mu},\tilde{\nu})$-rich with respect to $(\mathcal{C}_\eta,\mathcal{D}_\eta)$, where $\tilde{\mu},\tilde{\nu}$ satisfy
        $$\frac{\#\mathcal{C}_\eta}{\tilde{\mu}}\sim\log\#\mathcal{C}\Big(\frac{\#\mathcal{C}}{\mu}\Big),\ \ \frac{\#\mathcal{D}_\eta}{\tilde{\nu}}\sim\log\#\mathcal{D}\Big(\frac{\#\mathcal{D}}{\nu}\Big).$$
        The conclusion of the lemma will immediately follow by applying Lemma \ref{le:zahl} for $\mathcal{C}_\eta,\mathcal{D}_\eta$. We identify each quadratic with its triplet of coefficients. The metric $\tau$ can be identified with the Euclidean metric in $\R^3$ via this mapping. Under this identification, we may decompose $\mathcal{C}$ and $\mathcal{D}$ as follows:
         \begin{gather*}
         \mathcal{C}=\bigsqcup_{B_\Delta}\mathcal{C}\cap B_\Delta=\bigsqcup_{1\leq A\leq\#\mathcal{C}}\bigsqcup_{B_\Delta:\ \#\mathcal{C}\cap B_\Delta\sim A}\mathcal{C}\cap B_\Delta=:\bigsqcup_{1\leq A\leq\#\mathcal{C}}\mathcal{C}_A,\\
         \mathcal{D}=\bigsqcup_{B_\Delta} \mathcal{D}\cap B_\Delta=\bigsqcup_{1\leq B\leq\#\mathcal{D}}\bigsqcup_{B_\Delta:\ \#\mathcal{D}\cap B_\Delta\sim B}\mathcal{D}\cap B_\Delta=:\bigsqcup_{1\leq B\leq\#\mathcal{D}}\mathcal{D}_B,
     \end{gather*}
        By dyadic pigeonholing, for each $\tilde{R}\in\tilde{\mathcal{R}},$ there exist popular $A_{\tilde{R}},B_{\tilde{R}}$ such that
        \begin{gather*}
            \mu=\sum_{c\in\mathcal{C},\ c\parallel R }1\approx_{\#\mathcal{C}}\sum_{c\in\mathcal{C}_{A_{\tilde{R}}},\ c\parallel R} 1,\\
            \nu=\sum_{d\in\mathcal{D},\ d\parallel R }1\approx_{\#\mathcal{D}}\sum_{d\in\mathcal{D}_{B_{\tilde{R}}},\ d\parallel R} 1,
        \end{gather*}
     and we have
     $$\tilde{\mathcal{R}}=\bigsqcup_{A,B}\{R\in\tilde{\mathcal{R}}:\ A_{\tilde{R}}=A,\ B_{\tilde{R}}=B\}=:\bigsqcup_{A,B}\tilde{\mathcal{R}}_{A,B}. $$
     By another dyadic pigeonholing, we find $A,B$ such that that $\#\tilde{\mathcal{R}}\lessapprox_{\#\mathcal{C},\#\mathcal{D}}\#\tilde{\mathcal{R}}_{A,B}. $ The family $\tilde{\mathcal{R  }}_{A,B}$ consists of $({\mu},{\nu})$-rich (with respect to $\mathcal{C}_A,\mathcal{D}_B$) $(\eta,t)$-rectangles. Let $\mathcal{C}_\eta, D_\eta$ be maximal $\eta$-separated subfamilies of $\mathcal{C}_A$ and $\mathcal{B}_B$ respectively. By definition, we can write 
     $$\mathcal{C}_A=\bigsqcup_{c\in\mathcal{C}_\eta} B_d(c,\eta),\ \ \mathcal{D}_B=\bigsqcup_{d\in \mathcal{D}_\eta} B_d(d,\eta).$$
     and it follows by definition of $\mathcal{C}_A,\mathcal{D}_B$ that $\#\mathcal{C}_A=A\#\mathcal{C}_\eta$ and $\#\mathcal{D}_B   = B \#\mathcal{D}_\eta$. By pigeonholing, there exist $\tilde{\mu},\tilde{\nu}$ and $\tilde{\mathcal{R}}'\subseteq\tilde{\mathcal{R}}_{A,B}$ such that $\#\tilde{\mathcal{R}}'\approx_{\#\mathcal{C},\#\mathcal{D}}\#\tilde{\mathcal{R}}_{A,B} $ and
     each $\tilde{R}\in\tilde{\mathcal{R}}' $ is $(\tilde{\mu},\tilde{\nu})$-rich with respect to $(\mathcal{C}_\eta , \mathcal{D}_\eta)$. By definition of $\mathcal{C}_A, \mathcal{D}_B$ and $\tilde{\mu},\tilde{\nu}$ we have
     \begin{gather*}
         \mu\approx_{\#\mathcal{C}}\sum_{c\in \mathcal{C}_A,\ c\parallel\tilde{R}}1=\sum_{\tilde{c}\in\mathcal{C}_\eta,\ \tilde{c}\parallel\tilde{R}}\sum_{c\in\mathcal{C}_A\cap B_d(\tilde{c},\eta)}1\sim A\tilde{\mu},\\
         \nu\approx_{\#\mathcal{D}}\sum_{d\in \mathcal{D}_B,\ d\parallel\tilde{R}}1=\sum_{\tilde{d}\in\mathcal{D}_\eta,\ \tilde{d}\parallel\tilde{R}}\sum_{d\in\mathcal{D}_B\cap B_d(\tilde{d},\eta)}1\sim B\tilde{\nu}.
     \end{gather*}
    and thus
    $$\frac{\#\mathcal{C}_\eta}{\tilde{\mu}}\sim\log\#\mathcal{C}\Big(\frac{\#\mathcal{C}}{\mu}\Big),\ \ \frac{\#\mathcal{D}_\eta}{\tilde{\nu}}\sim\log\#\mathcal{D}\Big(\frac{\#\mathcal{D}}{\nu}\Big),$$
    as claimed.
   \end{proof}
    We note that neither Lemma \ref{le:zahl} or Lemma \ref{le:incid} are directly applicable, since the rectangles $R\in\cR$ are not $(\delta,t)$-rectangles for some $t\in[\delta,1]$. We deal with this issue in the following subsection.
   \subsection{Coarse-Scale Rectangles}\label{subsec:coarse}
    We will rescale the rectangles $R\in\cR$ to rectangles $\tilde{R}$ that satisfy the hypotheses of Lemma \ref{le:incid}. We distinguish between the following two cases:\\\\
    \textbf{i) }$\Delta=\delta$. We set $\sigma=\delta$. Each $R\in\mathcal{R}$ is a rectangle of dimensions $\sigma\times\sqrt{\frac{\sigma}{\rho}}$\\
    \textbf{ii) } ${\Delta>\delta}$. We set $\sigma=\Delta$. Each $R\in\mathcal{R}$ is a rectangle of dimensions $\delta\times\frac{\delta}{\sqrt{\rho\sigma}}$.\\\\
    We rescale each $R\in\mathcal{R}$ about its ``central curve" to a $(\sigma,\rho)$-rectangle which we denote by $\Tilde{R}$. To be precise, if $R=f^\delta (J)$ for some quadratic $f$ and an interval $J\subseteq I$, then $\tilde{R}:=f^\sigma (\tilde{J})$, where $\tilde{J}$ is the interval with the same midpoint as $J$ and length $\frac{\sigma}{\delta}|J|=\sqrt{\frac{\sigma}{\rho}}$.\\\\
    We denote the family of coarse rectangles by $\Tilde{\mathcal{R}}$. For each $U\in\tilde{ \mathcal{R}}$, let $$\mathcal{R}(U)=\{R\in\mathcal{R}: \tilde{R}\ \mbox{ comparable to }  U\}. $$
    Since the rectangles $R\in\mathcal{R}$ are pairwise disjoint, comparing dimensions shows that $$\mathcal{R}(U)\leq\frac{\sigma^2}{\delta^2}\leq\delta^{-2},$$ for all $U\in\tilde{\mathcal{R}}$. By a standard covering lemma, we may choose a subfamily $\tilde{\mathcal{R}}_1\subseteq\tilde{\mathcal{R}}$ of pairwise incomparable rectangles such that each  $\tilde{R}\in\tilde{\mathcal{R}}$  is contained in the dilate of some $\tilde{R}_1\in\tilde{\mathcal{R}}_1$ by a (common) fixed constant. Thus, we have
$$\#\mathcal{R}=\sum_{\tilde{R}\in\tilde{\mathcal{R}}_1}\#\mathcal{R}(\tilde{R}).$$
  By dyadic pigeonholing, there exists a constant $M\leq\frac{\sigma^2}{\delta^2}$ and a subfamily $\tilde{\mathcal{R}}_2\subseteq\tilde{\mathcal{R}}_1$ such that $\#\tilde{\mathcal{R}}_2\approx_\delta\#\tilde{\mathcal{R}}_1$ and $\#\mathcal{R}(\tilde{R})\sim M$ for all $\tilde{R}\in\tilde{\mathcal{R}}_2$. Hence,
  $$\#\mathcal{R}\approx_\delta M\#\tilde{\mathcal{R}}_2.$$
We now prove a bound for $M$. For every $\tilde{R}\in\tilde{\mathcal{R}}_2$ and $R\in\mathcal{R}(\tilde{R})$ we have that $f\sim R$ implies $f\sim\tilde{R}$. Indeed, if $R=h^\delta(J), \tilde{R}=h^\sigma(\tilde{J})$, where $J, \tilde{J}$ are concentric intervals of lengths $\frac{\delta}{\sqrt{\rho\sigma}}$ and $\sqrt{\frac{\sigma}{\rho}}$ respectively and $f\sim R$, then by Lemma \ref{le:jet} we have
\begin{gather*}
    |f(\theta)-h(\theta)|\leq\delta\leq\sigma,\ |f'(\theta)-h'(\theta)|\leq\sqrt{\delta\frac{\sigma\rho}{\delta}}=\sqrt{\sigma\rho},
\end{gather*}
and $|f''-h''|\leq\tau(f,h)\lesssim\rho$. Thus, by Lemma \ref{le:jet} $\tilde{R}\sim f$. Moreover, since the rectangles $R\in\mathcal{R}$ are pairwise disjoint, each pair $(f,g)\in\mathcal{F}\times\mathcal{G}$ can only be in $\mathbf{T}_{\mathcal{F},\mathcal{G}}(R)$ for an absolutely bounded number of $R\in\mathcal{R}$. Thus, for every $\tilde{R}\in\tilde{\mathcal{R}}_2$ we have
    \begin{gather*}
        M\lambda_1\lambda_2\sim\sum_{ R\in\mathcal{R}(\tilde{R})}\#\mathbf{T}_{\mathcal{F},\mathcal{G}}(R)= \sum_{R\in\mathcal{R}(\tilde{R})}\sum_{f,g\sim R}1 \\
        \leq\sum_{f,g\sim\tilde{R}}\sum_{R\sim f,g}1 \lesssim\sum_{(f,g)\sim\tilde{R}}1= \#\mathbf{T}_{\mathcal{F},\mathcal{G}}(\tilde{R}).
    \end{gather*}
     By another pigeonholing, there exist integers $\mu,\nu>0$ and $\tilde{\mathcal{R}}_3$ such that $\#\tilde{\mathcal{R}}_3\approx_\delta\#\tilde{\mathcal{R}}_2 $ and each $\tilde{R}\in\tilde{\mathcal{R}}_3 $ is $(\mu,\nu)$-rich with respect to $(\mathcal{F},\mathcal{G})$. Thus, $ M\lessapprox\mu\nu{\lambda_1}^{-1}{\lambda_2}^{-1}.$
     We take a log-convex combination of this and $M\leq\frac{\sigma^2}{\delta^2}$ to get $$M\lessapprox\sigma^{1/2}\delta^{-1/2}\mu^{3/4}\nu^{3/4}\lambda_1^{-3/4}\lambda_2^{-3/4}. $$
     Moreover, by construction we have $\#{\mathcal{R}}\approx_\delta M\#\tilde{\mathcal{R}}_3$. From Lemma \ref{le:incid} (for $t=\rho, \eta=\sigma$ and $\mu,\nu$) we have
    \begin{equation}\label{eq:wolff}
        \#\tilde{\mathcal{R}}_3\lesssim\big(\#\mathcal{F}\#\mathcal{G}\big)^\varepsilon\Bigg[\Big(\frac{\#\mathcal{F}\#\mathcal{G}}{\mu\nu}\Big)^{3/4}+\frac{\#\mathcal{F}}{\mu}+\frac{\#\mathcal{G}}{\nu}\Bigg].
    \end{equation} 
    At this point, we may assume without loss of generality that
    \begin{equation*}
        \frac{\#\cG}{\nu}\leq\frac{\#\cF}{\mu}.
    \end{equation*}
    Using this assumption and multiplying \eqref{eq:wolff} by $\mu^{3/4}\nu^{3/4}$, we get
    \begin{gather}\label{eq:inc3}                \lambda_1^{3/4}\lambda_2^{3/4}\#\mathcal{R}\lessapprox\lambda_1^{3/4}\lambda_2^{3/4}M\#\tilde{\mathcal{R}}_3\lessapprox\sigma^{1/2}\delta^{-1/2}\Big(\#\mathcal{F}^{3/4}\#\mathcal{G}^{3/4}+\mu^{-1/4}\nu^{3/4}\#\mathcal{F}\Big).
    \end{gather}
    This would be sufficient for proving Lemma \ref{le:aux}, if we knew that \begin{gather} \label{eq:pencil} \mu^{-1/4}\nu^{3/4}\#\mathcal{F}\lesssim_\varepsilon\delta^{-\varepsilon}\big[\#\mathcal{F}^{3/4}\#\mathcal{G}^{3/4}+\#\mathcal{F}\big].
    \end{gather}
    However, this is a non-trivial estimate and can in fact fail if one does not assume that $\mathcal{F}$ and $\mathcal{G}$ are $(\delta,\alpha)$-broad for $\alpha\geq1/4$, as we shall show in Section \ref{sec:4} (Proposition \ref{thm:clam}). The rest of the proof is dedicated to showing (\ref{eq:pencil}). This is where the broadness hypothesis of Theorem \ref{thm:kakeya} will be used. \\\\
    We claim that if we have either
    \begin{equation}\label{eq:cond}
        \frac{\#\cF}{\mu}\lesssim_\varepsilon\delta^{-\varepsilon}\Big(\frac{\#\cG}{\nu}\Big)^3\ \mbox{ or }\ \nu\lesssim_\varepsilon\delta^{-\varepsilon}\mu^{1/3},
    \end{equation}
     then \eqref{eq:pencil} holds. Indeed,\\\\
     \textbf{i)} If $\frac{\#\cF}{\mu}\lesssim_\varepsilon\delta^{-\varepsilon}\Big(\frac{\#\cG}{\nu}\Big)^3$ holds, then we rearrange to get 
     $$\mu^{-1/4}\nu^{3/4}\#\cF\lesssim\delta^{-\varepsilon}\#\cF^{3/4}\#\cG^{3/4},$$
     which in turn implies \eqref{eq:pencil}.\\\\
     \textbf{ii)} If $ \nu\lesssim\delta^{-\varepsilon}\mu^{1/3}$ holds, then we have
     $$\mu^{-1/4}\nu^{3/4}\#\cF\lesssim\delta^{-\varepsilon}\#\cF,$$
     which also implies \eqref{eq:pencil}.\\\\
     We therefore need to show that one of the two inequalities in \eqref{eq:cond} always holds.
     If $\#\cG\lesssim t^{-1/2}$, then the $(\delta,\frac{1}{2})$-broadness hypothesis for $\#\cG$ implies $$\nu\lesssim1\lesssim\delta^{-\varepsilon}\mu^{1/3}, $$ and in this case we are done. Therefore, it suffices to prove \eqref{eq:cond} under the additional assumption that $\#\cG\gtrsim t^{-1/2}$. In this case, the $(\delta,\frac{1}{2})$-broadness hypothesis on $\cG$ yields
     $$\nu\lesssim t^{1/2}\#\cG.$$
     We end this subsection with the following simple observation:
     \begin{gather*}
        \frac{\#\mathcal{F}}{\mu}\leq\frac{1}{\mu}\sum_{\tilde{R}\in\tilde{\cR}}\{f\in\cF:\ f\sim\tilde{R}\}\lesssim\frac{1}{\mu}\mu\#\tilde{\cR}\approx\#\tilde{\mathcal{R}}_3,
    \end{gather*}
     so \eqref{eq:cond} would follow from
    \begin{gather}\label{eq:final3} 
        \#\tilde{\mathcal{R}}_3\lesssim C_\varepsilon\delta^{-\varepsilon}\Big(\frac{\#\mathcal{G}}{\nu}\Big)^3.
    \end{gather}
    In the next subsection, we will prove \eqref{eq:final3} (under the assumption that $\#\cG\gtrsim t^{-1/2}$) and thus conclude the proof of Theorem \ref{thm:kakeya}.
    
    \subsection{The Broadness Argument}\label{subsec:broadness}
    It is important to note here that the rectangles $\tilde{R}\in\tilde{\mathcal{R}}_3$ are adapted to tangent pairs $(f,g)\in\mathcal{F}\times\mathcal{G}$. However, it is still possible for many curves $g\in\mathcal{G}$ to be tangent to each other at longer lengths. This potential mismatch between tangency lengths is a phenomenon unique to the bilinear setting; one which was not present in previous works that involve tangency-pair-counting arguments withing a \textit{single} family of curves, such as \cite{Wolff}\cite{Zahl},\cite{Schlag}. In order to deal with this obstacle, we need to identify a \textit{broad} scale for the tangencies between curves within the same family. The following lemma is a ``two-ends reduction" similar in spirit to Step $2$ in the proof of Theorem $1.11'$ in \cite{Zahl2}. It allows us to find the largest $L\in[\sqrt{\sigma/\rho},1]$ such that most $g\in\cG$ are tangent to a rectangle of thickness $\sigma$ and length $L$.
    \begin{lemma}\label{le:robust}
        Fix a constant $B\in[1, \rho^{-\varepsilon}]$. Let $\tilde{\mathcal{R}}=h^\sigma(J)\in\tilde{\mathcal{R}}_3$, where $J$ is an interval of length $\sqrt{\sigma/\rho}$ and midpoint $\theta$. For $r\in[\sigma,\rho]$, let $J_r$ denote the interval centred at $\theta$ with length $\sqrt{\sigma/r}$ and let $S_{\tilde{R}}^r=h^\sigma(J_r)$ be the ``$r$-lengthening" of $\tilde{R}$. Then there exist $t_{\tilde{R}}\in[\sigma,\rho]$ and a $(\sigma,t_{\tilde{R}})$-rectangle $S_{\tilde{R}}$ that contains $\tilde{R}$ (up to dilation by a factor of $10$) such that 
        \begin{gather}\label{eq:S broad}
            \#\cG( S_{\tilde{R}})\geq Bt_{\tilde{R}}^\varepsilon\#\cG(\tilde{R})\sim t^\varepsilon_{\tilde{R}}\nu,
        \end{gather}
        and for every $r< t_{\tilde{R}}$ and every $(\sigma,r)$-rectangle $S$ that contains $\tilde{R}$ (up to dilation by a factor of $10$) we have
        \begin{gather*}
            \#\cG(S)\leq Br^\varepsilon\#\cG(\tilde{R}).
        \end{gather*}
    \end{lemma}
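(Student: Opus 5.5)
The plan is a stopping-time (maximality) argument over the one-parameter family of lengthenings $S^r_{\tilde R}$, $r\in[\sigma,\rho]$, run from the shortest rectangle (largest $r$) toward longer ones. First record the trivial facts. Every $(\sigma,r)$-rectangle containing $\tilde R$ (up to a dilation by $10$) is comparable to $S^r_{\tilde R}$. By Lemma~\ref{le:jet}, tangency to $S^r_{\tilde R}$ for smaller $r$ is a \emph{stronger} condition, since the bounds on $|g'-h'|$ and $|g''-h''|$ shrink. Hence $r\mapsto\#\cG(S^r_{\tilde R})$ is essentially nondecreasing in $r$, up to the constant in the tangency definition. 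Also $\#\cG(S^\rho_{\tilde R})\sim\#\cG(\tilde R)\sim\nu$, because $S^\rho_{\tilde R}=\tilde R$.

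Next, define $t_{\tilde R}$ as the smallest dyadic $r\in[\sigma,\rho]$ with $\#\cG(S^r_{\tilde R})\ge B r^\varepsilon\#\cG(\tilde R)$. At $r=\rho$ this condition reads $\nu\ge B\rho^\varepsilon\nu$, which holds because $B\le\rho^{-\varepsilon}$; so the set of such $r$ is nonempty and $t_{\tilde R}$ exists. Then set $S_{\tilde R}:=S^{t_{\tilde R}}_{\tilde R}$, which gives \eqref{eq:S broad} at once. The final comparison $\sim t^\varepsilon_{\tilde R}\nu$ uses $B\ge1$ and $\#\cG(\tilde R)\sim\nu$ from richness.

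For the second property, take $r<t_{\tilde R}$ and any $(\sigma,r)$-rectangle $S\supseteq\tilde R$. Let $r'$ be the dyadic point with $r'\le r<2r'$. By minimality of $t_{\tilde R}$ we have $\#\cG(S^{r'}_{\tilde R})<B r'^\varepsilon\#\cG(\tilde R)$. Because $S$ is comparable to $S^{r}_{\tilde R}$, Lemma~\ref{le:jet} shows that every $g\sim S$ is tangent to a $C$-enlargement of $S^{r'}_{\tilde R}$. This yields $\#\cG(S)\lesssim B r^\varepsilon\#\cG(\tilde R)$.

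The main obstacle is the bookkeeping of the constants. Passing between comparable rectangles, dyadic neighbours and the fixed tangency constant $C$ costs absolute constant factors, and these would break the exact inequality $\#\cG(S)\le Br^\varepsilon\#\cG(\tilde R)$. I would handle this in one of two ways. One option is to define the stopping condition directly through the supremum over all $(\sigma,r)$-rectangles $S$ containing $10^{-1}\tilde R$, with $r$ continuous rather than dyadic, and take $t_{\tilde R}$ to be the infimum of admissible $r$ (the counts are integer-valued, so the infimum is attained up to a factor of $2$). The other is to absorb the constants by running the stopping time with $B/C$ in place of $B$. Either way, the only real input is the monotonicity coming from Lemma~\ref{le:jet}.
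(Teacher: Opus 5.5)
Your main argument breaks at the first ``trivial fact'': it is not true that every $(\sigma,r)$-rectangle containing $\tilde R$ (up to dilation) is comparable to the canonical lengthening $S^r_{\tilde R}=h^\sigma(J_r)$.

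If $S=g^\sigma(J')$ contains $\tilde R$, Lemma~\ref{le:jet} gives only $|g(\theta)-h(\theta)|\lesssim\sigma$, $|g'(\theta)-h'(\theta)|\lesssim\sqrt{\sigma\rho}$ and $|g''-h''|\lesssim\rho$. Comparability with $S^r_{\tilde R}$ would instead force $|g'(\theta)-h'(\theta)|\lesssim\sqrt{\sigma r}$ and $|g''-h''|\lesssim r$. For $r\ll\rho$ there are roughly $(\rho/r)^{3/2}$ pairwise incomparable $(\sigma,r)$-rectangles through $\tilde R$, branching off in different directions. The monotonicity you extract from Lemma~\ref{le:jet} holds only along the single branch of nested rectangles centred on $h$. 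So a stopping time run on $r\mapsto\#\cG(S^r_{\tilde R})$ gives no control of $\#\cG(S)$ for the other extensions, and the second conclusion does not follow.

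This is not a technicality. In the narrow case, the contradiction comes from applying the second conclusion to $S^*=g_*^\sigma(J^*)$, which is centred on $g_*\in\cG(S)$ rather than on $h$. Since $|g_*''-h''|$ can be of size $t_{\tilde R}$ while $S^*$ has length $\sqrt{\sigma K/t_{\tilde R}}$, the two curves separate by about $\sigma K\gg\sigma$ along $S^*$. Hence $S^*$ is in general not comparable to $S^{t_{\tilde R}/K}_{\tilde R}$. Running the stopping time with $B/C$ in place of $B$ does nothing about this.

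The repair is the ``first option'' in your last paragraph, and it is exactly the paper's proof. Call $r$ admissible if \emph{some} $(\sigma,r)$-rectangle $S$ containing $\tilde R$ satisfies $\#\cG(S)\ge Br^\varepsilon\#\cG(\tilde R)$. Then $r=\rho$ is admissible via $S=\tilde R$, because $B\rho^\varepsilon\le 1$. Let $t_{\tilde R}$ be the least admissible dyadic value and $S_{\tilde R}$ a witnessing rectangle.

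The first conclusion is then just the admissibility of $t_{\tilde R}$. The second is literally the non-admissibility of every smaller (dyadic) $r$. No monotonicity, no appeal to Lemma~\ref{le:jet}, and no constant bookkeeping are needed. The reason this formulation is necessary is the branching described above, not the constants.

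The paper works with dyadic $r$, which suffices because $t_{\tilde R}/K$ is again dyadic when $K=2^{\lceil 1/\varepsilon\rceil}$. Your continuous variant would cover all $r<t_{\tilde R}$, at the price of the attainment issue you mention.
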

    \begin{proof}
         If we take $S=\tilde{R}$, then we have
        $$\#\cG(S) =\#\cG(\tilde{R})\geq B\rho^\varepsilon\#\cG(\tilde{R}),$$
        by choice of $B$. Thus, the set of $r\in[\sigma,\rho]$ such that there exists a $(\sigma,r)$-rectangle $S$ which contains $R$ and satisfies 
        \begin{equation}\label{eq:ebroad}
            \#\cG(S) \geq B r^\varepsilon\#\mathcal{G}(\tilde{R}).
        \end{equation}
        is non-empty. Let $t_{\tilde{R}}$ be the infimum of that set (over dyadic values) and let $S_{\tilde{R}}$ be a $(\sigma,t_{\tilde{R}})$-rectangle with the property
        $$\#\cG(S_{\tilde{R}})\geq B t_{\tilde{R}}^\varepsilon\#\cG(\tilde{R}).$$
        By definition, for every (dyadic) $r\in[\sigma,t_{\tilde{R}})$ and every $(\sigma,r)$-rectangle that contains $\tilde{R}$ we have
        \begin{equation}\label{eq:zahlbroad}
            \#\cG(S) \leq B r^\varepsilon\#\mathcal{G}(\tilde{R}).
        \end{equation}
        \end{proof}
        By dyadic pigeonholing, we may find $t\in[\sigma,1]$ and $\tilde{\mathcal{R}}_4\subset\tilde{\mathcal{R}}_3$ such that 
        $\#\tilde{\mathcal{R}}_4\approx\#\tilde{\mathcal{R}}_3$ and for each $\tilde{R}\in\tilde{\mathcal{R}}_4$ we have $t_{\tilde{R}}\in[t,2t]$. Let
        $$\mathcal{S}_0:=\{S_{\tilde{R}}: \tilde{R}\in\tilde{\mathcal{R}}_4\}.$$
        By a standard Vitali-type covering lemma, we may choose a maximal subfamily $\mathcal{S}\subset\mathcal{S}_0$ that consists of pairwise incomparable $(\sigma,t)$-rectangles and such that each $\tilde{R}\in\tilde{\cR}_4$ is contained in the 10-fold dilate of some $S\in\cS$.\\\\
        We now do a broad/narrow analysis. Let $K\geq 1$ be a large constant to be chosen later. We say that a pair $(g_1,g_2)\in\cG$ is $K$-transverse if 
        $$\frac{\sigma t}{K}\leq\Delta(g_1,g_2)\leq\sigma t.$$
        We say that $S\in\cS$ is $K$-broad if at least half of the pairs $(g_1,g_2)\sim S$ are $K$-transverse. We say that $S $ is $K$-narrow if it is not $K$-broad. If $(g_1,g_2)$ is $K$-transverse then by Lemma \ref{le: rect}, $g_1^\sigma\cap g_2^\sigma$ is the union of at most two rectangles of dimensions $\sigma\times\frac{\sigma}{\sqrt{\Delta(g_1,g_2)\rho}}\sim\sigma\times\sqrt{\frac{\sigma}{t\rho/K}}$. Thus, by comparing lengths, we get
        \begin{gather*}
            \sum_{S\sim(g_1,g_2)}1\lesssim \sqrt{\frac{K}{\rho}}.
        \end{gather*}
        Hence, we have
        \begin{gather}\label{eq:Sbroad}
            t^{2\varepsilon}\nu^2\#\cS_{broad} \lesssim
            \sum_{S \text{ broad}}\sum_{(g_1,g_2)\in\cG^2,\ (g_1,g_2)\sim S}1
            \lesssim\sum_{(g_1,g_2)\ K\text{-transverse}}\sum_{S\sim (g_1,g_2)}1\lesssim\#\cG^2\sqrt{\frac{K}{\rho}}.
        \end{gather}
        Now suppose $S=h^\sigma(J)\in\cS$ be narrow, where $S=S_{\tilde{R}}$ for some  $\tilde{R}\in\tilde{\cR}_4$. Then for at least half of the pairs $(g_1,g_2)\in\cG(S)^2$ we have $\Delta(g_1,g_2)\leq\frac{\sigma t}{K}\leq\frac{\sigma t_{\tilde{R}} }{K}$. Hence,
        \begin{gather*}
            \#\cG(S)^2/2\leq\sum_{\Delta(g_1,g_2)\leq\frac{\sigma t_{\tilde{R}}}{K}}1=\sum_{g_1\in\cG(S)}\sum_{g_2:\ \Delta(g_1,g_2)\leq\frac{\sigma t_{\tilde{R}}}{K}}1.
        \end{gather*}
        By pigeonholing, there exists a $g_*\in\cG(S)$ such that 
        $$\#\{g\in\cG(S):\ \Delta(g,g_*)\leq\frac{\sigma t_{\tilde{R}}}{K}\}\geq\#\cG(S)/2.$$
        Let $J^*$ be the interval concentric to $J$ with length $\sqrt{\frac{\sigma}{t_{\tilde{R}} /K}}$ and let $S^*:=g_*^\sigma(J^*)$.
        $S^*$ is a $(\sigma,\frac{t_{\tilde{R}} }{K})$-rectangle and $10{S}^*\supset S\supset\tilde{R}$. If $\Delta(g,g_*)\leq\frac{\sigma t_{\tilde{R}}  }{K}$, then as in the proof of Lemma \ref{le:tangency}, we may conclude that $g\sim S^*$. Therefore, 
        \begin{gather*}
            \#\cG(S^*)\geq\#\{g\in\cG(S):\ \Delta(g,g_*)\leq\frac{\sigma t_{\tilde{R}} }{K}\} \geq\frac{\#\cG(S)}{2}\geq\frac{B}{2} t_{\tilde{R}}^{\varepsilon} \#\cG(\tilde{R}).
        \end{gather*}
        Thus, by choosing $K=2^{\lceil 1/\varepsilon\rceil}$ we get
        \begin{gather}\label{eq:broadness}
            \#\cG(S^*)\geq \frac{B}{2} t_{\tilde{R}}^\varepsilon \#\cG(\tilde{R})\geq B\left(\frac{t_{\tilde{R}} }{K}\right)^\varepsilon\#\cG(\tilde{R}).
        \end{gather}
        This is a contradiction, as ${S}^*$ is a $(\sigma,\frac{t_{\tilde{R}} }{K})$-rectangle that contains $\tilde{R}$ (up to dilation by a factor of $10$) and $t_{\tilde{R}} $ is the minimal (dyadic) number such that property \eqref{eq:broadness} holds for $\tilde{R}$. Therefore, every $S\in\cS$ is broad and we have
        \begin{gather}
            \#\cS\lesssim t^{-2\varepsilon}\sqrt{\frac{K}{\rho}}\left(\frac{\#\cG}{\nu}\right)^2.
        \end{gather}
         By comparing the lengths of $S\in\cS$ and $\tilde{R}\in\tilde{\cR}_4$, we get
        \begin{equation}\label{eq:length2}
            \#\{\tilde{R}\in\tilde{\mathcal{R}}_4:\ \tilde{R}\subset 10S\}\lesssim\sqrt{\frac{\rho}{t}}.
        \end{equation}
        We combine \eqref{eq:Sbroad} and \eqref{eq:length2} to obtain
        \begin{gather}\label{eq:case3}
            \nonumber \#\tilde{\mathcal{R}}_4=\sum_{S\in\mathcal{S}}\#\{\tilde{R}\in\tilde{\mathcal{R}}_4:\ \tilde{R}\subset 10 S\}\lesssim\sqrt{\frac{\rho}{t}}\#\cS\\
           \lesssim t^{-2\varepsilon}K^{1/2}t^{-1/2}\left(\frac{\#\cG}{\nu}\right)^2.
        \end{gather} 
        As we remarked at the end of Subsection \ref{subsec:coarse}, under the assumption that $\#\cG\gtrsim t^{-1/2}$ we have  
        \begin{gather*}
            t^\varepsilon \nu\lesssim\#\cG(S)\lesssim t^{1/2}\#\cG,
        \end{gather*}
        from the $(\delta,\frac{1}{2})$-broadness hypothesis on $\cG$. By rearranging, we get
        \begin{gather*}
            t^{-1/2}\lesssim t^{-\varepsilon}\frac{\#\cG}{\nu}
        \end{gather*}
        Therefore, \eqref{eq:case3} yields
        \begin{gather*}
            \#\tilde{\cR}_3\lesssim 2^{1/2\varepsilon}\sigma^{-3\varepsilon}\left(\frac{\#\cG}{\nu}\right)^3.
        \end{gather*}
        
    \section{On the Sharpness of the Main Estimates}\label{sec:4}
    \subsection{Sharpness of Planar Curved Kakeya Estimate}
    We begin by showing that the powers of $\delta$ and $\rho$, as well as the $L^p$ exponent $3/4$ in the conclusion (\ref{eq:parabola}) of Theorem \ref{thm:kakeya} cannot be improved (except for potentially an $\varepsilon$-improvement on the power of $\delta$).\\\\
    To see that the powers of $\delta,\rho$ are optimal, let $f(s)=\frac{\rho}{2}s^2,\ g(s)=-\frac{\rho}{2}s^2$ and $\mathcal{F}=\{f\},\mathcal{G}=\{g\}$. Then we have
    \begin{equation*}
        |f(s)-g(s)|\leq\delta\iff\rho |s|^2\leq\delta\iff |s|\leq\sqrt{\delta/\rho},
    \end{equation*}
    and therefore $f^\delta\cap g^\delta$ is a curvilinear rectangle of dimensions $\sim\delta\times\sqrt{\frac{\delta}{\rho}}.$ Thus, the left-hand side of \eqref{eq:parabola} is $\sim\delta^{3/2}\rho^{-1/2}$, which is evidently equal to the right-hand side of \eqref{eq:parabola}.
    \begin{proposition}
        The smallest $L^p$ exponent such that the conclusion \eqref{eq:parabola} of Theorem \ref{thm:kakeya} holds is $3/4$.
    \end{proposition}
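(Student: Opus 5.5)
We need to show that for any $p<3/4$, the estimate
\[
\int (\textstyle\sum_f\chi_{f^\delta})^p(\sum_g\chi_{g^\delta})^p\lesssim_\varepsilon\rho^{-1/2}\delta^{3/2-\varepsilon}\big(\#\cF^{p}\#\cG^{p}+\#\cF+\#\cG\big)
\]
(with the same right-hand side structure, i.e. exponents $p$ on the bilinear term) fails. Equivalently, we exhibit admissible $(\cF,\cG)$ for which the left-hand side is much larger than the right-hand side as $\delta\to0$. It is also worth confirming that no smaller exponent on the $\#$'s can hold in the bilinear term, which is automatic: taking repeated curves, the left side scales like $N^{2p}$ while $\#\cF^{p}\#\cG^{p}=N^{2p}$, so the homogeneity is consistent for every $p$. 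Thus the obstruction must come from a genuinely geometric configuration. I would use the natural ``clustering'' configuration: many curves of each family all passing through a common small region, in a broad way, so that the $\delta$-neighbourhoods overlap heavily on a rectangle.

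\textbf{Construction.} Take $\rho=1$. Let $R_0$ be the $(\delta,1)$-rectangle around $s=0$ of dimensions $\delta\times\sqrt\delta$. Let $\cF$ be a maximal $\delta$-separated set of quadratics $f(s)=\tfrac a2 s^2+bs+c$ with $a\in[1/2,1]$, $|b|\le\sqrt\delta$, $|c|\le\delta$ (so $\#\cF\sim\delta^{-1}\cdot\delta^{-1/2}\cdot 1$, using $\delta$-separation in the coefficients; in fact $\#\cF\sim \delta^{-1}\cdot\delta^{-1/2}\cdot\delta^{0}$, roughly $N\sim\delta^{-3/2}$ after taking $a$-spacing $\delta$ and $b$-spacing $\delta$). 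Similarly let $\cG$ be the analogous family with $a\in[-1,-1/2]$. By Lemma \ref{le:jet} every such curve is tangent to $R_0$, the pair is $1$-bipartite, and every point of a fixed fraction of $R_0$ lies in $\gtrsim N$ neighbourhoods from each family. Hence the left side is $\gtrsim |R_0| N^{2p}=\delta^{3/2}N^{2p}$. Broadness must be checked: a $(\sigma,t)$-rectangle is tangent to at most the curves whose $a$ lies in a $t$-window and $b$ in a $\sqrt{\sigma t}$-window, giving a count $\lesssim 1+t\cdot N$ (the $t$-window in $a$ already costs a factor $t$), so $(\delta,\alpha)$-broadness holds for $\alpha\le1$.

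\textbf{Comparison.} The right side is $\delta^{3/2-\varepsilon}(N^{2p}+2N)$. When $2p<1$ hmm — then the linear terms dominate and the right side is $\sim\delta^{3/2-\varepsilon}N$, while the left side is $\delta^{3/2}N^{2p}$, which is smaller; so this single-cluster example does not break $p<3/4$. The main obstacle is therefore to design a configuration where the incidence count beats the linear terms: I would instead use a Szemerédi--Trotter-type grid of many rich rectangles. Take $M$ disjoint rectangles each tangent to $k$ curves of each family, with each curve tangent to $\sim M k/N$ rectangles; Lemma \ref{le:zahl} is sharp (up to $\varepsilon$) in the regime $M\sim (N^2/k^2)^{3/4}$ (a Wolff-type lattice of parabolas). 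Then the left side is $\sim M\delta^{3/2}k^{2p}$ and the right side is $\delta^{3/2}(N^{2p}+N)$. With $M=N^{3/2}k^{-3/2}$ the ratio is $N^{3/2-2p}k^{2p-3/2}$ against $1+N^{1-2p}$; choosing $k$ as small as allowed ($k\sim N^{1/3}$, the threshold where the $3/4$ term of Lemma \ref{le:zahl} dominates the linear ones) gives left/right $\sim N^{(3/2-2p)\cdot 2/3}/N^{\max(0,1-2p)}$, which tends to infinity precisely when $p<3/4$. The step I expect to be hardest is realizing this extremal lattice concretely with quadratics in two transversal, $\delta$-separated, broad families (rather than abstractly); I would first try integer-coefficient parabolas $\tfrac a2 s^2+bs+c$ with $a,b,c$ on scaled grids, mimicking the standard Szemerédi--Trotter grid construction, and verify richness via Lemma \ref{le:jet}.
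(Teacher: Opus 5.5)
Your proposal has a genuine gap at the one step that carries the argument. Everything rests on two $\delta$-separated, $1$-bipartite, broad families of $N$ quadratics. They must come with $M\sim(N/k)^{3/2}$ pairwise disjoint $(\delta,1)$-rectangles, each tangent to $k\sim N^{1/3}$ curves of each family. In other words, you need Lemma \ref{le:zahl} to be attained. You assert that such a ``Wolff-type lattice'' exists but do not construct it. The paper explicitly says that sharpness of the $3/4$ exponent in Lemma \ref{le:zahl} is an open question, so this cannot be taken for granted.

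Nor is it a routine check via Lemma \ref{le:jet}. Consider the natural grid, with curvature fixed in each family (translates of $\pm\tfrac12 s^2$). Tangency of $\tfrac12s^2+b_1s+c_1$ and $-\tfrac12s^2+b_2s+c_2$ then becomes the point--line incidence $c_1-\tfrac14b_1^2=c_2+\tfrac14b_2^2-\tfrac12b_1b_2$. A Szemer\'edi--Trotter grid then gives only about $N^{4/3}$ tangency rectangles, each merely $(1,1)$-rich, which by itself rules out only $p<2/3$. As written, the proposal does not prove the proposition.

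The missing idea is that no incidence-extremal configuration is needed; you gave up on clustering at the wrong scale. Instead of concentrating everything at one $(\delta,1)$-rectangle, do what the paper does. Let $\cF,\cG$ be maximal $\delta$-separated subsets of two $\rho$-separated $\tau$-balls of radius $\rho\sim 1$, so that $\#\cF\sim\#\cG\sim\delta^{-3}$. The set $\{(a,b,c): |as^2+bs+c-y|\lesssim\delta\}$ is a $\delta$-slab in coefficient space. Hence each point of a fixed portion of $[0,1]^2$ lies in $\sim\delta^{-2}$ neighbourhoods from each family.

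So the left-hand side of \eqref{eq:parabola} with exponent $p$ is $\sim\delta^{-4p}$. For $p\ge 1/2$ the right-hand side is $\sim\delta^{3/2-6p}$, so the ratio $\delta^{2p-3/2}$ blows up for $p<3/4$. Broadness causes no trouble: by Lemma \ref{le:jet} a $(\sigma,t)$-rectangle is tangent to $\lesssim 1+\sigma^{3/2}t^{3/2}\delta^{-3}\lesssim 1+t^{\alpha}\#\cF$ of these curves.

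The mechanism is a pure dimension count, not rich tangencies. The normalisation $\delta^{3/2}$ is the area of a single tangency rectangle, but a full three-parameter family puts $\delta^{-2}$ curves through every point, and these are mostly transverse.

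Two transversal foliations work equally well and also cover $p<1/2$. Take $\{\tfrac12 s^2+j\delta\}$ and $\{-\tfrac12 s^2+j\delta\}$ with $|j|\le(2\delta)^{-1}$; these are broad for every $\alpha\le 1$. The left side is $\sim 1$, while the right side is $\sim\delta^{3/2}\max(\delta^{-2p},\delta^{-1})$.
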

    \begin{proof}
        Let $B_1,\ B_2$ be $\rho$-separated $\tau$-balls of radius $\rho$, and let  $\mathcal{F},\mathcal{G}$ be maximal $\delta$-separated sets contained in $B_1$ and $B_2$ respectively. By construction, $(\mathcal{F},\mathcal{G})$ is a $\rho$-bipartite pair and $\#\mathcal{F}\sim\#\mathcal{G}\sim\Big(\frac{\rho}{\delta}\Big)^3$. For each $p=(s,y)\in[0,1]^2$ and $f(s)=as^2+bs+c$ we have
        \begin{gather*}
            f\in\mathcal{F}(p)\iff|as^2+bs+c-y|\lesssim\delta.
        \end{gather*}
        The above condition defines a slab of thickness $\delta$ and radius $\rho$ in $(a,b,c)$-space. Thus, $m_\mathcal{F}(p)=\sum_{\mathcal{F}}\chi_{f^\delta}(p)\sim(\rho/\delta)^2$ similarly $m_\mathcal{G}(p)\sim(\rho/\delta)^2$, so that 
        $$ \int_{[0,1]^2} \Big( \sum_{f \in \mathcal{F}} \chi_{f^\delta} \Big)^{p} \Big( \sum_{g \in \mathcal{G}} \chi_{g^\delta} \Big)^{p}\sim \delta^{-4p}. $$
        On the other hand, we have
        $$\delta^{3/2}\Big(\#\mathcal{F}^p\#\mathcal{G}^p+\#\mathcal{F}+\#\mathcal{G}\Big)\sim\delta^{3/2}\#\mathcal{F}^p\#\mathcal{G}^p\sim\delta^{3/2-6p}. $$
        Thus, for  \eqref{eq:parabola} to hold, we need $\delta^{-4p}\leq\delta^{3/2-6p},$ which yields $p\geq3/4$.
    \end{proof}
    Next, we investigate what is the minimal exponent $\alpha$ such Theorem \ref{thm:kakeya} holds for $(\delta,\alpha)$-broad families of quadratics.
    \begin{proposition}\label{thm:clam}
    If Theorem \ref{thm:kakeya} holds for some $\alpha>0$, then necessarily $\alpha\geq 1/4$.
    \end{proposition}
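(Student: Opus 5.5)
The plan is to build an explicit ``clamshell'' configuration that is $(\delta,\alpha)$-broad for every $\alpha<1/4$ but violates \eqref{eq:parabola}. This shows that the final step of Subsection \ref{subsec:broadness}, which converts broadness into \eqref{eq:pencil}, cannot work below exponent $1/4$. We take $\rho\sim1$ and a small parameter $t\in[\delta,1]$, and choose $\delta$ so small relative to $t$ that all counts below are realisable by $\delta$-separated families. The configuration consists of $K$ disjoint ``bundles''; in each bundle all the $\mathcal{G}$-curves run together along a long $(\delta,t)$-rectangle and all the $\mathcal{F}$-curves cross it.

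\emph{Construction.} Fix $K$ $(\delta,t)$-rectangles $R_1,\dots,R_K$, each of length $\sqrt{\delta/t}$, placed in $[0,1]^2$ at mutually far-apart locations. For each $k$, let $\mathcal{G}_k$ consist of $\nu$ $\delta$-separated quadratics tangent to $R_k$, all with curvature close to $-1$. Lemma \ref{le:jet} describes these as a box in coefficient space with sides $\delta\times\sqrt{\delta t}\times t$, so we may take $\nu\sim t^{-1}$, or anything smaller. Next, split $R_k$ into $N\sim t^{-1/2}$ disjoint $(\delta,1)$-subrectangles of length $\sqrt\delta$. For each subrectangle choose one $f$ with curvature near $+1$ that is tangent to the central curve of $R_k$ there. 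Since $\tau(f,g)\sim1$ for all the $g\in\mathcal{G}_k$, each such $f$ is tangent to every $g\in\mathcal{G}_k$ along that subrectangle. Put $\mathcal{F}=\bigcup_k\mathcal{F}_k$ and $\mathcal{G}=\bigcup_k\mathcal{G}_k$. Then $(\mathcal{F},\mathcal{G})$ is a $\rho$-bipartite pair with $\#\mathcal{F}=KN$ and $\#\mathcal{G}=K\nu$.

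\emph{Computation.} On each of the $KN$ subrectangles, which have area $\sim\delta^{3/2}$, we have $m_{\mathcal{F}}\gtrsim1$ and $m_{\mathcal{G}}\gtrsim\nu$. Hence the left side of \eqref{eq:parabola} is $\gtrsim KN\nu^{3/4}\delta^{3/2}$, while the right side is $\delta^{3/2-\varepsilon}\big((KN)^{3/4}(K\nu)^{3/4}+KN+K\nu\big)$. The inequality therefore fails, by a power of $t$ and hence of $\delta$ once $t$ is a fixed power of $\delta$, provided three conditions hold:
\begin{itemize}
\item $N^{1/4}\gg K^{1/2}$, that is, $K\ll t^{-1/4}$;
\item $\nu^{3/4}\gg1$;
\item $N\nu^{3/4}\gg\nu$, that is, $\nu\ll t^{-2}$.
\end{itemize}
All three hold with $\nu=t^{-1}$ and $K=t^{-\alpha}$ whenever $\alpha<1/4$.

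\emph{Broadness.} It remains to check that $\mathcal{F}$ and $\mathcal{G}$ are $(\delta,\alpha)$-broad when $K=t^{-\alpha}$; this is the step I expect to be the main obstacle. For $\mathcal{G}$, the worst rectangles are the $R_k$ and their subrectangles. Curves in different bundles are $\gtrsim1$ apart at the scales that matter, so a $(\sigma,s)$-rectangle sees at most one bundle. For $s\le t$ this gives $\#\mathcal{G}(R)\le\nu\lesssim t^{\alpha}\#\mathcal{G}\le s^{\alpha}\#\mathcal{G}$ only if $s\gtrsim t$. For $s<t$, tangency to a longer rectangle forces the curves into a smaller coefficient box, by Lemma \ref{le:jet}. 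I would count the curves of $\mathcal{G}_k$ whose jets lie in the intersection of the two boxes and show the count is $\lesssim s^{\alpha}K\nu$; if this fails for the full box, I would thin $\mathcal{G}_k$ to a subset that is ``uniform'' in the $t$-direction. For $\mathcal{F}$, the curves within a bundle are tangent to distinct, disjoint $(\delta,1)$-pieces of a common curve, so a $(\sigma,s)$-rectangle is tangent to at most about $\sqrt{s^{-1}t}$ of them. Since $\#\mathcal{F}=t^{-1/2-\alpha}$, this is $\lesssim1+s^{\alpha}\#\mathcal{F}$ for $\alpha\le1/2$.

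With both families broad for $\alpha<1/4$ and \eqref{eq:parabola} violated by a power of $\delta$, Theorem \ref{thm:kakeya} cannot hold for any such $\alpha$. This gives the necessity of $\alpha\ge1/4$.
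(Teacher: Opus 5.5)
Your construction is the paper's clamshell example with the roles of $\mathcal{F}$ and $\mathcal{G}$ swapped. The numerology $t^{-\alpha}\lesssim K\ll t^{-1/4}$ is exactly the comparison of \eqref{eq:mu1} with \eqref{eq:mu2}, and you are more careful than the paper in checking that the linear terms do not absorb the left-hand side.

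The step that fails is your broadness check for $\mathcal{F}$. Being tangent to the spine $h_k$ along disjoint $(\delta,1)$-pieces does not stop the $f$'s from being close to one another elsewhere. Suppose the $f$'s of a bundle have the same curvature, for instance exactly $+1$. Then $f_{k,l}-f_{k,l'}$ is linear with slope $2(\theta_{l'}-\theta_l)$, of size at most $\sim\sqrt{\delta/t}$. So all $N=t^{-1/2}$ curves of a bundle stay within $O(\sqrt{\delta/t})$ of each other on a unit interval. A single $(\sigma,\sigma)$-rectangle with $\sigma\sim\sqrt{\delta/t}$, which has length $\sim1$, is then tangent to the whole bundle. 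Condition \eqref{eq:broad} demands $N\lesssim 1+\sigma^{\alpha}KN=1+(\delta/t^{3})^{\alpha/2}N$, i.e.\ $\delta\gtrsim t^{3}$. More generally, Lemma~\ref{le:jet} forces $|\theta_l-\theta_{l'}|\lesssim\sqrt{\sigma s}$ for two curves tangent to the same $(\sigma,s)$-rectangle. The correct per-bundle count is therefore $\min\{N,\sqrt{\sigma s/\delta}+1\}$, not $\sqrt{t/s}$. Your instruction to take $\delta$ very small relative to $t$ thus produces an $\mathcal{F}$ that is \emph{not} $(\delta,\alpha)$-broad.

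The example survives if you work in the window $t^{3}\lesssim\delta\lesssim t^{2}$, say $\delta=t^{2}$, and make three choices explicit. (Alternatively, spreading the curvatures of the $f$'s in a bundle over an interval of length $\sim1$ also cures the $\mathcal{F}$ problem.)
\begin{enumerate}
\item Let $\mathcal{G}_k$ be a pencil: $\nu=t^{-1}$ quadratics sharing the value and slope of $h_k$ at the centre of $R_k$, with curvatures spaced by $t^{2}=\delta$ across an interval of length $t$. Tangency to a $(\sigma,s)$-rectangle pins curvature to within $\lesssim s$, so at most $s/t^{2}+1$ of these curves are tangent to it. Since $s/t^{2}\le s^{\alpha}t^{-1-\alpha}=s^{\alpha}K\nu$ for $s\le t$, this settles the case $s<t$ that you left open.
\item For $\mathcal{F}$ the per-bundle count becomes $\lesssim\min\{t^{-1/2},s/t+1\}$. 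This is $\lesssim1+s^{\alpha}t^{-1/2-\alpha}=1+s^{\alpha}\#\mathcal{F}$ for every $s$.
\item $K\to\infty$ bundles cannot be pairwise $\gtrsim1$ apart inside a $\tau$-ball of radius $\rho$. Instead take the spines $h_k$ to be vertical translates with gaps $\sim t^{\alpha}$. Within a bundle the curves vary by $O(\sqrt t)$ for $\mathcal{F}$ and $O(t)$ for $\mathcal{G}$, both $\ll t^{\alpha}$. Hence a $\sigma$-thick rectangle meets $\lesssim1+\sigma K$ bundles. The extra contribution $\sigma K\nu=\sigma\#\mathcal{G}\le s^{\alpha}\#\mathcal{G}$, and likewise for $\mathcal{F}$, is harmless.
\end{enumerate}
With these choices your computation goes through and gives a ratio $\gtrsim\delta^{\varepsilon}t^{-(1/8-\alpha/2)}\to\infty$, as required.
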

    \begin{proof}
        Let $0<\delta\leq t\leq 1$ and let $\mu\sim t/\delta, \nu\leq\delta^{-1},\mu\leq N\leq\delta^{-1}$ be non-negative integers. Let $\{R_j\}_{j=1}^{N/\mu}$ be a family of pairwise disjoint $(\delta,t)$-rectangles. For each $j\in[1,\frac{N}{\mu}]$ let $\{f_{j,k}\}_{k\in[1,\mu]}$ be a family of quadratic polynomials that are all tangent to $R_j$ and let $\mathcal{F}:=\{f_{j,k}: j,k\}.$ Divide each $R_j$ length into $(\delta,1)$-rectangles $R_{j,l},\ 1\leq l\leq\sqrt{\frac{\delta}{t}}\Big/\sqrt{\delta}=t^{-1/2}$ and for each $j,l$ let $g_{j,l,n},\ 1\leq n\leq\nu$ be  quadratic polynomials which are tangent only to $R_{j,l}$ and such that $\tau(f_{j,k},g_{j,l,n})\geq 1/2$. Define $\mathcal{R}:=\{R_{j,l}\},\ \mathcal{G}=\{g_{j,l,n}\}$.  We have $$\#\mathcal{F}=\mu\frac{N}{\mu}=N,\ \#\mathcal{R}=t^{-1/2}\frac{N}{\mu}=\delta^{-1/2}\frac{N}{\mu^{3/2}},\ \#\mathcal{G}\sim\#\mathcal{R}\nu.$$
    If $f\in\mathcal{F}$ is tangent to some $R_j$, then since $R_{j,l}\subset R_j$, we also have that $f\sim R_{j,l}$. Thus, $$\#\{f\in\mathcal{F}:\ f\sim R_{j,l}\}=\#\{f\in\mathcal{F}:\ f\sim R_{j}\}=\mu,$$
    and
    $$\#\{g\in\mathcal{G}:\ g\sim R_{j,l}\}\sim\nu.$$
    Suppose Theorem \ref{thm:kakeya} holds for all families $\cF,\cG$ that are $(\delta,\alpha)$-broad for some $\alpha>0$. Since by construction the rectangles $R_{j,l}$ are pairwise disjoint and  account for all the tangencies between pairs $(f,g)\in\cF\times\cG$, the left-hand side of \eqref{eq:parabola} is equal to
    \begin{equation*}
        \delta^{3/2}t^{-1/2}\#\cR\mu^{3/4}\nu^{3/4},
    \end{equation*}
    whereas the right-hand side is equal to
    \begin{equation*}
        \delta^{3/2}t^{-1/2}\#\cF^{3/4}\#\cG^{3/4}.
    \end{equation*}
    Therefore, we must have
    \begin{gather}
        \mu^{3/4}\nu^{3/4}\#\mathcal{R}\lesssim\#\mathcal{F}^{3/4}\#\mathcal{G}^{3/4}\sim\#\mathcal{F}^{3/4}\#\mathcal{R}^{3/4}\nu^{3/4},\nonumber \\
        \iff \mu^{3/4}\#\mathcal{R}^{1/4}\lesssim\#\mathcal{F}^{3/4} \nonumber\\
        \iff \mu^3\delta^{-1/2}\frac{N}{\mu^{3/2}}\lesssim N^3 \nonumber\\
        \iff \mu\lesssim\delta^{1/3}N^{4/3}. \label{eq:mu1}
    \end{gather}
    We now apply (\ref{eq:broad}) to $\mathcal{F}$ and $R_{j}$ to get
    \begin{gather}
        \mu\lesssim t^\alpha N=\delta^\alpha\mu^\alpha N \nonumber\\
        \iff \mu\lesssim\delta^{\alpha/(1-\alpha)}N^{1/(1-\alpha)}.\label{eq:mu2}
    \end{gather}
    By comparing \eqref{eq:mu1} and \eqref{eq:mu2}, we conclude that $\alpha$ must be at least $1/4$. Moreover, since \eqref{eq:mu1} is equivalent to \eqref{eq:pencil}, it is also the case that \eqref{eq:pencil} fails unless $\alpha\geq 1/4$ (at least in this particular example).
\end{proof}
\begin{remark}
    It is an interesting question whether a $(\delta,\alpha)$-broadness hypothesis on $\mathcal{F}$ and $\mathcal{G}$ with $\alpha\in[\frac{1}{4},\frac{1}{2}]$ is sufficient to prove Theorem \ref{thm:kakeya}. The exponent $1/2$ seems to be the smallest exponent compatible with the argument presented here.
\end{remark}
    \subsection{Sharpness of the Bilinear Tube Estimate}
    Interestingly, we do not know if the exponent $3/4$ in Theorem \ref{thm:main} is sharp. The best bound we can obtain is the following.
    \begin{proposition}
        Suppose that the conclusion \eqref{eq:main} of Theorem \ref{thm:main} holds for some exponent $p>0$ in place of $3/4$. Then $p\geq 2/3$.
    \end{proposition}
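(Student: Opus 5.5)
The plan is to test \eqref{eq:main}, with exponent $p$ in place of $3/4$, on the \emph{maximal} transversal configuration: for each $j=1,2$, all essentially distinct Heisenberg $\delta$-tubes meeting $B_0$ whose directions lie within $c$ of $e_j$. This is the Heisenberg analogue of the construction used above to show that $3/4$ is sharp in Theorem \ref{thm:kakeya}. The exponent $2/3$ should come out of balancing multiplicity against cardinality.

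\textbf{Construction.} Parametrise horizontal segments meeting $B_0$ by a direction $e$ and a base point $p$ taken in a transversal to $e$. For $j=1,2$, let $\cL_j$ be a maximal family of such segments that is $\delta$-separated in the natural metric on the space of segments, and let $\cT_j$ be the corresponding $\delta$-tubes. Concretely, I would take the directions $\delta$-separated near $e_j$, and for each direction take a maximal family of left-translates whose tubes are essentially distinct.

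\textbf{Step 1 (multiplicities).} For each $x\in B_0$ the tubes of $\cT_j$ through $x$ correspond, up to $O(1)$ overlap, to a $\delta$-net of horizontal directions near $e_j$. Hence I expect $m_j(x):=\sum_{T\in\cT_j}\chi_T(x)\sim\delta^{-1}$ on a set of measure $\sim1$.

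\textbf{Step 2 (cardinalities).} A Heisenberg $\delta$-tube has Lebesgue measure $\sim\delta^{3}$: the Korányi $\delta$-ball has volume $\sim\delta^4$, and a unit segment is covered by $\sim\delta^{-1}$ such balls. Double counting $\int m_j=\#\cT_j\,\delta^3$ then gives $\#\cT_j\sim\delta^{-4}$.

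\textbf{Step 3 (broadness).} I will check that $\cL_1,\cL_2$ are $(\delta,\alpha)$-broad. Because the family is homogeneous, the segments meeting $B_d(z,C\sigma)$ have directions equidistributed in the arc of width $\sim c$. So the proportion with direction in an arc $\Omega$ is $\lesssim|\Omega|\le|\Omega|^\alpha$. Transversality holds by construction.

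\textbf{Step 4 (comparison).} The left-hand side is $\gtrsim\int_{B_0}m_1^pm_2^p\sim\delta^{-2p}$. The right-hand side is $\delta^{4-\varepsilon}\bigl(\delta^{-8p}+\delta^{-4}\bigr)$. Since $p\le 1$ in the relevant range, the bilinear term $\delta^{4-8p-\varepsilon}$ dominates the linear term $\delta^{-\varepsilon}$, so the inequality forces
\[
\delta^{-2p}\lesssim_\varepsilon\delta^{4-8p-\varepsilon}.
\]
Letting $\delta\to0$ gives $6p\ge4-\varepsilon$ for every $\varepsilon>0$, hence $p\ge2/3$. (If $p$ were so small that the linear term dominated, the inequality $\delta^{-2p}\lesssim\delta^{-\varepsilon}$ would still fail for $p>\varepsilon/2$. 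So in either case one gets $p\geq 2/3$.)

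\textbf{Main obstacle.} The delicate step is Step 1 together with its consistency with Step 2. I need to justify that a point lies in $\sim\delta^{-1}$ tubes of a maximal essentially-distinct family, not more. The concern is that nearby parallel translates might also contain $x$. For this I would use left-invariance to move $x$ to $0$. The tubes through $0$ with a fixed direction then essentially coincide, since their cores pass within $\delta$ of $0$. Lemma \ref{le:haus}-type arguments control the overlap between different translates. Verifying broadness uniformly over all scales $\sigma$ is routine by the same homogeneity.
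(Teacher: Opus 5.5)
Your Steps 1, 2 and 4 are correct and are essentially the paper's argument. The paper uses a single direction $e_j$ per family and a parabolic $\delta$-net of left-translates, so the tubes of $\cT_j$ are essentially disjoint and foliate $B_d(0,1)$. Then $\#\cT_j\sim\delta^{-3}$, the left-hand side is $\sim 1$, and the right-hand side is $\delta^{4-\varepsilon}(\delta^{-6p}+\delta^{-3})$. Your family is a union of $N$ such foliations, one per direction. This multiplies both the left-hand side and the bilinear term by $N^{2p}$, so $N$ drops out. Taking $N\sim\delta^{-1}$ only adds a case analysis for the linear term; your claim that ``since $p\le1$ the bilinear term dominates'' is wrong for $p<1/2$, but your parenthetical covers that case.

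Your family is also not maximal in the natural metric. Two horizontal segments through a common point whose directions differ by an angle $\theta$ separate to $d$-distance $\sim\sqrt{\theta}$. So essentially distinct tubes only need $\delta^2$-separated directions, as in Theorem \ref{thm:FPW} and Example \ref{ex:bush}. A truly maximal family has $m_j\sim\delta^{-2}$ and $\#\cT_j\sim\delta^{-5}$, so the count in your last paragraph is off. Because of the cancellation this is harmless.

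The real problem is Step 3, which is false. Definition \ref{def:bush} must hold for every arc $\Omega$, however short, with only an additive $1$ to spare. ``Equidistributed directions'' says nothing about arcs shorter than your direction spacing. Each of your directions is carried by $\sim\delta^{-3}$ parallel translates, and $\sim(\sigma/\delta)^3$ of them meet any $B_d(z,C\sigma)$ in the region. Take $\sigma=1$ and let $\Omega$ be an arc of length $\eta\ll\delta$ around one direction. The left side of the broadness inequality is then $\sim\delta^{-3}$, while the right side is $1+\eta^\alpha\delta^{-4}\ll\delta^{-3}$ once $\eta^\alpha\ll\delta$.

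The paper's proof never checks broadness either, and its one-direction example fails it even for arcs of fixed small length. If you read the proposition as the paper does, you can simply drop Step 3. If you want an example that genuinely satisfies the hypotheses of Theorem \ref{thm:main}, you must break the exact parallelism. For instance, give each translate in a direction class its own direction, perturbed by $\lesssim\delta^2$. By the $\sqrt{\theta}$ estimate this changes each tube only up to constant dilation. Choose the perturbations so that the segments meeting any $B_d(z,C\sigma)$ have evenly spread directions within the class.

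Even then, $\delta$-spaced classes give at best $\alpha=1/2$. An arc of length $\delta^2$ still captures $(\sigma/\delta)^3$ of the $\sim\delta^{-1}(\sigma/\delta)^3$ relevant segments. To reach $\alpha=1$ you should start from $\delta^2$-spaced directions.
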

    \begin{proof}
    Let $P\subseteq[0,1]^2$ be a parabolic $\delta$-net. Define $\cT_j=\{T_{e_j}^\delta(p):\ p\in P\}$. Then $\#\cT_1=\#\cT_2\sim\delta^{-3}$. Moreover, for each $j=1,2$ the tubes $T_j\in\cT_j$ are essentially disjoint and folliate $B_d(0,1)$. Thus,
    \begin{equation*}
        \int\Big(\sum_{T_1\in\cT_1}\chi_{T_1}\Big)^p\Big(\sum_{T_2\in\cT_2}\chi_{T_2}\Big)^p\sim 1.
    \end{equation*}
    On the other hand,
    \begin{equation*}
        \delta^4(\#\cT_1)^{p}(\#\cT_2)^p\sim\delta^{4-6p}.
    \end{equation*}
    Assuming that \eqref{eq:main} holds with exponent $p$, we may compare the two terms to get $p\geq 2/3$.
    \end{proof}
    It is an open question whether the $L^p$ exponent $3/4$ in Lemma  \ref{le:zahl} is sharp. In \cite{Wolff}, Wolff claimed that the exponent $3/4$ is the best that can be obtained with the techniques used in that paper (which rely on results in computational geometry, see \cite{Wolff} for further details). To the author's knowledge, there has not been any improvement explicitly on Wolff's original result. Recently, Maldague and Ortiz proved a better incidence-geometric bound for well-spaced circles in \cite{MO}. While their result is not directly applicable to parabolas, or in the bilinear setting, it suggests that improvements on the exponent $3/4$ might be possible using a different approach. We close this section with a conjecture:
    \begin{conjecture}\label{conj:sharp}
        The sharp $L^p$ exponent in Theorem \ref{thm:main} is $2/3$.
    \end{conjecture}
    Conjecture \ref{conj:sharp} is also supported by the following consideration: If all tubes in $\cT_j$ are chosen to have horizontal direction $e_j$ for $j=1,2$ then the statement of Theorem \ref{thm:main} can be viewed as a ``thickened" version of the Szemer\'edi--Trotter theorem. This can be seen using point-line duality, in the way that the authors of \cite{FOP} did. In fact, if we assume that the tubes in $T_j\in\cT_j$ are pairwise disjoint and that their cores belong in a set of horizontal lines $\cL_j$, then we have
    \begin{gather}\label{eq:ST}
        \int\Big(\sum_{T_1\in\cT_1}\chi_{T_1}\Big)^{2/3}\Big(\sum_{T_2\in\cT_2}\chi_{T_2}\Big)^{2/3}\sim\delta^4\#\cI (\cL_1,\cL_2),
    \end{gather}
    where $\cI(\cL_1,\cL_2)$ denotes the set of incidences between $\cL_1$ and $\cL_2$. By point-line duality and the Szemer\'edi--Trotter theorem, the right-hand side of \eqref{eq:ST} is
    \begin{equation*}
        \lesssim\delta^4\Big(\cT_1^{2/3}\#\cT_2^{2/3}+\#\cT_1+\#\cT_2\Big).
    \end{equation*}
    One can also apply Theorem $1.2$ of \cite{FOP} to bound the number of approximate $\delta$-incidences between $\cL_1$ and $\cL_2$ by $\delta^{-1/3}\#\cL_1^{2/3}\#\cL_2^{2/3}$, which leads to the following bound for \eqref{eq:ST}:
    \begin{equation*}
        \delta^{4-1/3}\#\cT_1^{2/3}\#\cT_2^{2/3}.
    \end{equation*}
    We refer the interested reader to \cite{FOP} for more details on point-line duality in the context of horizontal lines in the Heisenberg group.
\subsection{Concluding Remarks}
It is also natural to ask if these results can be extended to higher dimensions. To the author's knowledge, there are no results about the Heisenberg Kakeya conjecture in higher dimensions. Incidence estimates for curves in dimension $3$ and higher are quite limited and in general seem to be much harder than those in two dimensions. The techniques presented here are essentially limited to two dimensions.

\printbibliography
\end{document}

%% file: preamble.tex
\usepackage[utf8]{inputenc}
\usepackage{bbm}
\usepackage{pdfpages}
\usepackage{tcolorbox}
\usepackage{esint} % for \fint
\usepackage{amsmath,amssymb,amsthm}

\theoremstyle{plain}

\newtheorem{proposition}{Proposition}

\newtheorem{conjecture}{Conjecture}
\newtheorem{example}{Example}

\theoremstyle{definition}

\newtheorem*{first key estimate}{First key estimate}
\newtheorem*{second key estimate}{Second key estimate}

\newcommand{\bbH}{\mathbb{H}}

\newcommand{\bbL}{\mathbb{L}}

\newcommand{\bbS}{\mathbb{S}}
\newcommand{\R}{\mathbb{R}}

\newcommand{\bbW}{\mathbb{W}}

\newcommand{\Z}{\mathbb{Z}}

\newcommand{\cF}{\mathcal{F}}
\newcommand{\cG}{\mathcal{G}}
\newcommand{\cH}{\mathcal{H}}
\newcommand{\cI}{\mathcal{I}}

\newcommand{\cL}{\mathcal{L}}

\newcommand{\cQ}{\mathcal{Q}}
\newcommand{\cR}{\mathcal{R}}
\newcommand{\cS}{\mathcal{S}}
\newcommand{\cT}{\mathcal{T}}

\newcommand{\dist}{\mathrm{dist}}